\newtheorem{theorem}{Theorem}[section]
\newtheorem{lemma}{Lemma}[section]
\newtheorem{defn}{Definition}[section]
\newtheorem{proposition}{Proposition}[section]
\newtheorem{corollary}{Corollary}[section]
\newtheorem{expl}{Example}
\newtheorem{remark}{Remark}
\newtheorem{condition}{Condition}
\theoremstyle{definition}
\newcommand{\RR}{\mathbb{R}}
\renewcommand*{\thanks}[1]{%
  \footnotemark
  \protected@xdef\@thanks{\@thanks
    \protect\footnotetext[\arabic{footnote}]{#1}}%
}
\begin{document}

\title{Kriging Prediction with Isotropic Mat\'ern Correlations: Robustness and Experimental Designs}

\author{Rui Tuo\thanks{Two authors  contributed equally to this work.} \thanks{Tuo's work is supported by NSF grant DMS 1914636.} \\ Department of Industrial and Systems Engineering\\
       Texas A\&M University\\
       \and Wenjia Wang$^*$\\Hong Kong University of Science and Technology\\}

\maketitle

\begin{abstract}
This work investigates the prediction performance of the kriging predictors. We derive some error bounds for the prediction error in terms of non-asymptotic probability under the uniform metric and $L_p$ metrics when the spectral densities of both the true and the imposed correlation functions decay algebraically. The Mat\'ern family is a prominent class of correlation functions of this kind. Our analysis shows that, when the smoothness of the imposed correlation function exceeds that of the true correlation function, the prediction error becomes more sensitive to the space-filling property of the design points. In particular, the kriging predictor can still reach the optimal rate of convergence, if the experimental design scheme is quasi-uniform. Lower bounds of the kriging prediction error are also derived under the uniform metric and $L_p$ metrics. An accurate characterization of this error is obtained, when an oversmoothed correlation function and a space-filling design is used.
\end{abstract}

\textit{keywords}:
  Computer Experiments, Uncertainty Quantification, Scattered Data Approximation, Space-filling Designs, Bayesian Machine Learning

\section{Introduction}

In contemporary mathematical modeling and data analysis, we often face the challenge of reconstructing smooth functions from scattered observations. Gaussian process regression, also known as kriging, is a widely used approach.
The main idea of kriging is to model the underlying function as a realization of a Gaussian process. This probabilistic model assumption endows the reconstructed function with a random distribution. Therefore, unlike the usual interpolation methods, kriging enables uncertainty quantification of the underlying function in terms of its posterior distribution given the data. In spatial statistics and engineering, Gaussian processes are used to reflect the intrinsic randomness of the underlying functions or surfaces \citep{cressie1992statistics,stein2012interpolation,matheron1963principles}. In computer experiments, the Gaussian process models are adopted so that the prediction error under limited input data can be accessed \citep{santner2013design,sacks1989design,bayarri2012framework}. For similar reasons, Gaussian process regression is applied in machine learning \citep{rasmussen2006gaussian} and probabilistic numerics \citep{hennig2015probabilistic}; specifically, in the area of Bayesian optimization, Gaussian process models are imposed and the probabilistic error of the reconstructed function are used to determine the next input point in a sequential optimization scheme for a complex black-box function \citep{shahriari2016taking,frazier2018tutorial,bull2011convergence,klein2016fast}.

Under a Gaussian process model, the conditional distribution of the function value at an untried point given the data is normal, and can be expressed explicitly. In practice, we usually use the curve of conditional expectation as a surrogate model of the underlying function. Despite the known pointwise distributions, many basic properties of the kriging predictive curves remain as open problems. In this work, we focus on three fundamental aspects of kriging: 1) convergence of kriging predictive curves in function spaces; 2) robustness of kriging prediction against misspecification of the correlation functions; 3) effects of the design of experiments. Understanding the above properties of kriging can provide guidelines for choosing suitable correlation functions and experimental designs, which would potentially help the practical use of the method.

In this article, we focus on the isotropic Mat\'ern correlation family. We suppose the underlying function is a random realization of a Gaussian process with an isotropic Mat\'ern correlation function, and we reconstruct this function using kriging with a \textit{misspecified} isotropic Mat\'ern correlation function. We summarize our main results in Section \ref{Sec:results}. In Section \ref{Sec:related}, we make some remarks on related areas and research problems, and discuss the differences between the existing and the present results. In Section \ref{Sec:background}, we state our problem formulation and discuss the required technical conditions. Our main results are presented in Section \ref{sec:mainresults}. A simulation study is reported in Section \ref{sec:simulation}, which assesses our theoretical findings regarding the effects of the experimental designs. Technical proofs are given in Section \ref{sec:proof}.

\subsection{Summary of our results}\label{Sec:results}
We consider the reconstruction of a sample path of a Gaussian process over a compact set $\Omega\subset \mathbb{R}^d$. The shape of $\Omega$ can be rather general, subject to a few regularity conditions presented in Section \ref{Sec:conditions}. Table \ref{tab:results} shows a list of results on the rate of convergence of Gaussian process regression in the $L_p(\Omega)$ norm, with $1\leq p\leq \infty$ under different designs and misspeficied correlation functions. Table \ref{tab:results} covers results on both the upper bounds and the lower bounds. The lower bounds are given in terms of the sample size $n$ and the true smoothness $\nu_0$; and the upper bounds depend also on the imposed smoothness $\nu$, and two space-filling metrics of the design: the fill distance $h_{X,\Omega}$ and the mesh ratio $\rho_{X,\Omega}$. Details of the above notation are described in Section \ref{Sec:conditions}. The variance of the (stationary) Gaussian process at each point is denoted as $\sigma^2$. Recall that we consider interpolation of Gaussian processes only, so there is no extra random error at each observed point given the Gaussian process sample path.

All results in Table \ref{tab:results} are obtained by the present work, except the shaded row which was obtained by our previous work \citep{wang2019prediction}.
Compared to \cite{wang2019prediction}, this work makes significant advances. First, this work establishes the convergence results when an oversmoothed correlation function is used, i.e.,  $\nu>\nu_0$. Specifically, the results in \cite{wang2019prediction} depends only on $h_{X,\Omega}$, and cannot be extended to oversmoothed correlations. In this work, we prove some new approximation results for radial basis functions (see Section \ref{sec:escape}), and establish the theoretical framework for oversmoothed correlations. In the present theory, the upper bounds in oversmoothed cases depend on both $h_{X,\Omega}$ and $\rho_{X,\Omega}$. We also present the bounds under the $L_p(\Omega)$ norms with $1\leq p<\infty$ as well as the lower-bound-type results in this article.

Our findings in Table \ref{tab:results} lead to a remarkable result for the so-called \textit{quasi-uniform sampling} (see Section \ref{Sec:conditions}). We show that under quasi-uniform sampling and oversmoothed correlation functions, the lower and upper rates coincide, which means that the optimal rates are achievable. This result also implies that the prediction performance does not deteriorate largely as an oversmoothed correlation function is imposed, provided that the experimental design scheme is quasi-uniform.

\begin{table}[!h]
\centering
\begin{tabular}{|c|c|c|c|}
\hline
\multicolumn{2}{|c|}{\multirow{2}{*}{Case}} &  \multicolumn{2}{|c|}{Design} \\
\cline{3-4}
       \multicolumn{2}{|c|}{}          & General design & Quasi-uniform design \\
\hline
$\nu\leq \nu_0$, & Upper rate  &  $\sigma h^\nu_{X,\Omega}$ &  $ \sigma n^{-\nu/d}$ \\
\cline{2-4}
$1\leq p<\infty$ & Lower rate  & \multicolumn{2}{|c|}{$\sigma n^{-\nu_0/d}$} \\
\hline
$\nu\leq \nu_0$, & \cellcolor[gray]{0.9}Upper rate   &\cellcolor[gray]{0.9} $\sigma h^\nu_{X,\Omega}\log^{1/2}(1/h_{X,\Omega})$ &  \cellcolor[gray]{0.9}$ \sigma n^{-\nu/d}\sqrt{\log n}$ \\
\cline{2-4}
$p=\infty$ & Lower rate  &  \multicolumn{2}{|c|}{$\sigma n^{-\nu_0/d}\sqrt{\log n}$}  \\
\hline
$\nu > \nu_0$, & Upper rate   &  $\sigma h_{X,\Omega}^{\nu_0}\rho^{\nu-\nu_0}_{X,\Omega}$ & $\sigma n^{-\nu_0/d}$ \\
\cline{2-4}
$1\leq p<\infty$ & Lower rate   & \multicolumn{2}{|c|}{$\sigma n^{-\nu_0/d}$} \\
\hline
$\nu > \nu_0$, & Upper rate   & $\sigma h_{X,\Omega}^{\nu_0}\rho^{\nu-\nu_0}_{X,\Omega} \log^{1/2}(1/h_{X,\Omega})$  & $\sigma n^{-\nu_0/d}\sqrt{\log n}$ \\
\cline{2-4}
$p=\infty$ & Lower rate  & \multicolumn{2}{|c|}{$\sigma n^{-\nu_0/d}\sqrt{\log n}$} \\
\hline
\end{tabular}
\caption{{\rm Summary of the $L_p$ convergence rates for kriging prediction error with isotropic Mat\'ern correlation functions. In addition to the rates of convergence, all kriging prediction errors in Table \ref{tab:results} decay at sub-Gaussian rates. The rates on the shaded row were presented in our previous work \citep{wang2019prediction}. The results for all other cases are obtained in the current work.} }\label{tab:results}
\end{table}

\subsection{Comparison with related areas}\label{Sec:related}
Although the general context of function reconstruction is of interest in a broad range of areas, the particular settings of this work include: 1) \textbf{Random underlying function:} the underlying function is random and follows the law of a Gaussian process; 2) \textbf{Interpolation:} besides the Gaussian process, no random error is present, and therefore an interpolation scheme should be adopted; 3) \textbf{Misspecification:} Gaussian process regression is used to reconstruct the underlying true function, and the imposed Gaussian process may have a misspecified correlation function; 4) \textbf{Scattered inputs:} the input points are fixed, with no particular structure. These features differentiate our objective from the existing areas of function reconstruction. In this section, we summarize the distinctions between the current work and four existing areas: average-case analysis of numerical problems, nonparametric regression, posterior contraction of Gaussian process priors, and scattered data approximation. Despite the differences in the scope, some of the mathematical tools in these areas are used in the present work, including a lower-bound result from the average-case analysis (Lemma \ref{th:lowLp}), and some results from the scattered data approximation (see Section \ref{sec:escape}).

\subsubsection{Average-case analysis of numerical problems}
Among the existing areas, the average-case analysis of numerical problems has the closest model settings compared with ours, where the reconstruction of Gaussian process sample paths is considered. The primary distinction between this area and our work is the objective of the study: the average-case analysis aims at finding the optimal algorithms (which are generally \textit{not} the Gaussian process regression, where a misspecified correlation can be used). In this work, we are interested in the \textit{robustness} of the Gaussian process regression. Besides, the average-case analysis focuses on the optimal designs, while our study also covers general scattered designs.

One specific topic in the average-case analysis focuses on the following quantity,
\begin{align}\label{avge}
    e_p^{\text{avg}}(\phi,N) = \bigg(\int_{F_1} \|f - \phi(Nf)\|_{L_p(\Omega)}^p\mu(df)\bigg)^{1/p},
\end{align}
where $\phi:N(F_1)\rightarrow L_p(\Omega)$ is an algorithm, $Nf = [f(x_1),...,f(x_n)]$ with $x_i\in \Omega$, and $F_1$ is a function space equipped with Gaussian measure $\mu$.
It is worth noting that the results in the present work also imply some results in the form (\ref{avge}), where $\phi$ has to be a kriging algorithm. Specifically, Theorem \ref{coro:lowLp} implies lower bounds of \eqref{avge}, and Corollary \ref{coro:quasirate} shows that these lower bounds can be achieved, which also implies upper bounds of \eqref{avge}.

\textbf{Results on the lower bounds of \eqref{avge}.} For $p=2$, the lower bound was provided by \cite{papageorgiou1990average}; also see Lemma \ref{th:lowLp}. If one further assumes that $\Omega = [0,1]^d$, Proposition VI.8 of \cite{ritter2007average} shows that the error \eqref{avge} has a lower bound with the rate $n^{-\nu_0/d}$. One dimensional problems with correlation functions satisfying the Sacks-Ylvisaker conditions are extensively studied; see \cite{muller1997uniform,ritter2007average,ritter1995multivariate,sacks1966designs,sacks1968designs,sacks1970designs}.

\textbf{Results on the upper bound of \eqref{avge}.} Upper-bound-type results are pursued in average-case analysis under the optimal algorithm $\phi$ and optimal designs of $\{x_1,...,x_n\}$. If $\Omega = [0,1]^d$, \cite{ritter2007average} shows that the rate $n^{-\nu_0/d}$ can be achieved by piecewise polynomial interpolation and specifically chosen designs; see Remark VI.3 of \cite{ritter2007average}, also see page 34 of \cite{novak2006deterministic} and \cite{vvivanov1971}.

For $1\leq p <\infty$ and the Mat\'ern correlation function in one dimension, the error in average case $e_p^{\text{avg}}(\phi,N)$ can achieve the rate $n^{-\nu_0}$ by using piecewise polynomial interpolation; See Proposition IV.36
of \cite{ritter2007average}. For the Mat\'ern correlation function in one dimension, the quantity
\begin{align}\label{avgeinfty}
    e_{L_\infty,p}^{\text{avg}}(\phi,N) = \bigg(\int_{F_1} \|f - \phi(Nf)\|_{L_\infty(\Omega)}^p\mu(df)\bigg)^{1/p},
\end{align}
can achieve the rate $n^{-\nu_0}\sqrt{\log n}$ by using Hermite interpolating splines \citep{seleznjev1999certain} for $1\leq p <\infty$.

Other definitions of the error are also studied in average-case analysis. See \cite{chen2019average,fasshauer2012average,khartov2019asymptotic,lifshits2015approximation,luschgy2004sharp,luschgy2007high} for examples.

\subsubsection{Nonparametric regression and statistical learning}
The problem of interest in nonparameteric regression is to recover a \textit{deterministic} function $f$ under $n$ \textit{noisy} observations $(x_i,y_i),i=1,\ldots,n$, under the model
\begin{eqnarray}
y_i=f(x_i)+\epsilon_i, & i=1,\ldots,n,\label{recovering}
\end{eqnarray}
where $\epsilon_i$'s are the measurement error. Assuming that the function $f$ has smoothness $\nu_0$,\footnote{See Section \ref{sec:escape} for a discussion on the smoothness of a deterministic function.} the optimal (minimax) rate of convergence is  $n^{-\nu_0/(2\nu_0 + d)}$ \citep{stone1982optimal}. A vast literature proposes and discusses methodologies regarding the nonparametric regression model \eqref{recovering}, such as smoothing splines \citep{gu2013smoothing}, kernel ridge regression \citep{geer2000empirical}, local polynomials \citep{Tsybakov2008}, etc. Because of the random noise, the rates for nonparametric regression are slower than those of the present work, as well as those in other interpolation problems.
Some cross-cutting theory and approaches between regression and scattered data approximation are also discussed in the statistical learning literature; see, for example, \cite{cucker2007learning}.

\subsubsection{Posterior contraction of Gaussian process priors}
In this area, the model setting is similar to nonparametric regression, i.e., the underlying function is assumed \textit{deterministic} and the observations are subject to random \textit{noise}. The problem of interest is the posterior contraction of the Gaussian process prior. An incomplete list of papers in this area includes \cite{castillo2008lower,castillo2014bayesian,giordano2019consistency,nickl2017nonparametric,pati2015optimal,vaart2011information,van2008rates,van2016gaussian}. Despite the use of Gaussian process priors, to the best of our knowledge, the theory under this framework does not consider noiseless observations, and no error bounds in terms of the our settings, i.e., the fill and separation distances, are reported in this area.

\subsubsection{Scattered data approximation}\label{sec:SDA}
In the field of scattered data approximation, the goal is to approximate, or often, interpolate a \textit{deterministic} function $f$ with its exact observations $f(x_i),i=1,...,n$, where $x_i$'s are data sites. For function $f$ with smoothness $m$, the $L_p$ convergence rate is $n^{-m/d+(1/2-1/q)_+}$ for $1\leq p\leq \infty$ \citep{wendland2004scattered}, where $a_+$ stands for $\max\{a,0\}$. A sharper characterization of the upper bounds are related to the fill distance and separation distance of the design points. Although this area focuses on a purely deterministic problem, some of the results in this field will serve as the key mathematical tool in this work.

It is worth noting that the existing research in scattered data approximation also covered the circumstances where the underlying function is rougher than the kernel function, so that the function is outside of the reproducing kernel Hilbert space generated by the kernel. See \cite{narcowich2006sobolev} for example. Such results can be interpreted as using ``misspecified'' kernels in interpolating deterministic functions. More discussions are deferred to Section \ref{sec:escape}.

\section{Problem formulation}\label{Sec:background}

In this section we discuss the interpolation method considered in this work, and the required technical conditions.

\subsection{Background}\label{Sec:backgroundsub}
Let $Z(x)$ be an underlying Gaussian process, with $x\in\mathbb{R}^d$.
We suppose $Z(\cdot)$ is a stationary Gaussian process with mean zero. The covariance function of $Z$ is denoted as
\begin{eqnarray*}
\text{Cov}(Z(x),Z(x'))=\sigma^2 \Psi(x-x'),
\end{eqnarray*}
where $\sigma^2$ is the variance, and $\Psi$ is the correlation function, or kernel, satisfying $\Psi(0)=1$. The correlation function $\Psi$ is a symmetric positive semi-definite function on $\mathbb{R}^d$. Since we are interested in interpolation, we require that $Z(\cdot)$ is mean square continuous, or equivalently, $\Psi$ is continuous on $\mathbb{R}^d$. Then it follows from the Bochner's theorem (\citealp[page 208]{gihman1974theory};   \citealp[Theorem 6.6]{wendland2004scattered}) that, there exists a finite nonnegative Borel measure $F_\Psi$ on $\mathbb{R}^d$, such that
\begin{eqnarray}\label{Bochner}
\Psi(x)=\int_{\mathbb{R}^d} e^{i \omega^T x}  F_\Psi(d \omega).
\end{eqnarray}
In particular, we are interested in the case where $\Psi$ is also positive definite and integrable on $\mathbb{R}^d$. In this case, it can be proven that $F_\Psi$ has a density with respect to the Lebesgue measure. See Theorem 6.11 of \cite{wendland2004scattered}. The density of $F_\Psi$, denoted as $f_\Psi$, is known as the \textit{spectral density} of $Z$ or $\Psi$.

In this work, we suppose that $f_\Psi$ decays algebraically. A prominent class of correlation functions of this type is the isotropic Mat\'ern correlation family \citep{santner2013design,stein2012interpolation}, given by \begin{eqnarray}\label{matern}
\Psi( x;\nu,\phi)=
\frac{1}{\Gamma(\nu)2^{\nu-1}}(2\sqrt{\nu}\phi \| x\|)^\nu K_\nu(2\sqrt{\nu}\phi\| x\|),
\end{eqnarray}
with the spectral density \citep{tuo2015theoretical}
\begin{eqnarray}\label{maternspectral}
f_\Psi(\omega;\nu,\phi)= \pi^{-d/2}\frac{\Gamma(\nu+d/2)}{\Gamma(\nu)}(4\nu \phi^2)^\nu (4\nu\phi^2+\|{\omega}\|^2)^{-(\nu+d/2)},
\end{eqnarray}
where $\phi,\nu>0$, $K_\nu$ is the modified Bessel function of the second kind and $\|\cdot\|$ denotes the Euclidean metric. It is worth noting that (\ref{maternspectral}) is bounded above and below by $(1+\|\omega\|^2)^{-(\nu+d/2)}$ multiplied by two constants, respectively. The parameter $\nu$ for the Mat\'ern kernels is called the \textit{smoothness} parameter, as it governs the smoothness (or differentiability) of the Gaussian processes. Further discussions are deferred to Section \ref{sec:escape}.

Another example of correlation functions with algebraically decayed spectral densities is the generalized Wendland correlation function \citep{wendland2004scattered,gneitingstationary,chernih2014closed, bevilacqua2019estimation,fasshauer2015kernel}, defined as
\begin{align*}
    \Psi_{GW}(x) = \left\{
    \begin{array}{lc}
         \frac{1}{B(2\kappa,\mu+1)}\int_{\|\phi x\|}^1 u(u^2-\|\phi x\|^2)^{\kappa - 1}(1-u)^\mu du, & 0\leq \|x\|<\frac{1}{\phi}, \\
         0, & \|x\|\geq \frac{1}{\phi},
    \end{array}\right.
\end{align*}
where $\phi,\kappa > 0$ and $\mu \geq (d+1)/2 + \kappa$, and $B$ denotes the beta function. See Theorem 1 of \cite{bevilacqua2019estimation}.

Now we consider the interpolation problem. Suppose we have a scattered set of points $X=\{x_1,\ldots,x_n\}\subset\Omega$. Here the set $\Omega$ is the region of interest, which is a subset of $\mathbb{R}^d$. The goal of kriging is to recover $Z(x)$ given the observed data $Z(x_1),\ldots,Z(x_n)$. A standard predictor is the best linear predictor \citep{santner2013design,stein2012interpolation}, given by the conditional expectation of $Z(x)$ on $Z(x_1),\ldots,Z(x_n)$, as
\begin{eqnarray}\label{BLP}
\mathbb{E}[Z(x)|Z(x_1),\ldots,Z(x_n)]=r_\Psi^T(x) K_\Psi^{-1} Y,
\end{eqnarray}
where $r_\Psi(x)=(\Psi(x-x_1),\ldots,\Psi(x-x_n))^T, K_\Psi=(\Psi(x_j-x_k))_{j k}$ and $Y=(Z(x_1),\ldots,$ $Z(x_n))^T$.

The best linear predictor in (\ref{BLP}) depends on the correlation function $\Psi$. However, in practice $\Psi$ is commonly unknown. Thus, we may inevitably use a misspecified correlation function, denoted by $\Phi$. Suppose that $\Phi$ has a spectral density $f_\Phi$.
We also suppose that $f_\Phi$ decays algebraically, but the decay rate of $f_\Phi$ can differ from that of $f_\Psi$.

We consider the predictor given by the right-hand side of (\ref{BLP}), in which the true correlation function $\Psi$ is replaced by the misspecified correlation function $\Phi$.
Clearly, such a predictor is no longer the best linear predictor. Nevertheless, it still defines an interpolant, denoted by
\begin{eqnarray}\label{interpolant}
\mathcal{I}_{\Phi,X}Z(x)=r^T_\Phi(x) K_\Phi^{-1} Y,
\end{eqnarray}
where $r_\Phi(x)=(\Phi(x-x_1),\ldots,\Phi(x-x_n))^T, K_\Phi=(\Phi(x_j-x_k))_{j k}$ and $Y=(Z(x_1),\ldots,$ $Z(x_n))^T$. In (\ref{interpolant}), $\mathcal{I}_{\Phi,X}$ denotes the interpolation operator given by the kriging predictor, which can be applied not only to a Gaussian process, but also to a deterministic function in the same vein.

\subsection{Notation and conditions} \label{Sec:conditions}
We do not assume any particular structure of the design points $X=\{x_1,\ldots,x_n\}$. Our error estimate for the kriging predictor will depend on two dispersion indices of the design points.

The first one is the fill distance, defined as
$$h_{X,\Omega}:= \sup_{x\in\Omega}\inf_{x_j\in X}\|x-x_j\|.$$
The second is the separation radius, given by
$$ q_X:=\min_{1\leq j\neq k\leq n}\|x_j-x_k\|/2. $$
It is easy to check that $h_{X,\Omega}\geq q_X$ \citep{wendland2004scattered}.
Define the mesh ratio $\rho_{X,\Omega}:=h_{X,\Omega}/q_X\geq 1$.
Because we are only interested in the prediction error when the design points are sufficiently dense, for notational simplicity, we assume that $h_{X,\Omega}<1$. In the rest of this paper, we use the following conventions. For two positive sequences $a_n$ and $b_n$, we write $a_n\asymp b_n$ if, for some constants $C,C'>0$, $C\leq a_n/b_n \leq C'$ for all $n$, and write $a_n\gtrsim b_n$ if $a_n\geq Cb_n$ for some constant $C>0$. Let card$(X)$ denote the cardinality of set $X$.

In this work, we consider both the non-asymptotic case, i.e., the design $X$ is fixed, and the asymptotic case, i.e., the number of design points increases to infinity. To state the asymptotic results, suppose we have a sequence of designs with increasing number of points, denoted by $\mathcal{X}=\{X_1,X_2,\ldots\}$. We regard $\mathcal{X}$ as a \textit{sampling scheme} which generates a sequence of designs, for instance, a design sequence generated by random sampling or maximin Latin hypercube designs.

Without loss of generality, assume that ${\rm card}(X_n)=n$, where $n$ takes its value in an infinite subset of $\mathbb{N}$. This assumption enables direct comparison between our upper and lower bounds. Given the sampling scheme $\mathcal{X}$, we denote $h_n:=h_{X_n,\Omega},q_n:=q_{X_n}$ and $\rho_n=h_n/q_n$. For any sampling scheme, it can be shown that $h_n \gtrsim  n^{-1/d}$ and $q_n\lesssim n^{-1/d}$  \citep{borodachov2007asymptotics,joseph2015sequential}. In fact, it is possible to have $h_n\asymp q_n\asymp n^{-1/d}$, if and only if $\rho_n$ is uniformly bounded above by a constant \citep{muller2009komplexitat}.

\begin{defn}
A sampling scheme $\mathcal{X}$ is said quasi-uniform if there exists a constant $C>0$ such that $\rho_n\leq C$ for all $n$.
\end{defn}

It is not hard to find a quasi-uniform sampling scheme. For example,
a hypercube grid sampling in $\Omega=[0,1]^d$ is quasi-uniform because $\rho_n=\sqrt{d}$ is a constant \citep{wendland2004scattered}. However, random samplings do not belong to the quasi-uniform class; see Example \ref{examplerdsample} in Section \ref{sec:example}.

We assume the Conditions \ref{C4}-\ref{C3} throughout this article.

\begin{defn}\label{def:icc}
	A set $\Omega\subset{\mathbb{R}^d}$ is said to satisfy an interior cone condition if there exists an angle $\alpha\in (0,\pi/2)$ and a radius $R>0$ such that for every $x\in\Omega$, a unit vector $\xi(x)$ exists such that the cone
	$$C(x,\xi(x),\alpha,R):=\left\{x+\lambda y:y\in\mathbb{R}^d,\|y\|=1,y^T\xi(x)\geq \cos\alpha,\lambda\in[0,R]\right\} $$
	is contained in $\Omega$.
\end{defn}

\begin{condition}\label{C4}
	The experimental region	$\Omega\subset \mathbb{R}^d$ is a compact set with Lipschitz boundary and satisfies an interior cone condition.
\end{condition}

\begin{condition}\label{C1}
	There exist constants $c_2 \geq c_1>0$ and $\nu_0>0$ such that, for all $\omega\in\mathbb{R}^d$,
	$$ c_1(1+\|\omega\|^2)^{-(\nu_0+d/2)} \leq  f_\Psi(\omega)\leq c_2(1+\|\omega\|^2)^{-(\nu_0+d/2)}. $$
	\end{condition}

\begin{condition}\label{C3}
	There exist constants $c_4\geq c_3>0$ and $\nu>0$ such that, for all $\omega\in\mathbb{R}^d$,
	$$ c_3(1+\|\omega\|^2)^{-(\nu+d/2)}\leq f_\Phi(\omega)\leq c_4(1+\|\omega\|^2)^{-(\nu+d/2)}. $$
\end{condition}

Condition \ref{C4} is a geometric condition on the experimental region $\Omega$, which holds in most practical situations, because the commonly encountered experimental regions, like the rectangles or balls, satisfy interior cone conditions. Figure \ref{figint} (page 258 of \cite{roy2001inverse}) is an illustration of the $\alpha$-interior cone condition.

\begin{figure}[h!]
    \centering
    \includegraphics[height=2in]{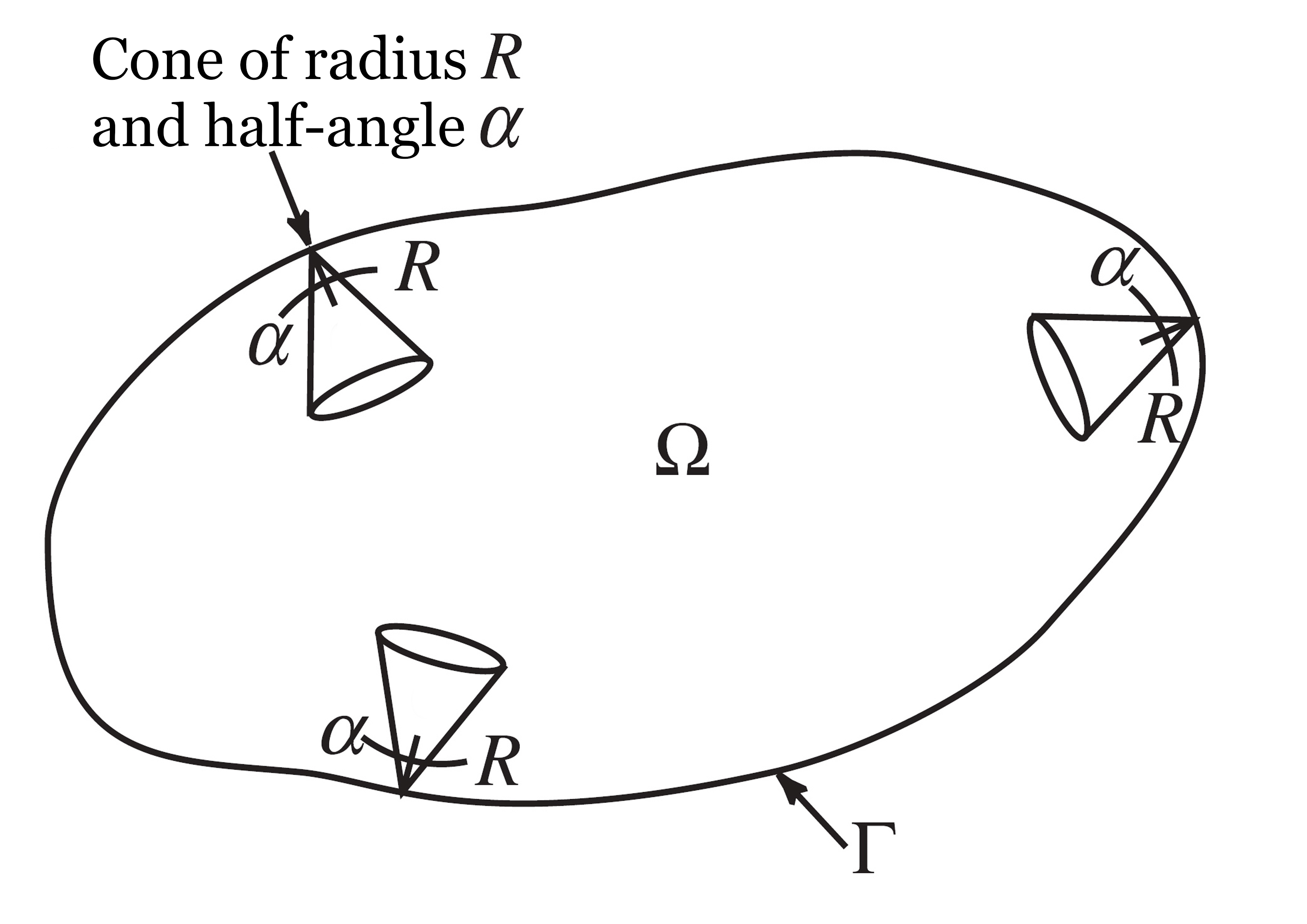}
    \caption{{\rm An illustration of an interior cone condition (page 258 of \cite{roy2001inverse}).}}\label{figint}
\end{figure}

Conditions \ref{C1} and \ref{C3} require that the spectral densities decay in an algebraic order. For example, if $\Psi$ and $\Phi$ are Mat\'ern correlation functions with smoothness parameter $\nu_0$ and $\nu$, respectively, they satisfy Conditions \ref{C1} and \ref{C3}. The decay rates in Conditions \ref{C1} and \ref{C3} determine the smoothness of the correlation function $\Psi$ and $\Phi$; see Section \ref{sec:escape} for the discussion of the relation between the smoothness of the correlation functions and the smoothness of the sample path of a Gaussian process.

\section{Main results}\label{sec:mainresults} In this section, we present our main theoretical results on the prediction error of kriging.

\subsection{Upper and lower bounds of the uniform kriging prediction error}\label{sec:mainLinfty}

This work aims at studying the prediction error of the kriging algorithm (\ref{interpolant}), i.e., $|Z(x)-\mathcal{I}_{\Phi,X}Z(x)|$. In this subsection, we consider the prediction error of the kriging algorithm (\ref{interpolant}) under a uniform metric, given by
\begin{eqnarray}\label{krigingerror}
\sup_{x\in\Omega}|Z(x)-\mathcal{I}_{\Phi,X}Z(x)|,
\end{eqnarray}
which was considered previously in \cite{wang2019prediction}. Under Conditions \ref{C4}-\ref{C3}, they derived an upper bound of (\ref{krigingerror}) under the case $\nu\leq \nu_0$. This result is shown in Theorem \ref{Th:wtw} for the completeness of work. Here we are interested in the case $\nu>\nu_0$, that is, the imposed correlation function is smoother than the true correlation function. In Theorem \ref{Th:main}, we provide an upper bound of the prediction error for $\nu>\nu_0$. In addition to the upper bounds, we obtain a lower bound of the uniform kriging prediction error in Theorem \ref{th:low}.

\begin{theorem}\label{Th:wtw}
Suppose Conditions \ref{C4}-\ref{C3} hold and $\nu\leq\nu_0$. Then there exist constants $C_1,C_2>0, C_3>e$ and $h_0\in(0,1]$, such that for any design $X$ with $h_{X,\Omega}\leq h_0$ and any $t>0$, with probability at least $1-\exp\{-t^2/(C_1\sigma^2h^{2\nu}_{X,\Omega})\}$,\footnote{In \cite{wang2019prediction}, this probability is $1-2\exp\{-t^2/(C_1\sigma^2h^{2\nu}_{X,\Omega})\}$. The constant two can be removed by applying a different version of the Borell-TIS inequality given by Lemma \ref{concentrationGP} in Section \ref{sec:proofLinfty}.} the kriging prediction error has the upper bound
\begin{eqnarray*}
	\sup_{x\in\Omega}|Z(x)-\mathcal{I}_{\Phi,X}Z(x)|\leq C_2\sigma h^\nu_{X,\Omega}\log^{1/2}\left(C_3/h_{X,\Omega}\right)+t.
\end{eqnarray*}
Here the constants $C_1,C_2,C_3$ depend only on $\Omega,\Phi$, and $\Psi$, including $\nu$ and $\nu_0$.
\end{theorem}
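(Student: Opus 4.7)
The plan is to adapt the argument of Wang and Tuo (2019), with the only genuine novelty being the use of a sharper one-sided Borell-TIS inequality (Lemma \ref{concentrationGP} in the footnote) that removes the factor of two in the probability bound. The argument splits naturally into three steps: (i) a uniform bound on the pointwise variance of the prediction error, (ii) a bound on the expectation of its supremum that generates the logarithmic factor, and (iii) a Gaussian tail inequality that converts (i) and (ii) into a concentration statement.

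First I would set $W(x) := Z(x) - \mathcal{I}_{\Phi,X}Z(x)$. Because $\mathcal{I}_{\Phi,X}Z$ is a deterministic linear functional of $(Z(x_1),\ldots,Z(x_n))$ by \eqref{interpolant}, $W$ is a centered Gaussian process on $\Omega$. Writing $W(x) = Z(x) - v(x)^T Y$ with $v(x) = K_\Phi^{-1} r_\Phi(x)$ and applying Bochner's theorem \eqref{Bochner} to $\Psi$, one obtains
$$\mathrm{Var}(W(x)) = \sigma^2 \int_{\mathbb{R}^d} \Big| e^{i\omega^T x} - \sum_{j=1}^n v_j(x)\, e^{i\omega^T x_j} \Big|^2 f_\Psi(\omega)\, d\omega.$$
Under Conditions \ref{C1} and \ref{C3} with $\nu\leq\nu_0$, the spectral ratio satisfies $f_\Psi(\omega)/f_\Phi(\omega) \lesssim (1+\|\omega\|^2)^{\nu-\nu_0}$, which is uniformly bounded. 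Hence the displayed integral is dominated by a constant multiple of the analogous integral with $f_\Phi$ in place of $f_\Psi$, and the latter is (proportional to) the squared power function of the $\Phi$-kriging interpolant at $x$. Standard scattered data approximation estimates then give $\sup_{x\in\Omega}\mathrm{Var}(W(x)) \leq C_1\sigma^2 h_{X,\Omega}^{2\nu}$ once $h_{X,\Omega}\leq h_0$.

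Next I would control $\mathbb{E}\sup_{x\in\Omega}|W(x)|$ by a chaining argument. The canonical pseudo-metric $d_W(x,x') := \big(\mathbb{E}|W(x)-W(x')|^2\big)^{1/2}$ can be bounded by rerunning the same spectral representation for the increment and using the H\"older continuity of $\Psi$; Dudley's entropy integral on the compact $d$-dimensional set $\Omega$ then produces $\mathbb{E}\sup_{x\in\Omega}|W(x)| \leq C_2 \sigma h_{X,\Omega}^\nu \log^{1/2}(C_3/h_{X,\Omega})$, the logarithm being the usual cost of upgrading a pointwise estimate to a uniform one. Finally, Lemma \ref{concentrationGP} yields
$$P\Big(\sup_{x\in\Omega}|W(x)| - \mathbb{E}\sup_{x\in\Omega}|W(x)| > t\Big) \leq \exp\Big(-\tfrac{t^2}{2\sup_{x\in\Omega}\mathrm{Var}(W(x))}\Big),$$
and combining the three ingredients delivers the stated bound.

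The main obstacle is step (i): the weights $v(x)$ depend on both $x$ and the design $X$ in an intricate way, so the passage from $\mathrm{Var}(W(x))$ to the $\Phi$-power function hinges on the spectral domination $f_\Psi \lesssim f_\Phi$, which is precisely what the undersmoothed regime $\nu\leq\nu_0$ affords. When $\nu>\nu_0$ this domination fails and the mesh ratio $\rho_{X,\Omega}$ must enter the bound, as carried out separately in Theorem \ref{Th:main}.
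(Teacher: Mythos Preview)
Your three-step outline (pointwise variance bound via spectral domination $f_\Psi\lesssim f_\Phi$, Dudley entropy bound for the expected supremum, one-sided Borell--TIS for concentration) is correct and is precisely the structure of the proof in \cite{wang2019prediction} that the paper is citing; the only modification, as you note, is the sharper concentration inequality of Lemma~\ref{concentrationGP}.

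One remark on presentation: in the paper's current framework the spectral argument you give for step~(i) is recast in RKHS language via the quasi-power function identity $\mathrm{Var}(W(x))=\sup_{\|f\|_\Psi\le1}|f(x)-\mathcal I_{\Phi,X}f(x)|^2$ (Lemma~\ref{lemGPRKHS} and \eqref{quasipower}), after which the bound $Q(x)\le C h_{X,\Omega}^\nu$ follows from the Sobolev embedding $\mathcal N_\Psi=W_2^{\nu_0+d/2}\hookrightarrow W_2^{\nu+d/2}=\mathcal N_\Phi$ together with the sampling inequality \eqref{samplingineq} (this is Lemma~\ref{th:Qunder}). Your spectral domination $f_\Psi\lesssim f_\Phi$ is exactly the Fourier-side statement of that embedding, so the two arguments are equivalent. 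The RKHS route is slightly cleaner for step~(ii), because the natural distance $d_W(x,x')$ is bounded directly by $\sup_{\|f\|_\Psi\le1}\|f-\mathcal I_{\Phi,X}f\|_{C_b^{0,\tau}}\,\|x-x'\|^\tau$, which reduces to a Sobolev-norm bound on $f-\mathcal I_{\Phi,X}f$; your phrase ``H\"older continuity of $\Psi$'' hides that one also needs control on the interpolant part, and the RKHS formulation handles this automatically.
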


\begin{theorem}\label{Th:main}
	Suppose Conditions \ref{C4}-\ref{C3} hold and $\nu>\nu_0$. Then there exist constants $C_1,C_2>0, C_3>e$ and $h_0\in(0,1]$, such that for any design $X$ with $h_{X,\Omega}\leq h_0$ and any $t>0$, with probability at least $1-\exp\{-t^2/(C_1\sigma^2 h_{X,\Omega}^{2\nu_0}\rho^{2(\nu-\nu_0)}_{X,\Omega})\}$, the kriging prediction error has the upper bound
	$$ \sup_{x\in\Omega}|Z(x)-\mathcal{I}_{\Phi,X}Z(x)|\leq C_2 \sigma h_{X,\Omega}^{\nu_0}\rho^{\nu-\nu_0}_{X,\Omega} \log^{1/2}(C_3/h_{X,\Omega})+t. $$
	Here the constants $C_1,C_2,C_3$ depend only on $\Omega,\Phi$, and $\Psi$, including $\nu$ and $\nu_0$.
\end{theorem}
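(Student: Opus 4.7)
The plan is to apply the Borell-TIS inequality to the centered Gaussian error process
$$W(x):=Z(x)-\mathcal{I}_{\Phi,X}Z(x),\qquad x\in\Omega.$$
Using the sharpened version stated in Lemma \ref{concentrationGP} (already invoked in the footnote of Theorem \ref{Th:wtw}),
$$P\!\left(\sup_{x\in\Omega}|W(x)|>\mathbb{E}\sup_{x\in\Omega}|W(x)|+t\right)\leq \exp\!\left(-\frac{t^2}{2\sigma_W^2}\right),\qquad \sigma_W^2:=\sup_{x\in\Omega}\mathrm{Var}(W(x)).$$
The theorem therefore reduces to two deterministic-looking estimates: \textbf{(i)} $\sigma_W\leq C\sigma h_{X,\Omega}^{\nu_0}\rho_{X,\Omega}^{\nu-\nu_0}$, and \textbf{(ii)} $\mathbb{E}\sup_{x\in\Omega}|W(x)|\leq C\sigma h_{X,\Omega}^{\nu_0}\rho_{X,\Omega}^{\nu-\nu_0}\log^{1/2}(C_3/h_{X,\Omega})$, after which renaming constants gives the claimed sub-Gaussian tail.

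For (i) I would use RKHS duality. Writing $u_j(x):=[K_\Phi^{-1}r_\Phi(x)]_j$ and $w_x(\cdot):=\sum_j u_j(x)\Psi(\cdot-x_j)$, direct expansion gives
$$\mathrm{Var}(W(x))/\sigma^2=\|\Psi(\cdot-x)-w_x\|_{\mathcal{N}_\Psi}^2,$$
where $\mathcal{N}_\Psi$ is the native space (RKHS) of $\Psi$. The reproducing property yields $\langle w_x,f\rangle_{\mathcal{N}_\Psi}=\sum_j u_j(x)f(x_j)=\mathcal{I}_{\Phi,X}f(x)$ for every $f\in\mathcal{N}_\Psi$, so Riesz duality rewrites the pointwise variance as
$$\sqrt{\mathrm{Var}(W(x))}/\sigma=\sup_{\|f\|_{\mathcal{N}_\Psi}\leq 1}|f(x)-\mathcal{I}_{\Phi,X}f(x)|.$$
By Conditions \ref{C1}, \ref{C3} and \ref{C4}, $\mathcal{N}_\Psi$ is norm-equivalent to $H^{\nu_0+d/2}(\Omega)$ and $\mathcal{N}_\Phi$ to $H^{\nu+d/2}(\Omega)$. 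Since $\nu>\nu_0$, any $f\in\mathcal{N}_\Psi$ is rougher than elements of $\mathcal{N}_\Phi$, and the Narcowich-Ward-Wendland ``escape'' estimate developed in Section \ref{sec:escape} gives
$$\|f-\mathcal{I}_{\Phi,X}f\|_{L_\infty(\Omega)}\leq C\,h_{X,\Omega}^{\nu_0}\rho_{X,\Omega}^{\nu-\nu_0}\|f\|_{H^{\nu_0+d/2}(\Omega)},$$
which delivers (i).

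For (ii) I would discretize $\Omega$ at scale $h_{X,\Omega}$, producing $N=O(h_{X,\Omega}^{-d})$ representative points $\tilde x_1,\ldots,\tilde x_N$. A standard maximal inequality for centered Gaussians then yields $\mathbb{E}\max_{i\leq N}|W(\tilde x_i)|\lesssim \sigma_W\sqrt{\log N}\asymp \sigma_W\log^{1/2}(C_3/h_{X,\Omega})$. The residue $\sup_{x\in\Omega}|W(x)|-\max_i|W(\tilde x_i)|$ is handled by chaining with the intrinsic pseudo-metric $d_W(x,y)=\sqrt{\mathrm{Var}(W(x)-W(y))}$; an RKHS duality argument parallel to (i) gives the increments a H\"older-type modulus in $\|x-y\|$, so that the continuity contribution is a lower-order term absorbed into the main bound. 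Combining (i), (ii), and Borell-TIS completes the proof.

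The principal obstacle is the escape estimate that powers (i). In the undersmoothed regime $\nu\leq\nu_0$ treated by Theorem \ref{Th:wtw}, the inclusion $\mathcal{N}_\Psi\subseteq\mathcal{N}_\Phi$ together with the orthogonal-projection structure of $\mathcal{I}_{\Phi,X}$ inside $\mathcal{N}_\Phi$ yields a clean bound depending only on $h_{X,\Omega}$. When $\nu>\nu_0$ the target function leaves $\mathcal{N}_\Phi$, and extracting the sharp $\rho_{X,\Omega}^{\nu-\nu_0}$ misspecification loss requires band-limited approximation of $f$ combined with a Bernstein-type inverse inequality in $\mathcal{N}_\Phi$ on a domain satisfying only the interior cone condition of Condition \ref{C4}. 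This is precisely the new scattered-data-approximation input supplied by Section \ref{sec:escape}, and it is the main technical hurdle of the argument.
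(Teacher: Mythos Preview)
Your outline is essentially the paper's proof: Borell--TIS (Lemma~\ref{concentrationGP}) reduces the problem to bounding $\sigma_W$ and $\mathbb{E}\sup_{x}|W(x)|$; the former is exactly your step~(i), which the paper states as Lemma~\ref{th:Q} (a direct consequence of the escape estimate Lemma~\ref{th:escape} plus the RKHS duality of Lemma~\ref{lemGPRKHS}); the latter is handled by Dudley's entropy integral (Lemma~\ref{lem:GPsupnorm}) after converting $d_W$ to a H\"older modulus in the Euclidean distance.

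One point to sharpen in your sketch of~(ii): the H\"older bound on $d_W(x,x')$ is \emph{not} ``parallel to~(i)'' in the sense of reusing the $L_\infty$ escape estimate. What is needed is a Sobolev-norm stability bound
\[
\sup_{\|f\|_{\mathcal{N}_\Psi}\leq 1}\|f-\mathcal{I}_{\Phi,X}f\|_{\mathcal{N}_\Psi}\ \lesssim\ q_X^{-(\nu-\nu_0)},
\]
after which the Sobolev embedding $W_2^{\nu_0+d/2}\hookrightarrow C_b^{0,\tau}$ with $\tau=\min(\nu_0,1)$ yields $d_W(x,x')\lesssim q_X^{-(\nu-\nu_0)}\|x-x'\|^\tau$. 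The paper obtains this stability bound in its Step~2 by a \emph{second} application of the band-limited approximation (Lemma~\ref{lem:bandlimited}), splitting $f-\mathcal{I}_{\Phi,X}f$ through $f_\gamma$ and using $\|f_\gamma-\mathcal{I}_{\Phi,X}f_\gamma\|_\Phi\leq\|f_\gamma\|_\Phi\lesssim q_X^{-(\nu-\nu_0)}\|f\|_{\mathcal{N}_\Psi}$. With this modulus the $d_W$-diameter of a Euclidean $h_{X,\Omega}$-cell is comparable to $\sigma_W$ itself, so the residue in your two-stage split is of the \emph{same} order as the discrete maximum, not lower order; the paper avoids this bookkeeping by running the full Dudley integral from~$0$ to~$D$ in one pass, but either route gives the stated bound.
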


\begin{theorem}\label{th:low}
Suppose Conditions \ref{C4}-\ref{C3} hold. Then there exist constants $C_1,C_2>0$, such that for any design $X$ satisfying ${\rm card}(X)=n$ and any $t>0$, with probability at least $1-\exp\{-t^2/(2C_1\sigma^2 A)\}$, the kriging prediction error has the lower bound
\begin{eqnarray*}
\sup_{ x\in\Omega}|Z( x)-\mathcal{I}_{\Phi,X}Z( x)|\geq C_2\sigma n^{-\frac{\nu_0}{d}}\sqrt{\log n} - t,
\end{eqnarray*}
where $A = h^{2\nu}_{X,\Omega}$ if $\nu \leq \nu_0$, and $A =h_{X,\Omega}^{2\nu_0}\rho^{2(\nu-\nu_0)}_{X,\Omega}$ if $\nu > \nu_0$. Here the constants $C_1,C_2>0$ depend only on $\Omega,\Phi$, and $\Psi$, including $\nu$ and $\nu_0$.
\end{theorem}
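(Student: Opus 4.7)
Let $G(x) := Z(x) - \mathcal{I}_{\Phi,X}Z(x)$. My plan is first to reduce the probabilistic lower bound to a deterministic one via Gaussian concentration. Since $G$ is a centered Gaussian process on $\Omega$, the Borell--TIS inequality yields
\[
P\Bigl(\sup_{x\in\Omega}|G(x)|<\mathbb{E}\sup_{x\in\Omega}|G(x)|-t\Bigr)\le\exp\!\left(-\frac{t^2}{2\sigma_*^2}\right),
\]
where $\sigma_*^2=\sup_{x\in\Omega}\operatorname{Var}(G(x))$. The pointwise specialization of the upper bounds of Theorems \ref{Th:wtw} and \ref{Th:main} (applied to a single evaluation point, or equivalently using standard kriging-variance estimates) yields $\sigma_*^2\le C_1\sigma^2 A$, matching the sub-Gaussian parameter in the statement. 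The problem then reduces to the deterministic estimate $\mathbb{E}\sup_x|G(x)|\ge C_2\sigma n^{-\nu_0/d}\sqrt{\log n}$.

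For this estimate, I would select $m\asymp n$ test points $\{y_1,\ldots,y_m\}\subset\Omega$ that are pairwise separated by $\gtrsim n^{-1/d}$ and each at distance $\gtrsim n^{-1/d}$ from the design $X$. Such a family exists for any $X$ with $\operatorname{card}(X)=n$: by Condition \ref{C4}, $\Omega$ contains a collection of interior cones of total measure comparable to $|\Omega|$, and only a constant fraction of this measure can be excluded by $n$ balls of radius $O(n^{-1/d})$ around the design points. At each $y_j$, the optimality of the best linear predictor under the correctly specified correlation $\Psi$ gives
\[
\operatorname{Var}(G(y_j))\ge\operatorname{Var}\bigl(Z(y_j)-\mathcal{I}_{\Psi,X}Z(y_j)\bigr)=\sigma^2 P_{\Psi,X}^2(y_j),
\]
where $P_{\Psi,X}$ denotes the power function of $\Psi$. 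Condition \ref{C1} combined with standard power-function lower bounds for kernels with algebraically decaying spectral densities (in the spirit of the arguments in \cite{narcowich2006sobolev}) then gives $P_{\Psi,X}^2(y_j)\gtrsim n^{-2\nu_0/d}$, since $y_j$ lies in a hole of radius $\gtrsim n^{-1/d}$.

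Finally, I apply Sudakov minoration to the Gaussian vector $(G(y_j))_{j=1}^m$ to conclude
\[
\mathbb{E}\sup_{x\in\Omega}|G(x)|\;\ge\;\mathbb{E}\max_{j}G(y_j)\;\gtrsim\;\sigma n^{-\nu_0/d}\sqrt{\log m}\;\asymp\;\sigma n^{-\nu_0/d}\sqrt{\log n},
\]
provided the canonical metric $d_G(y_i,y_j):=\sqrt{\operatorname{Var}(G(y_i)-G(y_j))}$ is bounded below by $c\sigma n^{-\nu_0/d}$ uniformly in $i\ne j$. The main obstacle is precisely this $d_G$ bound: it requires extending the single-point power-function estimate from the preceding paragraph to the two-point functional $Z(y_i)-Z(y_j)$ and carefully exploiting the separation $\|y_i-y_j\|\gtrsim n^{-1/d}$ together with the algebraic decay of $f_\Psi$ from Condition \ref{C1}. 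Once this is in place, combining the Sudakov conclusion with the Borell--TIS bound from the first paragraph delivers the claimed high-probability lower bound.
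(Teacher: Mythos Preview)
Your overall architecture matches the paper's: reduce to a lower bound on $\mathbb{E}\sup_x|G(x)|$ via Borell--TIS using $\sigma_*^2\le C_1\sigma^2 A$ (this is precisely Lemmas \ref{th:Q} and \ref{th:Qunder} in the paper), then feed a packing/covering estimate into a Sudakov-type inequality. The divergence is in how the Sudakov step is executed, and the issue you yourself flag as the ``main obstacle'' is real and unresolved in your write-up.

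The paper sidesteps this obstacle entirely. Instead of first fixing test points $y_1,\dots,y_m$ that are well separated in the \emph{Euclidean} metric and then trying to verify $d_G(y_i,y_j)\gtrsim\sigma n^{-\nu_0/d}$, the paper bounds the $d_G$-covering number directly. For an \emph{arbitrary} $n$-point set $Y=\{y_1,\dots,y_n\}\subset\Omega$ and any $y\in\Omega$, $y_j\in Y$,
\[
d_G^2(y,y_j)=\mathbb{E}\bigl[G(y)-G(y_j)\bigr]^2\;\ge\;P_{\Psi,\,X\cup Y}^2(y),
\]
because $\mathcal{I}_{\Phi,X}Z(y)+Z(y_j)-\mathcal{I}_{\Phi,X}Z(y_j)$ is a linear predictor of $Z(y)$ built from observations at $X\cup\{y_j\}\subset X\cup Y$, and hence can do no better than the BLUP $\mathcal{I}_{\Psi,X\cup Y}Z(y)$. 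An eigenvalue-based lower bound (Corollary \ref{coro:Qlower}, obtained from approximation numbers of the Sobolev embedding rather than from a geometric hole argument) then gives $\sup_{y}P_{\Psi,X\cup Y}(y)\gtrsim(2n)^{-\nu_0/d}$. Since $Y$ was arbitrary, this shows $N(c\,n^{-\nu_0/d},\Omega,d_G)>n$, and Lemma \ref{lbexgau} yields $\mathbb{E}\sup_x|G(x)|\gtrsim\sigma n^{-\nu_0/d}\sqrt{\log n}$.

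What this buys: the paper needs neither your explicit Euclidean packing construction nor a pointwise lower bound of the type $P_{\Psi,X}(y)\gtrsim\operatorname{dist}(y,X)^{\nu_0}$. Your route can in fact be repaired---the very same BLUP comparison, applied with $Y=\{y_j\}$, gives $d_G^2(y_i,y_j)\ge P_{\Psi,X\cup\{y_j\}}^2(y_i)$, and then a pointwise power-function lower bound closes your gap---but that is heavier machinery than the paper uses. The key idea you are missing is to compare $d_G$ not to the power function of $X$ but to that of the \emph{enlarged} design $X\cup Y$, which converts the canonical-metric packing problem directly into a sample-size bound.
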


\subsection{Bounds for the $L_p$ norms of the kriging prediction error} \label{sec:bounds}
Now we consider the $L_p$ norm of the kriging prediction error, given by
\begin{eqnarray}\label{krigingerrorLp}
\|Z-\mathcal{I}_{\Phi,X}Z\|_{L_p(\Omega)}:=\left(\int_\Omega |Z(x)-\mathcal{I}_{\Phi,X}Z(x)|^p d x\right)^{1/p},
\end{eqnarray}
with $1\leq p < \infty$. The upper bounds of the $L_p$ norms of the kriging prediction error with undersmoothed and oversmoothed correlation functions are provided in Theorems \ref{Th:wtwLp} and \ref{Th:mainLp}, respectively.

\begin{theorem}\label{Th:wtwLp}
Suppose Conditions \ref{C4}-\ref{C3} hold and $\nu\leq\nu_0$. Then there exist constants $C_1,C_2>0$ and $h_0\in(0,1]$, such that for any design $X$ with $h_{X,\Omega}\leq h_0$ and any $t>0$, with probability at least $1-\exp\{-t^2/(C_1\sigma^2h^{2\nu}_{X,\Omega})\}$, the kriging prediction error has the upper bound
\begin{eqnarray*}
	\|Z-\mathcal{I}_{\Phi,X}Z\|_{L_p(\Omega)}\leq C_2\sigma h^\nu_{X,\Omega}+t.
\end{eqnarray*}
The constants $C_1,C_2$ depend only on $\Omega, p, \Phi$, and $\Psi$, including $\nu$ and $\nu_0$.
\end{theorem}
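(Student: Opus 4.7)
\textbf{Proof plan for Theorem \ref{Th:wtwLp}.} Let $G(x) := Z(x) - \mathcal{I}_{\Phi,X}Z(x)$. Since $\mathcal{I}_{\Phi,X}$ is linear in the observations, $G$ is a zero-mean Gaussian process on $\Omega$. The plan is two-staged: first bound $\mathbb{E}\|G\|_{L_p(\Omega)}$ via a pointwise variance estimate, then apply a Borell--TIS-type concentration inequality for a Gaussian random element of the Banach space $L_p(\Omega)$. The central gain over Theorem \ref{Th:wtw} is that replacing the supremum by $L_p$ integration sidesteps the chaining/entropy $\sqrt{\log(1/h_{X,\Omega})}$ factor needed for the uniform norm.

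For the first stage, the key input is the pointwise variance bound $\mathrm{Var}(G(x)) \leq C\sigma^2 h^{2\nu}_{X,\Omega}$ uniformly in $x \in \Omega$, valid when $h_{X,\Omega}\leq h_0$ and $\nu\leq\nu_0$. This is precisely the pointwise kernel-misspecification bound underlying the analysis in \cite{wang2019prediction}; it follows by estimating the power function of the misspecified interpolant, which exploits the embedding of the native space of $\Phi$ into that of $\Psi$ when $\nu\leq\nu_0$. Given it, the Gaussian absolute-moment formula together with Fubini yields
\[
\mathbb{E}\|G\|_{L_p(\Omega)}^p = \int_\Omega \mathbb{E}|G(x)|^p\, dx \leq m_p\,|\Omega|\,(C \sigma^2 h^{2\nu}_{X,\Omega})^{p/2},
\]
where $m_p = \mathbb{E}|N(0,1)|^p$, and Jensen's inequality then gives $\mathbb{E}\|G\|_{L_p(\Omega)} \leq C_2' \sigma h^\nu_{X,\Omega}$ for a constant depending only on $p$, $|\Omega|$, $\Phi$ and $\Psi$.

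For the second stage, view $G$ as a centered Gaussian random element of the separable Banach space $B = L_p(\Omega)$, with dual $B^\ast = L_q(\Omega)$ for $1/p + 1/q = 1$. The one-sided Borell--TIS inequality (the same variant referenced in the footnote to Theorem \ref{Th:wtw} and formalized later as Lemma \ref{concentrationGP}) gives
\[
P\bigl(\|G\|_{L_p(\Omega)} > \mathbb{E}\|G\|_{L_p(\Omega)} + t\bigr) \leq \exp\!\bigl(-t^2/(2s^2)\bigr),
\]
where $s^2 = \sup_{\|h\|_{L_q(\Omega)}\leq 1} \mathrm{Var}\bigl(\int_\Omega h\, G\bigr)$. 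By Cauchy--Schwarz on the covariance kernel, $|\mathrm{Cov}(G(x),G(y))|\leq C\sigma^2 h^{2\nu}_{X,\Omega}$, and H\"older's inequality gives $(\int_\Omega |h|)^2 \leq |\Omega|^{2/p}\|h\|_{L_q(\Omega)}^2$, so $s^2 \leq C''\sigma^2 h^{2\nu}_{X,\Omega}$. Combining the two stages and relabeling constants produces the stated inequality.

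The only delicate ingredient is the pointwise variance bound used in the first stage: it encapsulates all kernel-specific work in the under-smoothed regime and is inherited from \cite{wang2019prediction}. Once it is in hand, the remainder is generic Gaussian-in-a-Banach-space concentration, and it is precisely the absence of a supremum over $x$ that permits the log-free rate that distinguishes Theorem \ref{Th:wtwLp} from Theorem \ref{Th:wtw}.
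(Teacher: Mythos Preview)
Your proposal is correct and follows essentially the same route as the paper: control the pointwise variance $\mathrm{Var}(G(x))\leq C\sigma^2 h_{X,\Omega}^{2\nu}$ (this is Lemma~\ref{th:Qunder} in the paper), pass to $\mathbb{E}\|G\|_{L_p}$ via Fubini and the Gaussian absolute-moment formula, then apply Gaussian concentration for the $L_p$ norm. The only difference is packaging of the concentration step: the paper states and proves a dedicated $L_p$ concentration result (Lemma~\ref{thm211}, with variance parameter $\sup_x\mathrm{Var}(G(x))$ and constant $C_p=\mathrm{Vol}(\Omega)^{2/p}$) by finite-dimensional approximation and the scalar Lipschitz Gaussian concentration inequality, whereas you invoke the abstract Banach-space Borell--TIS inequality and bound the weak variance $s^2$ directly; your H\"older computation recovers exactly the paper's constant $\mathrm{Vol}(\Omega)^{2/p}\sigma_G^2$, so the two are equivalent. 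One small correction: Lemma~\ref{concentrationGP}, which you cite parenthetically, is the $L_\infty$ (supremum) version of Borell--TIS and is not literally the statement you need---the $L_p$ version is Lemma~\ref{thm211} in the paper, or else the general Banach-space form from, e.g., Ledoux--Talagrand.
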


\begin{theorem}\label{Th:mainLp}
	Suppose Conditions \ref{C4}-\ref{C3} hold and $\nu>\nu_0$. Then there exist constants $C_1,C_2>0$ and $h_0\in(0,1]$, such that for any design $X$ with $h_{X,\Omega}\leq h_0$ and any $t>0$, with probability at least $1-\exp\{-t^2/(C_1\sigma^2 h_{X,\Omega}^{2\nu_0}\rho^{2(\nu-\nu_0)}_{X,\Omega})\}$, the kriging prediction error has the upper bound
	$$ \|Z-\mathcal{I}_{\Phi,X}Z\|_{L_p(\Omega)}\leq C_2 \sigma h_{X,\Omega}^{\nu_0}\rho^{\nu-\nu_0}_{X,\Omega} +t. $$
	Here the constants $C_1,C_2$ depend only on $\Omega, p, \Phi$, and $\Psi$, including $\nu$ and $\nu_0$.
\end{theorem}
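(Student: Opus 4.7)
Proof proposal. The strategy parallels that of Theorem \ref{Th:main}, but since the $L_p$ norm for $p<\infty$ does not involve a supremum over $\Omega$, the extra $\sqrt{\log(1/h_{X,\Omega})}$ factor can be avoided. Write $e(x) := Z(x) - \mathcal{I}_{\Phi,X}Z(x)$; this is a centered Gaussian process on $\Omega$ with pointwise variance $\sigma_e^2(x) := \mathrm{Var}(e(x))$. The argument has two parts: (i) control the expectation $\mathbb{E}\|e\|_{L_p(\Omega)}$ by a deterministic pointwise estimate on $\sigma_e$, and (ii) upgrade to a tail bound via Gaussian concentration.

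For part (i), I would use Jensen's inequality and the formula for the $p$-th absolute moment of a Gaussian to write
$$\mathbb{E}\|e\|_{L_p(\Omega)} \leq \bigl(\mathbb{E}\|e\|_{L_p(\Omega)}^p\bigr)^{1/p} = c_p^{1/p}\|\sigma_e\|_{L_p(\Omega)} \leq c_p^{1/p}|\Omega|^{1/p}\sup_{x\in\Omega}\sigma_e(x).$$
The standard duality identity
$$\sigma_e(x)^2/\sigma^2 = \sup_{\|f\|_{\mathcal{N}_\Psi}\leq 1}\bigl|f(x) - \mathcal{I}_{\Phi,X}f(x)\bigr|^2$$
converts the pointwise variance into a deterministic scattered-data approximation problem: the error of reconstructing a function from the native space $\mathcal{N}_\Psi$ of the rougher kernel $\Psi$ by the $\Phi$-kriging interpolant. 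The oversmoothed approximation results developed in Section \ref{sec:escape} supply exactly the pointwise bound
$$|f(x) - \mathcal{I}_{\Phi,X}f(x)| \leq C h_{X,\Omega}^{\nu_0}\rho_{X,\Omega}^{\nu-\nu_0}\|f\|_{\mathcal{N}_\Psi},$$
uniformly in $x \in \Omega$, yielding $\sup_x \sigma_e(x) \leq C\sigma h_{X,\Omega}^{\nu_0}\rho_{X,\Omega}^{\nu-\nu_0}$ and hence the desired expectation bound with no logarithmic factor.

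For part (ii), I would apply Borell-TIS to $e$ viewed as a Gaussian vector in $L_p(\Omega)$. Writing $\|\cdot\|_{L_p(\Omega)}$ as a supremum of linear functionals over the unit ball of $L_q(\Omega)$ with $q=p/(p-1)$, the weak variance equals $\sup_{\|g\|_{L_q}\leq 1}\mathrm{Var}\bigl(\int e(x)g(x)\,dx\bigr)$, and Cauchy-Schwarz together with Hölder bounds this by $\|\sigma_e\|_{L_p(\Omega)}^2 \leq C\sigma^2h_{X,\Omega}^{2\nu_0}\rho_{X,\Omega}^{2(\nu-\nu_0)}$ from part (i). Borell-TIS then gives
$$P\bigl(\|e\|_{L_p(\Omega)} - \mathbb{E}\|e\|_{L_p(\Omega)} > t\bigr) \leq \exp\bigl(-t^2/(2C\sigma^2 h_{X,\Omega}^{2\nu_0}\rho_{X,\Omega}^{2(\nu-\nu_0)})\bigr),$$
and combining with the expectation bound from part (i) yields the statement after absorbing constants into $C_1$ and $C_2$.

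The main obstacle is the oversmoothed pointwise variance estimate $\sigma_e(x) \lesssim \sigma h^{\nu_0}\rho^{\nu-\nu_0}$: the representer $\Psi(\cdot,x)$ is rougher than any element of the native space of $\Phi$, so classical reproducing-kernel interpolation bounds do not apply, and the mesh ratio $\rho_{X,\Omega}$ must enter to compensate for this smoothness mismatch. This is exactly what the escape-from-the-native-space theory in Section \ref{sec:escape} provides. Once that deterministic estimate is granted, the Jensen reduction and the Borell-TIS concentration step are routine, and the only care needed is in book-keeping constants so that the probability exponent takes the clean form $t^2/(C_1\sigma^2 h_{X,\Omega}^{2\nu_0}\rho_{X,\Omega}^{2(\nu-\nu_0)})$ stated in the theorem.
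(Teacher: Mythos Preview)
Your proposal is correct and follows essentially the same route as the paper: bound $\mathbb{E}\|e\|_{L_p(\Omega)}$ via Fubini/Jensen and the Gaussian moment formula, control $\sup_x\sigma_e(x)$ by the quasi-power function together with the escape estimate of Section~\ref{sec:escape} (this is Lemma~\ref{th:Q}), and then apply Gaussian concentration. The only packaging difference is that you invoke Borell--TIS directly for the Banach-space-valued Gaussian via the $L_q$-duality representation of the $L_p$ norm, whereas the paper isolates this step as a separate $L_p$ concentration lemma (Lemma~\ref{thm211}) proved by discretizing $\|G\|_{L_p(\Omega)}$ through Riemann sums and applying the finite-dimensional Gaussian concentration inequality; both yield the same variance proxy $\sup_x\sigma_e(x)^2$ up to a volume constant.
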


Regarding the lower prediction error bounds under the $L_p$ norm, we obtain a result analogous to Theorem \ref{th:low}. Theorem \ref{coro:lowLp} suggests a lower bound under the $L_p$ norm, which differs from that in Theorem \ref{th:low} only by a $\sqrt{\log n}$ factor.

\begin{theorem}\label{coro:lowLp}
Suppose Conditions \ref{C4}-\ref{C3} hold. There exist constants $C_1,C_2>0$, such that for any design $X$ satisfying ${\rm card}(X)=n$ and any $t>0$, with probability at least $1-2\exp\{-t^2/(2C_1\sigma^2 A)\}$, the kriging prediction error has the lower bound
\begin{eqnarray*}
\|Z-\mathcal{I}_{\Phi,X}Z\|_{L_p(\Omega)} \geq C_2\sigma n^{-\frac{\nu_0}{d}}- t
\end{eqnarray*}
for $1 \leq p< \infty$, where $A = h^{2\nu}_{X,\Omega}$ if $\nu \leq \nu_0$, and $A =h_{X,\Omega}^{2\nu_0}\rho^{2(\nu-\nu_0)}_{X,\Omega}$ if $\nu > \nu_0$. Here the constants $C_1,C_2>0$ depend only on $\Omega, p, \Phi$, and $\Psi$, including $\nu$ and $\nu_0$.
\end{theorem}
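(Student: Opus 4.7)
The strategy parallels Theorem \ref{th:low} with the uniform norm replaced by the $L_p$ norm; the $\sqrt{\log n}$ factor present in the $L_\infty$ setting (entering there through the expected supremum of a Gaussian process) is absent here. Set $G := Z - \mathcal{I}_{\Phi,X}Z$. Since $\mathcal{I}_{\Phi,X}Z(x) = r_\Phi^T(x)K_\Phi^{-1}Y$ is linear in $Y = (Z(x_1),\ldots,Z(x_n))^T$, the process $G$ is a centered Gaussian process on $\Omega$. The plan is to combine an in-expectation lower bound from the average-case theory with a Gaussian concentration inequality applied to the $L_p$-norm functional.

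The first ingredient is the in-expectation bound. Applying Lemma \ref{th:lowLp}, which extends the $L_2$ lower bound of \cite{papageorgiou1990average} to all $1\leq p<\infty$, to the algorithm $\phi = \mathcal{I}_{\Phi,X}$ (a map from $n$ observations of $Z$ to an element of $L_p(\Omega)$) yields
$$\mathbb{E}\|G\|_{L_p(\Omega)} \geq c_0\,\sigma\, n^{-\nu_0/d},$$
with $c_0>0$ depending only on $\Omega,p,\Phi,\Psi$.

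The second ingredient is the Borell-TIS inequality applied to $G$ as a Gaussian random element of $L_p(\Omega)$: writing $q$ for the H\"older conjugate of $p$,
$$\mathbb{P}\bigl(\bigl|\|G\|_{L_p(\Omega)} - \mathbb{E}\|G\|_{L_p(\Omega)}\bigr| > t\bigr) \leq 2\exp\bigl(-t^2/(2\sigma_*^2)\bigr),$$
where the weak variance satisfies
$$\sigma_*^2 \;=\; \sup_{\|\phi\|_{L_q(\Omega)}\leq 1}\mathrm{Var}\!\left(\int_\Omega\phi(x)G(x)\,dx\right) \;\leq\; |\Omega|^{2/p}\sup_{x\in\Omega}\mathrm{Var}(G(x)).$$
The last inequality combines $|\mathrm{Cov}(G(x),G(y))|\leq\sqrt{\mathrm{Var}\,G(x)\,\mathrm{Var}\,G(y)}$ with H\"older on $\Omega$. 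The pointwise variance $\mathrm{Var}(G(x))$ is the kriging power function of the misspecified model, and the deterministic estimates underlying the proofs of Theorems \ref{Th:wtw} and \ref{Th:main} give $\sup_{x\in\Omega}\mathrm{Var}(G(x))\leq C\sigma^2 A$, with $A$ as in the statement. Thus $\sigma_*^2 \leq C_1\sigma^2 A$, and combining the two ingredients with $C_2=c_0$ yields the claimed probability bound.

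\textbf{Principal obstacle.} The delicate step is the in-expectation lower bound when $1\leq p<2$: Papageorgiou's classical result is stated in $L_2$, and H\"older's inequality on the bounded domain $\Omega$ transfers an $L_2$ lower bound to $L_p$ only when $p\geq 2$. Lemma \ref{th:lowLp} must therefore be formulated in a $p$-general form; this is available via Proposition VI.8 of \cite{ritter2007average} on $[0,1]^d$, transported to any admissible $\Omega$ by a localization argument based on restricting attention to a sub-cube contained in $\Omega$, whose existence is guaranteed by the interior cone condition of Condition \ref{C4}.
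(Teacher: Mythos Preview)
Your overall architecture---an in-expectation lower bound for $\|Z-\mathcal{I}_{\Phi,X}Z\|_{L_p(\Omega)}$ combined with Gaussian concentration for the $L_p$ functional---matches the paper's, and your concentration step is essentially the paper's Lemma \ref{thm211}. The variance bound $\sigma_*^2\leq C\sigma^2 A$ via Lemmas \ref{th:Q}--\ref{th:Qunder} is also correct.

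The gap is exactly where you flag it, but your proposed fix does not close it. Lemma \ref{th:lowLp} in the paper is \emph{only} the $L_2$ statement $\|P_{\Psi,X}\|_{L_2(\Omega)}^2\geq\sum_{k>n}\lambda_k$; it does not extend Papageorgiou's bound to general $p$. Likewise, Proposition VI.8 of \cite{ritter2007average}, as the paper itself describes it, is again an $L_2$ result, so the localization-to-a-subcube argument still leaves you with an $L_2$ lower bound that does not transfer to $L_p$ for $p<2$ by H\"older. The underlying difficulty is real: writing $Q(x)^2=\mathbb{E}(Z(x)-\mathcal{I}_{\Phi,X}Z(x))^2$, one has $\mathbb{E}\|g_Z\|_{L_p}^p=c_p\|Q\|_{L_p}^p$, and while $\|Q\|_{L_2}\gtrsim n^{-\nu_0/d}$ always holds, $\|Q\|_{L_1}$ can in principle be much smaller if $Q$ is concentrated---and for a completely arbitrary $n$-point design $X$ you have no a priori upper control on $Q$ to rule that out.

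The paper resolves this with an idea you are missing: it \emph{augments} $X$ by an auxiliary quasi-uniform $n$-point set $X'$ and works with $P_{\Psi,X\cup X'}$ instead of $Q$. This object has simultaneously a lower bound $\|P_{\Psi,X\cup X'}\|_{L_2}\gtrsim (2n)^{-\nu_0/d}$ (Lemma \ref{lem:Qlower}) and an upper bound $\sup_x P_{\Psi,X\cup X'}(x)\lesssim h_{X',\Omega}^{\nu_0}\lesssim n^{-\nu_0/d}$ (Lemma \ref{th:Qunder} with $\nu=\nu_0$, using only that $X'$ is quasi-uniform). A H\"older interpolation $\|f\|_{L_2}\leq\|f\|_{L_1}^{1/4}\|f\|_{L_3}^{3/4}$ then yields $\|P_{\Psi,X\cup X'}\|_{L_1}\gtrsim n^{-\nu_0/d}$. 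Finally the best-linear-predictor inequality gives $Q(x)\geq P_{\Psi,X\cup X'}(x)$ pointwise, hence $\mathbb{E}\|g_Z\|_{L_1}=\sqrt{2/\pi}\,\|Q\|_{L_1}\gtrsim n^{-\nu_0/d}$, and the bound propagates to every $L_p$, $p\geq 1$, since $\Omega$ is bounded. This auxiliary-design trick is the missing ingredient in your plan.
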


The results in Theorems \ref{Th:wtw}, \ref{Th:main}, \ref{Th:wtwLp} and \ref{Th:mainLp} are presented in a non-asymptotic manner, i.e., the design $X$ is fixed. The asymptotic results, which are traditionally of interest in spatial statistics, can be inferred from these non-asymptotic results. Here we consider the so-called fixed-domain asymptotics \citep{stein2012interpolation,loh2005fixed}, in which the domain $\Omega$ is kept unchanged and the design points become dense over $\Omega$.

We collect the asymptotic rates analogous to the upper bounds in Corollaries \ref{coro:rate} and \ref{coro:rateLp}. Their proofs are straightforward.

\begin{corollary}\label{coro:rate}
	Suppose Conditions \ref{C4}-\ref{C3} hold. In addition, we suppose the sampling scheme $\mathcal{X}$ is asymptotically dense over $\Omega$, that is, $h_n\rightarrow 0$ as $n\rightarrow \infty$. We further assume $h_n^{\nu_0}\rho^{(\nu-\nu_0)}_n\rightarrow 0$ if $\nu>\nu_0$. Then the uniform kriging prediction error has the order of magnitude
	\begin{align*}
		\sup_{ x\in\Omega}|Z(x)-\mathcal{I}_{\Phi,X_n}Z(x)| =
	\begin{cases}
	O_{\mathbb{P}}\left(h_n^\nu\log^{1/2}(1/h_n)\right) &\text{if } \nu\leq \nu_0,\\
	O_{\mathbb{P}}\left(h_n^{\nu_0}\rho_n^{\nu-\nu_0}\log^{1/2}(1/h_n)\right)&\text{if } \nu>\nu_0.
	\end{cases}
	\end{align*}
\end{corollary}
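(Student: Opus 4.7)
The proof is a direct translation of the non-asymptotic tail bounds in Theorems \ref{Th:wtw} and \ref{Th:main} into the stochastic-boundedness language, so my plan is to apply each theorem with the slack parameter $t$ chosen as a constant multiple of the respective leading term and then let that constant play the role that makes the exceptional probability vanish.

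First I would treat the case $\nu\leq \nu_0$. For a constant $M>0$, I would invoke Theorem \ref{Th:wtw} with
\begin{equation*}
t_n := M\sigma h_n^\nu \log^{1/2}(C_3/h_n).
\end{equation*}
Since $h_n\to 0$, eventually $h_n\leq h_0$, so Theorem \ref{Th:wtw} applies and the exceptional probability simplifies to
\begin{equation*}
\exp\!\left\{-\frac{t_n^2}{C_1\sigma^2 h_n^{2\nu}}\right\}=\exp\!\left\{-\frac{M^2\log(C_3/h_n)}{C_1}\right\}=\left(\frac{h_n}{C_3}\right)^{M^2/C_1},
\end{equation*}
which tends to zero as $n\to\infty$. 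On the complementary event,
\begin{equation*}
\sup_{x\in\Omega}|Z(x)-\mathcal{I}_{\Phi,X_n}Z(x)|\leq (C_2+M)\sigma h_n^\nu \log^{1/2}(C_3/h_n),
\end{equation*}
and since $\log^{1/2}(C_3/h_n)\asymp \log^{1/2}(1/h_n)$ for $h_n\to 0$, this gives exactly the $O_{\mathbb{P}}(h_n^\nu\log^{1/2}(1/h_n))$ claim after letting $M$ be any fixed value (e.g.\ $M=1$ already suffices because the vanishing probability is uniform in $n$).

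For $\nu>\nu_0$ I would proceed identically, replacing Theorem \ref{Th:wtw} by Theorem \ref{Th:main} and choosing $t_n := M\sigma h_n^{\nu_0}\rho_n^{\nu-\nu_0}\log^{1/2}(C_3/h_n)$. The exceptional probability again collapses to $(h_n/C_3)^{M^2/C_1}\to 0$, and the complementary event yields the claimed $O_{\mathbb{P}}(h_n^{\nu_0}\rho_n^{\nu-\nu_0}\log^{1/2}(1/h_n))$ bound. The additional hypothesis $h_n^{\nu_0}\rho_n^{\nu-\nu_0}\to 0$ is not needed to make the deduction go through, but it is what guarantees that the $O_{\mathbb{P}}$ bound is genuinely a vanishing bound rather than a trivial one.

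There is essentially no analytical obstacle here; the only bookkeeping item is to check that $C_3$ being a fixed constant means the additive $\log C_3$ inside $\log(C_3/h_n)$ is negligible beside $\log(1/h_n)$, so the $\asymp$ in the conclusion is legitimate. Accordingly, I would keep the write-up to a short paragraph, essentially inserting the calculation of the exceptional probability and invoking the definition of $O_{\mathbb{P}}$.
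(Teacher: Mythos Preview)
Your proposal is correct and matches the paper's intent: the paper itself offers no proof beyond the remark that Corollaries~\ref{coro:rate} and~\ref{coro:rateLp} are ``straightforward'' consequences of the non-asymptotic Theorems~\ref{Th:wtw} and~\ref{Th:main}, and your argument is precisely the routine calculation one performs to pass from a sub-Gaussian tail bound to an $O_{\mathbb{P}}$ statement. Your observation that the extra hypothesis $h_n^{\nu_0}\rho_n^{\nu-\nu_0}\to 0$ is not formally needed for the $O_{\mathbb{P}}$ deduction (only for the bound to be nontrivial) is also accurate.
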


\begin{corollary}\label{coro:rateLp}
	Under the conditions of Corollary \ref{coro:rate}, for $1\leq p<\infty$, the kriging prediction error has the order of magnitude in $L_p(\Omega)$
	\begin{align*}
		\|Z(x)-\mathcal{I}_{\Phi,X_n}Z(x)\|_{L_p(\Omega)} =
	\begin{cases}
	O_{\mathbb{P}}\left(h_n^\nu\right) &\text{if } \nu\leq \nu_0,\\
	O_{\mathbb{P}}\left(h_n^{\nu_0}\rho_n^{\nu-\nu_0}\right)&\text{if } \nu>\nu_0.
	\end{cases}
	\end{align*}
\end{corollary}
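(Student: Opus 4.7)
The plan is to directly invoke Theorem \ref{Th:wtwLp} for the undersmoothed case $\nu\leq\nu_0$ and Theorem \ref{Th:mainLp} for the oversmoothed case $\nu>\nu_0$, applied to each design $X_n$ in the sampling scheme, and then convert the resulting non-asymptotic tail bounds into an $O_{\mathbb{P}}$ statement. Denote the target deterministic rate by $a_n$: namely, $a_n = h_n^\nu$ when $\nu\leq\nu_0$ and $a_n = h_n^{\nu_0}\rho_n^{\nu-\nu_0}$ when $\nu>\nu_0$. Under the hypotheses of Corollary \ref{coro:rate} we have $a_n\to 0$, so in particular $h_n\leq h_0$ eventually, and Theorems \ref{Th:wtwLp} and \ref{Th:mainLp} apply. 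A small but essential observation is that the scale $A$ appearing in the exponent of the corresponding tail bound satisfies $\sqrt{A}=a_n$ in both regimes, which is what makes the reduction clean.

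For any prescribed $\varepsilon>0$, choose a constant $M$ large enough that $\exp(-M^2/C_1)\leq \varepsilon$, where $C_1$ is the constant from the relevant theorem. Setting $t = t_n := M\sigma a_n$ in Theorem \ref{Th:wtwLp} (respectively Theorem \ref{Th:mainLp}) yields, for all sufficiently large $n$,
\begin{align*}
\mathbb{P}\!\left(\|Z-\mathcal{I}_{\Phi,X_n}Z\|_{L_p(\Omega)} > (C_2+M)\sigma a_n\right) \leq \exp\!\left(-\tfrac{M^2}{C_1}\right) \leq \varepsilon.
\end{align*}
Since $\varepsilon>0$ was arbitrary and $C_2$, $M$ do not depend on $n$, this gives
$\|Z-\mathcal{I}_{\Phi,X_n}Z\|_{L_p(\Omega)} = O_{\mathbb{P}}(a_n)$, which is precisely the claim in each of the two cases.

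I do not anticipate any genuine obstacle; the argument is essentially bookkeeping. The only point that deserves a little care is verifying that $\sqrt{A}$ and $a_n$ coincide in both regimes, so that the exponent $-t^2/(C_1\sigma^2 A)$ collapses to the constant $-M^2/C_1$ under the choice $t=M\sigma a_n$. Everything else is a routine conversion from a sub-Gaussian concentration inequality to an order-in-probability statement, which is why the authors declare the proof straightforward.
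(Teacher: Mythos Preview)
Your proposal is correct and is precisely the straightforward derivation the authors have in mind: apply Theorem \ref{Th:wtwLp} or Theorem \ref{Th:mainLp} to each $X_n$, choose $t$ proportional to the rate $a_n$ so that the exponent becomes a constant, and read off the $O_{\mathbb{P}}$ statement. The paper does not spell out a proof beyond declaring it straightforward, and your write-up fills in exactly those details with the correct identification $\sqrt{A}=a_n$ in both regimes.
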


From Corollaries \ref{coro:rate} and \ref{coro:rateLp}, we find that the upper bounds of kriging prediction error strongly depend on the sampling scheme $\mathcal{X}$.

If a sampling scheme is quasi-uniform and $\nu\geq \nu_0$, then the orders of magnitude in Corollaries \ref{coro:rate} and \ref{coro:rateLp} agree with the lower bounds in Theorems \ref{th:low} and \ref{coro:lowLp}, respectively, implying that these bounds are sharp. We summarize the results in Corollary \ref{coro:quasirate}.

\begin{corollary}\label{coro:quasirate}
Suppose Conditions \ref{C4}-\ref{C3} hold and $\nu\geq \nu_0$. In addition, we suppose the sampling scheme $\mathcal{X}$ is quasi-uniform. Then the kriging prediction error has the exact order of magnitude
\begin{align*}
\sup_{ x\in\Omega}|Z(x)-\mathcal{I}_{\Phi,X_n}Z(x)| \asymp &  n^{-\nu_0/d}\log^{1/2} n,\\
\|Z(x)-\mathcal{I}_{\Phi,X_n}Z(x)\|_{L_p(\Omega)} \asymp &  n^{-\nu_0/d}, ~~~ 1\leq p < \infty.
\end{align*}
\end{corollary}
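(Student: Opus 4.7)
The plan is to combine the upper-bound rates already recorded in Corollaries \ref{coro:rate} and \ref{coro:rateLp} with the lower-bound rates supplied by Theorems \ref{th:low} and \ref{coro:lowLp}, using the quasi-uniform hypothesis to convert every fill-distance and mesh-ratio factor into an explicit power of $n$.

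First I would translate quasi-uniformity into asymptotic rates for $h_n$ and $\rho_n$. Combining the universal bounds $h_n\gtrsim n^{-1/d}$ and $q_n\lesssim n^{-1/d}$ with the assumed uniform bound on $\rho_n=h_n/q_n$ forces $h_n\asymp q_n\asymp n^{-1/d}$ and $\rho_n\asymp 1$. In particular $h_n^{\nu_0}\rho_n^{\nu-\nu_0}\asymp n^{-\nu_0/d}$, so the side condition $h_n^{\nu_0}\rho_n^{\nu-\nu_0}\to 0$ in Corollary \ref{coro:rate} is automatic. Substituting these rates into the statements of Corollaries \ref{coro:rate} and \ref{coro:rateLp} collapses both the $\nu=\nu_0$ and the $\nu>\nu_0$ cases to
\[
\sup_{x\in\Omega}|Z(x)-\mathcal{I}_{\Phi,X_n}Z(x)|=O_{\mathbb{P}}\!\bigl(n^{-\nu_0/d}\sqrt{\log n}\bigr),\quad \|Z-\mathcal{I}_{\Phi,X_n}Z\|_{L_p(\Omega)}=O_{\mathbb{P}}\!\bigl(n^{-\nu_0/d}\bigr),
\]
yielding the upper halves of the two displays.

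For the matching lower bounds I would instantiate Theorem \ref{th:low} with $A = h_n^{2\nu_0}\rho_n^{2(\nu-\nu_0)}\asymp n^{-2\nu_0/d}$ (which agrees with the $\nu=\nu_0$ form $A=h_n^{2\nu}$). Choosing $t=\tfrac{1}{2}C_2\sigma n^{-\nu_0/d}\sqrt{\log n}$ makes the tail exponent $t^2/(2C_1\sigma^2 A)$ of order $\log n$, so the sup-norm lower bound $\tfrac{1}{2}C_2\sigma n^{-\nu_0/d}\sqrt{\log n}$ holds with probability $1-O(n^{-c})$ for some $c>0$; combined with the upper bound this proves the first display. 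An analogous application of Theorem \ref{coro:lowLp} with $t\asymp n^{-\nu_0/d}$ yields the $L_p$ lower bound and hence the second display.

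The main subtlety is the $L_p$ lower bound: because $A\asymp n^{-2\nu_0/d}$ sits exactly at the square of the target rate, the exponent $t^2/(2C_1\sigma^2 A)$ is only of order one when $t$ is of order $n^{-\nu_0/d}$, with no logarithmic cushion available to drive the failure probability to zero. Hence the $L_p$ lower bound holds only with probability bounded away from zero, and the symbol $\asymp$ in the second line of the corollary should be read at the level of stochastic order (equivalently, of expected $L_p$ norm, using Gaussian concentration to control both tails) rather than with probability tending to one. Once this interpretive point is granted, the corollary is a direct arithmetic consequence of the four cited results.
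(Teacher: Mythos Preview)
Your argument is exactly the one the paper intends: the sentence preceding Corollary~\ref{coro:quasirate} says that the result follows by matching the upper bounds in Corollaries~\ref{coro:rate}--\ref{coro:rateLp} with the lower bounds in Theorems~\ref{th:low} and~\ref{coro:lowLp}, after using quasi-uniformity to replace $h_n$ by $n^{-1/d}$ and $\rho_n$ by a constant. Your arithmetic is correct, including the observation that both the $\nu=\nu_0$ and $\nu>\nu_0$ branches collapse to the same rate.

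The subtlety you flag in the last paragraph is genuine and is \emph{not} addressed in the paper. The paper defines $a_n\asymp b_n$ for random sequences as $a_n=O_{\mathbb P}(b_n)$ and $b_n=O_{\mathbb P}(a_n)$ (Remark after Proposition~\ref{propexpfacts}); for the $L_p$ display this requires $\mathbb P(\|g_Z\|_{L_p(\Omega)}<\delta n^{-\nu_0/d})\to 0$ as $\delta\to 0$, uniformly in $n$. As you note, Theorem~\ref{coro:lowLp} with $A\asymp n^{-2\nu_0/d}$ only gives this probability bounded by a fixed constant in $(0,1)$, so the $O_{\mathbb P}$ lower bound does not follow directly from the stated concentration inequality. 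Your fallback to the expected norm is sound: \eqref{lbexppsmall} in the proof of Theorem~\ref{coro:lowLp} together with \eqref{upElp} yields $\mathbb E\|g_Z\|_{L_p(\Omega)}\asymp n^{-\nu_0/d}$ deterministically. In short, your proof matches the paper's and your caveat sharpens it.
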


\subsection{An example}\label{sec:example}
We illustrate the impact of the experimental designs in Example \ref{examplerdsample}.

\begin{expl}\label{examplerdsample}
	The random sampling in $[0,1]$ is \textit{not} quasi-uniform. To see this, let $x_1,\ldots,x_n$ be mutually independent random variables following the uniform distribution on $[0,1]$. Denote their order statistics as
	$$0=x_{(0)}\leq x_{(1)}\leq\cdots\leq x_{(n)}\leq x_{(n+1)}=1. $$
	Clearly, we have
	$$\rho_n=\frac{\max_{0\leq j\leq n}|x_{(j+1)}-x_{(j)}|}{\min_{0\leq j\leq n}|x_{(j+1)}-x_{(j)}|}.$$
	
Let $y_1,\ldots,y_n,y_{n+1}$ be mutually independent random variables following the exponential distribution with mean one. It is well known that $(x_{(1)},\ldots,x_{(n)})$ has the same distribution as
$$\left(\frac{y_1}{\sum_{j=1}^{n+1} y_j},\ldots,\frac{\sum_{j=1}^{n} y_j}{\sum_{j=1}^{n+1} y_j}\right). $$
Thus $\rho_n$ has the same distribution as $\max y_j/\min y_j$. Clearly, $\max y_j\asymp\log n$ and $\min y_j\asymp1/n$. This implies $\rho_n\asymp n\log n$. Similarly, we can see that $h_n$ has the same distribution as $\max y_j/\sum_{k=1}^{n+1} y_k$, which is of the order $O_{\mathbb{P}}(n^{-1}\log n)$. See Appendix \ref{app:distribution} for proofs of the above statements.

Now consider the kriging predictive curve under $\Omega=[0,1]$ and random sampled design points and an oversmoothed correlation, i.e., $\nu>\nu_0$.
According to Corollary \ref{coro:rate}, its uniform error has the order of magnitude $O_{\mathbb{P}}(n^{\nu-2\nu_0}\log^{\nu+1/2} n)$, which decays to zero if $\nu<2\nu_0$.

In Section \ref{sec:simulation}, we will conduct simulation studies to verify our theoretical assertions on the rates of convergence in this example. It can be seen from Table \ref{Tab:simuResults} in Section \ref{sec:simulation} that the numerical results agree with our theory.
\end{expl}

\section{Discussion on a major mathematical tool and the notion of smoothness}
\label{sec:escape}
The theory of radial basis function approximation is an essential mathematical tool for developing the bounds in this work, as well as those in our previous work \cite{wang2019prediction}.
We refer to \cite{wendland2004scattered} for an introduction of the radial basis function approximation theory.

A primary objective of the radial basis function approximation theory is to study the approximation error
$$g-\mathcal{I}_{\Phi,X}g, $$
for a deterministic function $g$. Here we consider the circumstance that $g$ lies in a (fractional) Sobolev space.

Our convention of the Fourier transform is
$\hat{g}(\omega)=\int_{\mathbb{R}^d} g(x)e^{-i\omega^Tx} dx.$
Regarding the Fourier transform as a mapping $ \hat{g}: L_1(\mathbb{R}^d)\cap L_2(\mathbb{R}^d)\rightarrow L_2(\mathbb{R}^d)$, we can uniquely extend it to a mapping $\hat{g}: L_2(\mathbb{R}^d)\rightarrow L_2(\mathbb{R}^d)$ \citep{wendland2004scattered}.
The norm of the (fractional) Sobolev space  $W_2^\beta(\mathbb{R}^d)$ for a real number $\beta>0$ (also known as the Bessel potential space) is
\begin{eqnarray*}
	\|g\|^2_{W^\beta_2(\mathbb{R}^d)}=\int_{\mathbb{R}^d} |\hat{g}(\omega)|^2(1+\|\omega\|^2)^{\beta} d \omega,
\end{eqnarray*}
for $g\in L_2(\mathbb{R}^d)$.

\begin{remark}
An equivalent norm of the Sobolev space $W_2^\beta(\mathbb{R}^d)$ for $\beta\in \mathbb{N}$ can be defined via derivatives. For $\alpha=(\alpha_1,\ldots,\alpha_d)^T\in\mathbb{N}^d_0$, we shall use the notation $|\alpha|=\sum_{j=1}^d \alpha_j$. For $x=(x_1,\ldots,x_d)^T$, denote
$$D^\alpha g=\frac{\partial^{|\alpha|} }{\partial x_1^{\alpha_1}\cdots\partial x_d^{\alpha_d}}g  \text{~~~~~~~and~~~~~~~} x^\alpha=x_1^{\alpha_1}\cdots x_d^{\alpha_d} .$$ Define $\|g\|_{W^\beta_2(\mathbb{R}^d)}'= \left(\sum_{|\alpha|\leq \beta}\|D^\alpha g\|_{L_2(\RR^d)}^2\right)^{\frac{1}{2}}$. It can be shown that $\|\cdot\|_{W^\beta_2(\mathbb{R}^d)}'$ and $\|\cdot\|_{W^\beta_2(\mathbb{R}^d)}$ are equivalent for $\beta\in \mathbb{N}$ \citep{adams2003sobolev}.
\end{remark}

The classic framework on the error analysis for radial basis function approximation employs the reproducing kernel Hilbert spaces (RKHS, see Section \ref{sec:RKHS} for more details) as a necessary mathematical tool. The development of \cite{wang2019prediction} relies on these classic results. These results, however, are not applicable in the current context when $f_\Psi/f_\Phi$ is not uniformly bounded.

The current research is partially inspired by the ``escape theorems'' for radial basis function approximation established  by \cite{brownlee2004approximation,narcowich2005sobolev,narcowich2005recent,narcowich2002scattered,narcowich2004scattered,narcowich2006sobolev}. These works show that, some radial basis functions interpolants still provide effective approximation, even if the underlying functions are too rough to lie in the corresponding RKHS.

Our results on interpolation of Gaussian processes with oversmoothed kernels are based on an escape theorem, given by Lemma \ref{th:escape}. Given Condition \ref{C3}, it is known that the RKHS generated by $\Phi$ is equivalent to $W_2^{\nu+d/2}(\mathbb{R}^d)$ (see Lemma \ref{lemEquivalence} in Section \ref{sec:RKHS}), which is a proper subset of $W_2^{\nu_0+d/2}(\mathbb{R}^d)$ when $\nu_0<\nu$. Lemma \ref{th:escape} shows that the radial basis function approximation may still give reasonable error bounds even if the underlying function does not lie in the RKHS.

\begin{lemma}\label{th:escape}
	Let $\Phi$ be a kernel with a spectral density $f_\Phi$ satisfying Condition \ref{C3},
	and $g$ be a function in $ W_2^{\nu_0+d/2}(\mathbb{R}^d)$ with $\nu\geq \nu_0>0$.
	Suppose $\Omega\subset \mathbb{R}^d$ is a domain satisfying Condition \ref{C4}. Then there exist constants $C>0$ and $h_0\in(0,1]$ such that for any design $X$ with $h_{X,\Omega}\leq h_0$, we have
	\begin{eqnarray}\label{infnorm}
	\sup_{x\in\Omega}|g(x)-\mathcal{I}_{\Phi,X}g(x)|\leq C h_{X,\Omega}^{\nu_0}\rho^{\nu-\nu_0}_{X,\Omega}\|g\|_{W_2^{\nu_0+d/2}(\mathbb{R}^d)}.
	\end{eqnarray}
	Here the constant $C$ depends only on $\Omega$, $\Phi$ and $\Psi$, including $\nu$ and $\nu_0$.
\end{lemma}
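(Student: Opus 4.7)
The plan is to adapt the band-limited-approximation strategy developed by Narcowich, Ward, and Wendland for escape theorems in scattered-data approximation. I fix a smooth radial frequency cutoff $\chi_\sigma$ with $\chi_\sigma\equiv 1$ on $\{\|\omega\|\leq\sigma\}$ and $\mathrm{supp}(\chi_\sigma)\subset\{\|\omega\|\leq 2\sigma\}$, and set $g_\sigma:=\mathcal{F}^{-1}(\chi_\sigma\hat{g})$. The scale $\sigma$ will be calibrated as $\sigma=\gamma/q_{X,\Omega}$ at the end. Because $\hat{g}_\sigma$ is compactly supported, $g_\sigma$ lies in every Sobolev space, and in particular in the native space $\mathcal{N}_\Phi$, which Condition~\ref{C3} and Lemma~\ref{lemEquivalence} make norm-equivalent to $W_2^{\nu+d/2}(\mathbb{R}^d)$. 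A Cauchy--Schwarz argument in the frequency domain gives the baseline bounds
\begin{align*}
\|g-g_\sigma\|_{L_\infty(\mathbb{R}^d)}&\leq C_1\sigma^{-\nu_0}\|g\|_{W_2^{\nu_0+d/2}(\mathbb{R}^d)},\\
\|g_\sigma\|_{W_2^{\nu+d/2}(\mathbb{R}^d)}&\leq C_2\sigma^{\nu-\nu_0}\|g\|_{W_2^{\nu_0+d/2}(\mathbb{R}^d)},
\end{align*}
and I split the interpolation error as $g-\mathcal{I}_{\Phi,X}g=(g-g_\sigma)+(g_\sigma-\mathcal{I}_{\Phi,X}g_\sigma)-\mathcal{I}_{\Phi,X}(g-g_\sigma)$.

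The first term is handled by the first baseline bound: after calibration it is $\leq C q_{X,\Omega}^{\nu_0}\|g\|\leq C h_{X,\Omega}^{\nu_0}\rho_{X,\Omega}^{\nu-\nu_0}\|g\|$ since $q_{X,\Omega}\leq h_{X,\Omega}\leq 1$ and $\rho_{X,\Omega}\geq 1$. The second term is the matched-smoothness RBF case: $g_\sigma\in\mathcal{N}_\Phi$, so the classical native-space error bound (Wendland~\cite{wendland2004scattered}, Thm.~11.22) gives $\|g_\sigma-\mathcal{I}_{\Phi,X}g_\sigma\|_{L_\infty(\Omega)}\leq C h_{X,\Omega}^\nu\|g_\sigma\|_{\mathcal{N}_\Phi}$, and combining with the second baseline bound yields $\leq C h_{X,\Omega}^\nu\sigma^{\nu-\nu_0}\|g\|=C h_{X,\Omega}^{\nu_0}\rho_{X,\Omega}^{\nu-\nu_0}\|g\|$ under the calibration. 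Up to this point the computation is essentially the bookkeeping of frequency truncation against matched RBF theory.

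The main obstacle is the third term $\mathcal{I}_{\Phi,X}(g-g_\sigma)$, which falls outside the native-space theory since $g-g_\sigma\notin\mathcal{N}_\Phi$. I would control it by combining (i) the reproducing-kernel bound $|s(x)|\leq\Phi(0)^{1/2}\|s\|_{\mathcal{N}_\Phi}$ applied to $s=\mathcal{I}_{\Phi,X}(g-g_\sigma)\in V_X:=\mathrm{span}\{\Phi(\cdot-x_j)\}\subset\mathcal{N}_\Phi$, (ii) the minimum-norm identity $\|\mathcal{I}_{\Phi,X}w\|_{\mathcal{N}_\Phi}^2=w_X^TK_\Phi^{-1}w_X$, and (iii) the Narcowich--Ward--Wendland Bernstein-type inverse inequality $\|s\|_{W_2^{\nu+d/2}}\leq C q_{X,\Omega}^{-(\nu-\nu_0)}\|s\|_{W_2^{\nu_0+d/2}}$ for $s\in V_X$. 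The delicate point is to avoid the naive combination of $\lambda_{\min}(K_\Phi)\gtrsim q_{X,\Omega}^{2\nu}$ with the crude inequality $\|w_X\|_{\ell_2}^2\leq n\|w\|_{L_\infty}^2$, which introduces a spurious $q_{X,\Omega}^{-d/2}$ factor: it is the Bernstein inequality that trades a power of $q_{X,\Omega}$ against the weaker Sobolev norm and yields precisely the $\rho_{X,\Omega}^{\nu-\nu_0}$ factor of the target estimate. Once all three terms are bounded by $C h_{X,\Omega}^{\nu_0}\rho_{X,\Omega}^{\nu-\nu_0}\|g\|_{W_2^{\nu_0+d/2}(\mathbb{R}^d)}$, fixing $\gamma$ to balance the constants from the Bernstein and baseline bounds and restricting $h_{X,\Omega}\leq h_0$ so that the sampling and Bernstein inequalities hold with constants determined by Condition~\ref{C4}, the claim follows; in the matched case $\nu=\nu_0$ the factor $\rho_{X,\Omega}^{\nu-\nu_0}$ equals $1$ and one recovers the classical rate $h_{X,\Omega}^{\nu_0}$.
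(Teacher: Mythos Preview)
Your strategy is close in spirit to the paper's, but there is a genuine gap in your handling of the third term $\mathcal{I}_{\Phi,X}(g-g_\sigma)$. After combining (i) and (iii) you arrive at
\[
\|\mathcal{I}_{\Phi,X}(g-g_\sigma)\|_{L_\infty}\;\leq\; C\,q_X^{-(\nu-\nu_0)}\,\|\mathcal{I}_{\Phi,X}(g-g_\sigma)\|_{W_2^{\nu_0+d/2}(\mathbb{R}^d)},
\]
and the argument stops there: you never explain how to bound the right-hand side. Item (ii) gives a formula for $\|s\|_{\mathcal{N}_\Phi}$, not a bound, and there is no stability estimate of the form $\|\mathcal{I}_{\Phi,X}w\|_{W_2^{\nu_0+d/2}}\lesssim \|w\|_{W_2^{\nu_0+d/2}}$ available---indeed, such a bound is essentially the escape theorem you are trying to prove. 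The Bernstein inequality trades one Sobolev norm of $s$ for another, but it does not by itself connect either of them to any norm of the data $g-g_\sigma$. So as written the three ingredients (i)--(iii) do not close.

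The paper avoids this obstacle entirely by replacing your frequency cutoff $g_\sigma$ with the Narcowich--Ward--Wendland band-limited approximant $g_\gamma$ of Lemma~\ref{lem:bandlimited} (their Theorem~3.4), whose defining feature is that it \emph{interpolates} $g$ on $X$: $g_\gamma|_X=g|_X$. This single property forces $\mathcal{I}_{\Phi,X}g=\mathcal{I}_{\Phi,X}g_\gamma$ (and likewise with $\Psi$), so your problematic third term vanishes identically, and the decomposition collapses to three terms each governed by the matched-smoothness sampling inequality~(\ref{samplingineq}). The norm bounds $\|g_\gamma\|_{W_2^{\nu+d/2}}\leq C q_X^{-(\nu-\nu_0)}\|g\|_{W_2^{\nu_0+d/2}}$ and $\|g_\gamma\|_{W_2^{\nu_0+d/2}}\leq C\|g\|_{W_2^{\nu_0+d/2}}$ that Lemma~\ref{lem:bandlimited} supplies are exactly your baseline bounds, so all the bookkeeping you did for the first two terms carries over unchanged. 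In short: the missing idea is to make the band-limited surrogate \emph{interpolate} on $X$, not merely approximate $g$ in $L_\infty$; this is precisely what the more elaborate NWW construction buys over a plain frequency cutoff.
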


Theorem 4.2 of \cite{narcowich2006sobolev} states that under the conditions of Lemma \ref{th:escape} in addition to
\begin{eqnarray}\label{floor}
\lfloor \beta\rfloor>d/2,
\end{eqnarray}
we have
\begin{eqnarray}\label{2norm}
\|g-\mathcal{I}_{\Phi,X}g\|_{W^\mu_2(\Omega)}\leq C h^{\beta-\mu}_{X,\Omega}\rho^{\tau-\beta}_{X,\Omega}\|g\|_{W^\beta_2(\mathbb{R}^d)},
\end{eqnarray}
for $0\leq \mu\leq \beta$. As commented by a reviewer, condition (\ref{floor}) can be removed by using Theorem 4.1 of \cite{arcangeli2007extension} in the proof of Theorem 4.2 of \cite{narcowich2006sobolev}; also see Theorem 10 of \cite{wynne2020convergence}.
Having (\ref{2norm}), Lemma \ref{th:escape} is an immediate consequence. Specifically, combining (\ref{2norm}) with the real interpolation theory for Sobolev spaces (See, e.g., Theorem 5.8 and Chapter 7 of \cite{adams2003sobolev}), yields (\ref{infnorm}). An alterative proof of Lemma \ref{th:escape}, also suggested by a reviewer, is given in Section \ref{sec:interpolationRHKS}.

Next we make a remark on the notion of smoothness and the settings of smoothness misspecification. For a deterministic function $g$, we say $g$ has smoothness $\nu_0+d/2$ if $g\in W^{\nu_0+d/2}(\Omega)$. The smoothness misspecification in Lemma \ref{th:escape} is stated as: the smoothness associated with the RKHS is higher than the true smoothness of the function when $\nu_0<\nu$.

Now we turn to the role of $\nu_0$ for a stationary Gaussian process $Z(x)$ with spectral density $f_\Psi$ satisfying Condition \ref{C1}. Unlike the usual perception on the smoothness of deterministic functions, here $\nu_0$ should be interpreted as the mean squared differentiability \citep{stein2012interpolation} of the Gaussian process, which is related to the smoothness of the correlation function $\Psi$.

On the other hand, we can also consider the smoothness of sample paths of $Z(x)$, under the usual definition of smoothness for deterministic functions. It turns out that the sample path smoothness is lower than $\nu_0$ with probability one \citep{driscoll1973reproducing,steinwart2019convergence,kanagawa2018gaussian}. In view of this, Theorem \ref{Th:main} implies that the sample paths of Gaussian processes can escape the $d/2$ smoothness misspecification in terms of the $L_\infty$ norm, disregarding the logarithmic factor. In other words, there exist functions with smoothness less than $\nu_0$ that can be approximated at the rate $O(n^{-\nu_0/d}\sqrt{\log n})$, and the set of such functions is large under the probability measure of a certain Gaussian process.

\section{Simulation studies}\label{sec:simulation}
The objective of this section is to verify whether the rate of convergence given by Corollary \ref{coro:rate} is accurate.
We consider the settings in Example \ref{examplerdsample}. We have shown that under a random sampling over the experimental region $\Omega = [0,1]$, the kriging prediction error has the rate $O_{\mathbb{P}}(n^{\nu-2\nu_0}\log^{\nu+1/2} n)$ for $\nu > \nu_0$.
If grid sampling is used, Corollaries \ref{coro:rate} and \ref{coro:quasirate} show that the error has the order of magnitude $n^{-\nu_0}\log^{1/2} n$ for $\nu > \nu_0$.

We denote the expectation of (\ref{krigingerror}) with random sampling and grid sampling by $\mathcal{E}_{\text{rand}}$ and $\mathcal{E}_{\text{grid}}$, respectively. Our idea of assessing the rate of convergence is described as follows. If the error rates are sharp, we have the approximations
\begin{align*}
\log \mathcal{E}_{\text{rand}}&\approx
   (\nu-2\nu_0) \log n + (\nu+\frac{1}{2})\log \log n + \log c_1,\\
\log \mathcal{E}_{\text{grid}}&\approx -\nu_0\log n + \frac{1}{2}\log \log n +\log c_2,
\end{align*}
for random samplings and grid samplings, respectively, where $c_1,c_2$ are constants. Since $\log \log n$ grows much slower than $\log n$, we can regard the $\log \log n$ term as a constant and get the second approximations
\begin{align}
\log \mathcal{E}_{\text{rand}}&\approx
   (2\nu_0-\nu) \log (1/n) + C_1,\label{simu1}\\
\log \mathcal{E}_{\text{grid}}&\approx \nu_0\log (1/n) +C_2.\label{simu2}
\end{align}
To verify the above formulas via numerical simulations, we can regress $\log \mathcal{E}_{\text{rand}}$ and $\log \mathcal{E}_{\text{grid}}$ on $\log (1/n)$ and examine the estimated slopes. If the bounds are sharp, the estimated slopes should be close to the theoretical assertions $2\nu_0-\nu$ and $\nu_0$, respectively.

In our simulation studies, we consider the sample sizes $n=10k$, for $k=2,3,...,15$. For each $k$, we simulate 100 realizations of a Gaussian process. For a specific realization of a Gaussian process, we generate $10k$ independent and uniformly distributed random points as $X$, and use $\sup_{x \in \Omega_1}|Z(x)-\mathcal{I}_{\Phi,X}Z(x)|$ to approximate the uniform error $\sup_{x \in \Omega}|Z(x)-\mathcal{I}_{\Phi,X}Z(x)|$, where $\Omega_1$ is the first $200$ points of the Halton sequence \citep{niederreiter1992random}. We believe that the points are dense enough so that the approximation can be accurate. Then the regression coefficient is estimated using the least squares method. For grid sampling, we adopt a similar approach with the same number of design points $X$. The results are presented in Table \ref{Tab:simuResults}. The first two columns of Table \ref{Tab:simuResults} show the true and imposed smoothness parameters of the Mat\'ern correlation functions. The fourth and the fifth columns show the convergence rates obtained from the simulation studies and the theoretical analysis, respectively. The sixth column shows the relative difference between the fourth and the fifth columns, given by
$|\text{estimated slope-theoretical slope}|/(\text{theoretical slope}).$ The last column gives the $R$-squared values of the linear regression of the simulated data.

\begin{table}[h]
\centering
\begin{tabular}{|c|c|c|c|c|c|c|}
\hline
$\nu_0$ & $\nu$ & Design &  ES & TS & RD & $R^2$\\
\hline
\multirow{2}{*}{1.1} & \multirow{2}{*}{1.3} & RS & 0.9011 & 0.9 & 0.0012 &0.8579\\
\cline{3-7}
& & GS  & 1.0670 & 1.1 & 0.0300 & 0.9992\\
\hline
\multirow{2}{*}{1.1} & \multirow{2}{*}{2.8} & RS & 0.1653 & -0.6 (No convergence) & - & 0.0308\\
\cline{3-7}
& & GS  & 1.0968 & 1.1 & 0.0030 & 0.9995\\
\hline
\multirow{2}{*}{2.1} & \multirow{2}{*}{2.8} & RS & 1.523 & 1.4 & 0.088 & 0.9834\\
\cline{3-7}
& & GS  & 2.0953 & 2.1 & 0.0022 & 0.9992\\
\hline
\multirow{2}{*}{1.5} & \multirow{2}{*}{3.5} & RS & 0.1083 & -0.5 (No convergence) & - & 0.0991\\
\cline{3-7}
& & GS  & 1.4982 & 1.5 & 0.0012 & 0.9989\\
\hline
\end{tabular}
\caption{{\rm Numerical studies on the convergence rates of kriging prediction with oversmoothed correlation functions. The following abbreviations are used: RS=Random sampling, GS= Grid sampling, ES=Estimated slope, TS=Theoretical slope, RD=relative difference. The relative differences are not computed when the corresponding theoretical slopes are negative.}}
\label{Tab:simuResults}
\end{table}

In the setting of Rows 2, 3, 5-7 and 9 of Table \ref{Tab:simuResults}, our theory suggests the prediction consistency, i.e., $h^{\nu_0}_n\rho_n^{\nu-\nu_0}$ tends to zero. It can be seen that the estimated slopes coincide with our theoretical assertions for these cases. Also, the $R$-squared values for these rows are high, which implies a good model fitting of (\ref{simu1})-(\ref{simu2}). When $h^{\nu_0}_n\rho_n^{\nu-\nu_0}$ goes to infinity,
our simulation results suggest a very slow rate of convergence. Specifically, under the random sampling scheme and $(\nu_0,\nu)=(1.1,2.8)$ and $(\nu_0,\nu)=(1.5,3.5)$, the estimated rates of convergence are $0.1653$ and $0.1083$, respectively. Also, the $R$-squared values are very low. These slow rates and poor model fitting imply that the kriging predictor could be inconsistent. Figure \ref{fig:nu0big} shows the scattered plots of the raw data and the regression lines under the four combinations of $(\nu_0,\nu)$ in Table \ref{Tab:simuResults}.

\begin{figure}[h!]
    \centering
    \begin{subfigure}
        \centering
        \includegraphics[height=2.3in]{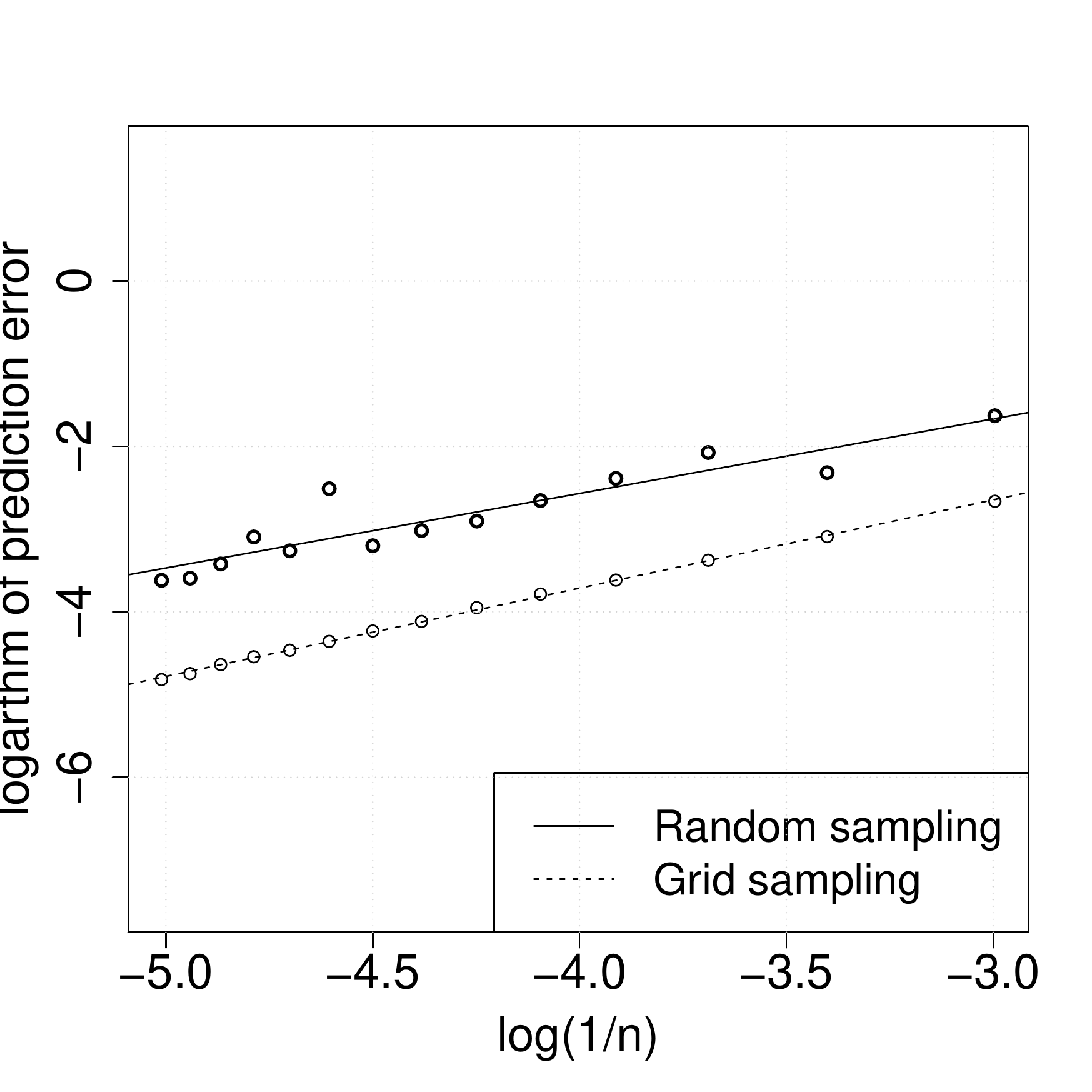}
    \end{subfigure}
    \begin{subfigure}
        \centering
        \includegraphics[height=2.3in]{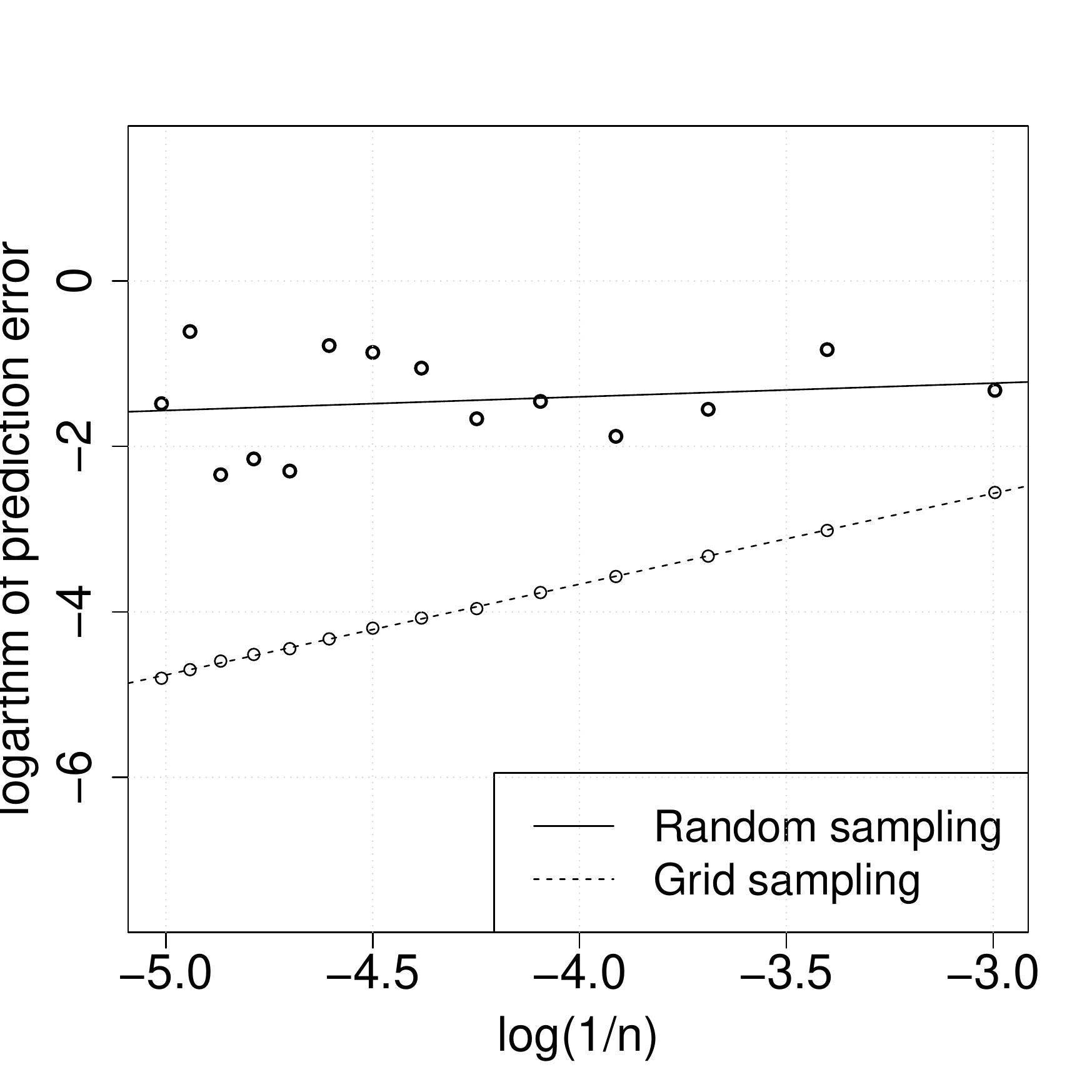}
    \end{subfigure}
    \begin{subfigure}
        \centering
        \includegraphics[height=2.3in]{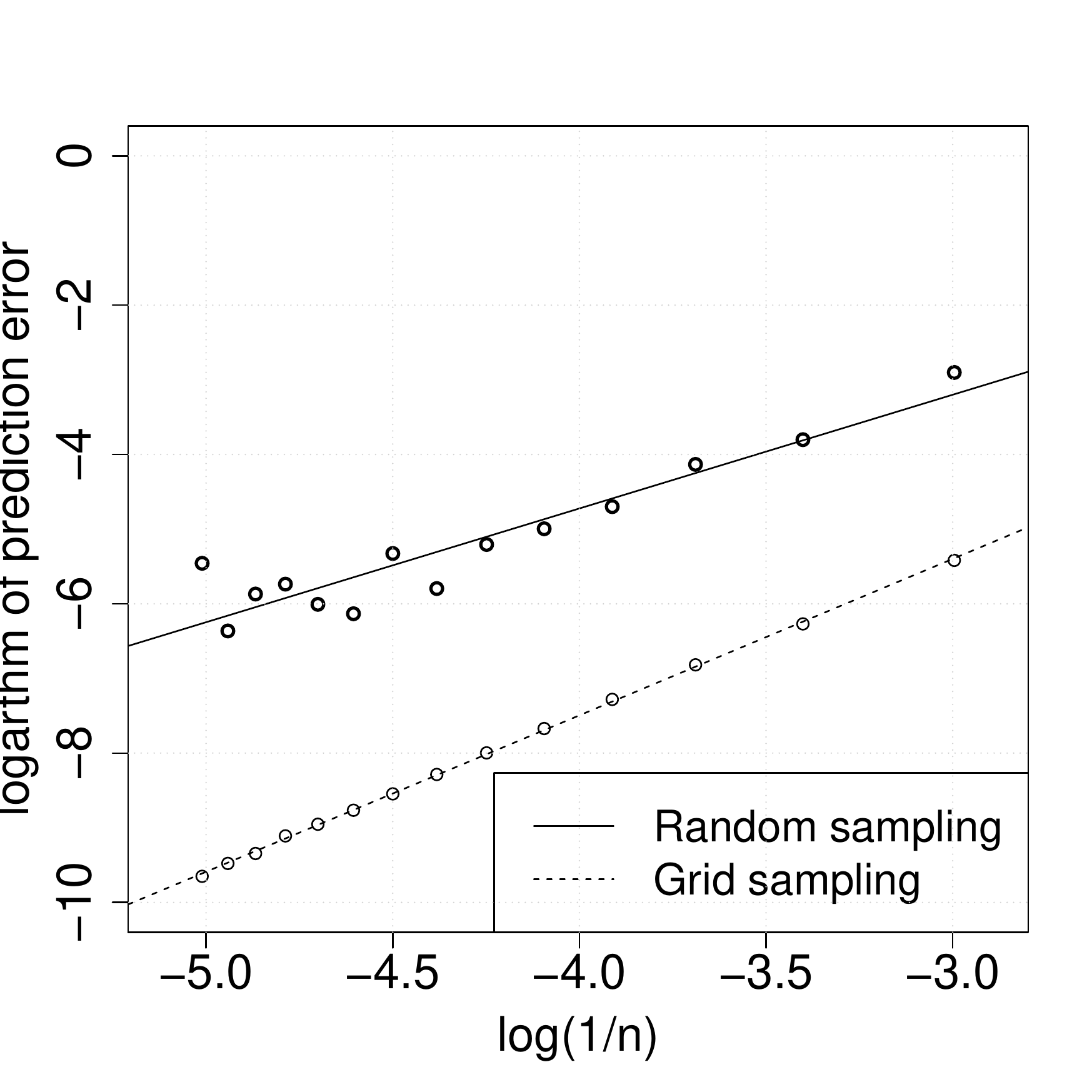}
    \end{subfigure}
    \begin{subfigure}
        \centering
        \includegraphics[height=2.3in]{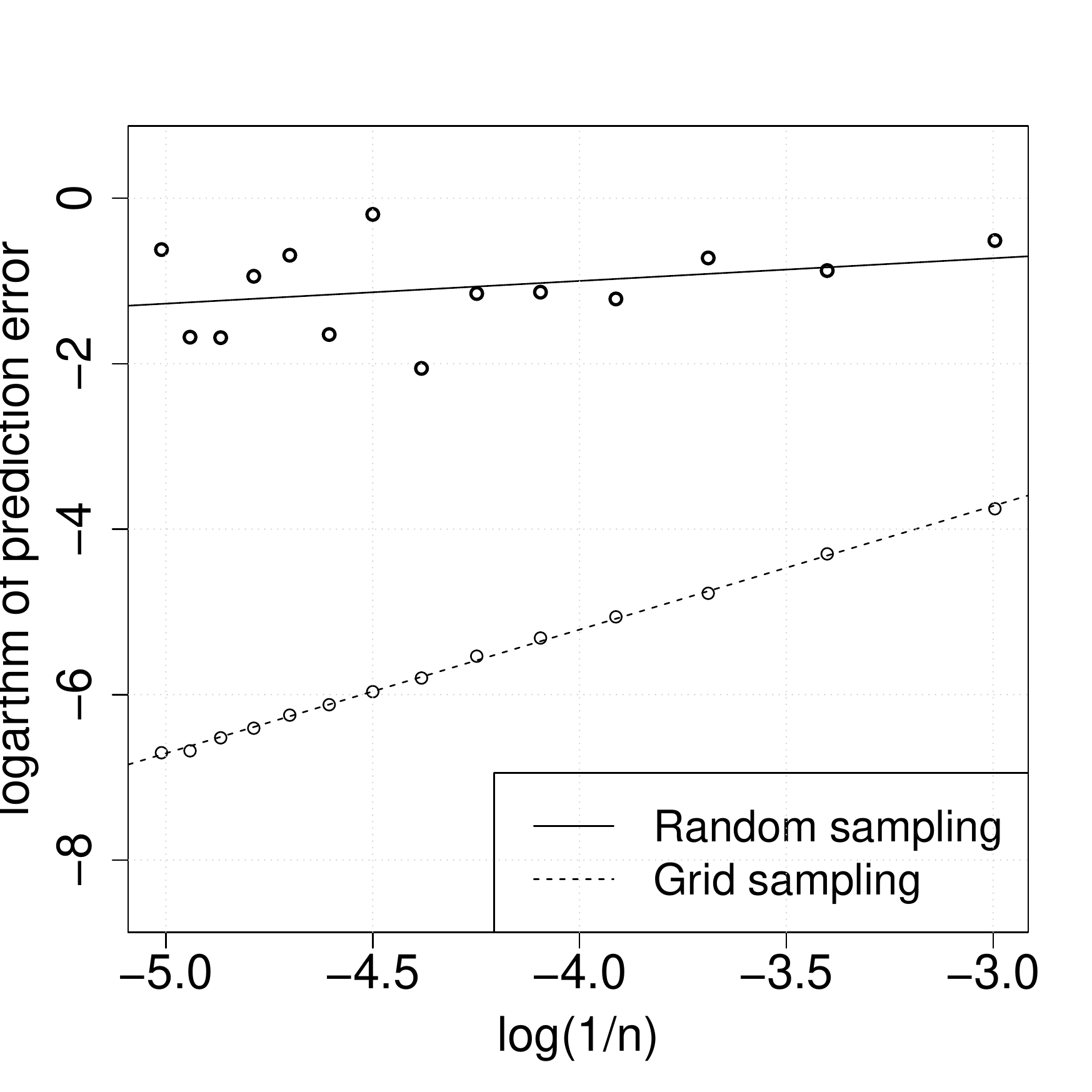}
    \end{subfigure}
   \caption{{\rm The regression line of $\log \mathcal{E}_{\text{unif}}$ and $\log \mathcal{E}_{\text{grid}}$ on $\log(1/n)$, under the four combinations of $(\nu_0,\nu)$ in Table \ref{Tab:simuResults}.
   Each point denotes one average prediction error for each $n$.}}
    \label{fig:nu0big}
\end{figure}

\section{Concluding remarks}

The error bounds presented in this work are not only valuable in mathematics. They can also provide guidelines for practitioners of kriging. Especially, our work confirms the importance of the design of experiments for kriging: if the design is quasi-uniform, the use of an oversmoothed correlation would not be an issue.

It has been known for a while that using quasi-uniform sampling is helpful for deterministic function approximation.
From an approximation theory perspective, one of the main contributions of this work is the discovery that sample paths of Gaussian processes escapes the $d/2$ smoothness misspecification (in the scattered data approximation sense \citep{kanagawa2018gaussian}).

As a final remark, we compare the rates in this work with the ones in radial basis function approximation \citep{edmunds2008function,wendland2004scattered}. For the radial basis function approximation problems, we adopt the standard framework so that the underlying function lies in the reproducing kernel Hilbert space generated by the correlation function. For the $L_\infty$ norm, the obtained optimal rate of convergence for kriging is $O_\mathbb{P}(n^{-\nu_0/d}\sqrt{\log n})$; while that for the radial basis function approximation is $O(n^{-\nu_0/d})$. So there is a difference in the $\sqrt{\log n}$ factor. For $L_p$ norms with $1\leq p<\infty$, the difference is more dramatic. While the optimal rate of convergence for kriging is $O_\mathbb{P}(n^{-\nu_0/d})$, that for radial basis function approximation is $O(n^{-\nu_0/d-\min(1/2,1/p)})$. This gap between the optimal rates can be explained, as the support of a Gaussian process is essentially larger than the corresponding reproducing kernel Hilbert space \citep{van2008reproducing}.

\section{Proofs}\label{sec:proof}

This section comprises our technical proofs. The proofs rely on some results in scattered data approximation of functions in reproducing kernel Hilbert spaces. We introduce these results in Section \ref{sec:RKHS}. The proofs of the theorems in Sections \ref{sec:mainLinfty} and \ref{sec:bounds} are given in Sections \ref{sec:proofLinfty} and \ref{sec:proofLp}, respectively.

Before introducing the details, we first note that in the proofs of all results in Sections \ref{sec:mainLinfty} and \ref{sec:bounds}, it suffices to consider only the case with $\sigma^2=1$. This should not affect the general result because otherwise we can consider the Gaussian process $Z/\sigma$ instead of $Z$.
Thus for notational simplicity, we assume $\sigma^2=1$ throughout this section.

\subsection{Reproducing kernel Hilbert spaces and scattered data approximation}\label{sec:RKHS}

We adopt one reviewer's suggestions to prove our main results using techniques from reproducing kernel Hilbert spaces and recent developments in scattered data approximation, in lieu of our original technique of Fourier transform calculations in the previous version. The current treatment can streamline the proofs, and better show how the intermediate quantities toward the error analysis for Gaussian process regression are linked to those studied in scattered data approximation. Reproducing kernel Hilbert spaces is a common mathematical tool in Gaussian processes and scattered data approximation.

\begin{defn}
    Given a positive definite kernel $K(\cdot)$, the reproducing kernel Hilbert space (RKHS) $\mathcal{N}_K(\mathbb{R}^d)$ is defined as the completion of the function space
$$\left\{\sum_{j=1}^N\beta_j K(\cdot-x_j):N\in\mathbb{N},\beta_j\in\mathbb{R},x_j\in\mathbb{R}^d\right\} $$
under the inner product
\begin{align}\label{RKHSinnerproduct}
 \left\langle\sum_{j=1}^N\beta_j K(\cdot-x_j),\sum_{k=1}^{N'}\beta'_k K(\cdot-x'_k)\right\rangle_K=\sum_{j=1}^N\sum_{k=1}^{N'}\beta_j\beta'_k K(x_j-x'_k).
\end{align}
Denote the RKHS norm by $\|\cdot\|_K$.
\end{defn}

\subsubsection{Interpolation in RKHSs}\label{sec:interpolationRHKS}

We first consider the interpolation of a function $f\in\mathcal{N}_\Psi(\mathbb{R}^d)$, by $\mathcal{I}_{\Psi,X}f$. We have the following known results. Lemmas \ref{lemRKHSnorms} and \ref{lemEquivalence} are Corollaries 10.25 and 10.48 of \cite{wendland2004scattered}, respectively.

\begin{lemma}\label{lemRKHSnorms}
For any $f\in\mathcal{N}_\Psi(\mathbb{R}^d)$,
$\|f-\mathcal{I}_{\Psi,X}f\|_{\Psi}\leq \|f\|_\Psi.$
\end{lemma}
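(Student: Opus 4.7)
The plan is to realize $\mathcal{I}_{\Psi,X}f$ as the orthogonal projection of $f$ onto the finite-dimensional subspace
\[
V_X:=\mathrm{span}\{\Psi(\cdot-x_j):j=1,\ldots,n\}\subset \mathcal{N}_\Psi(\mathbb{R}^d),
\]
and then apply the Pythagorean identity. First I would use the reproducing property of $\Psi$, namely $\langle g,\Psi(\cdot-x)\rangle_\Psi=g(x)$ for every $g\in\mathcal{N}_\Psi(\mathbb{R}^d)$ and every $x\in\mathbb{R}^d$, which follows directly from the definition of the RKHS inner product in \eqref{RKHSinnerproduct} by taking one side to be $\Psi(\cdot-x)=1\cdot\Psi(\cdot-x)$ and passing to the completion.

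Second, I would verify the two defining properties of the projection: (i) $\mathcal{I}_{\Psi,X}f\in V_X$, which is immediate from the formula $\mathcal{I}_{\Psi,X}f=r_\Psi^T(\cdot)K_\Psi^{-1}Y$, since this is a linear combination of the $\Psi(\cdot-x_j)$'s; and (ii) $f-\mathcal{I}_{\Psi,X}f\perp V_X$. For (ii) it suffices to check orthogonality against each generator $\Psi(\cdot-x_k)$, and by the reproducing property this reduces to
\[
\langle f-\mathcal{I}_{\Psi,X}f,\Psi(\cdot-x_k)\rangle_\Psi=f(x_k)-\mathcal{I}_{\Psi,X}f(x_k),
\]
so I need the interpolation identity $\mathcal{I}_{\Psi,X}f(x_k)=f(x_k)$. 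The latter is a one-line linear algebra check: $r_\Psi(x_k)$ equals the $k$-th column of $K_\Psi$, hence $r_\Psi^T(x_k)K_\Psi^{-1}=e_k^T$, and so $r_\Psi^T(x_k)K_\Psi^{-1}Y=Y_k=f(x_k)$.

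Third, having established the orthogonal decomposition $f=\mathcal{I}_{\Psi,X}f+(f-\mathcal{I}_{\Psi,X}f)$ with the two summands orthogonal in $\langle\cdot,\cdot\rangle_\Psi$, Pythagoras gives
\[
\|f\|_\Psi^2=\|\mathcal{I}_{\Psi,X}f\|_\Psi^2+\|f-\mathcal{I}_{\Psi,X}f\|_\Psi^2\geq \|f-\mathcal{I}_{\Psi,X}f\|_\Psi^2,
\]
which is the claimed inequality.

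The main obstacle, if any, is a foundational one rather than a computational one: one needs to be careful that $f(x_k)$ is well defined (the reproducing property guarantees pointwise evaluation is continuous on $\mathcal{N}_\Psi(\mathbb{R}^d)$, and the definition of $Y$ in \eqref{interpolant} is thereby meaningful for any $f\in\mathcal{N}_\Psi$, not only for finite linear combinations of translates), and that $K_\Psi$ is invertible so that $\mathcal{I}_{\Psi,X}f$ is well defined; the invertibility follows from $\Psi$ being positive definite (Condition \ref{C1} forces $f_\Psi>0$, hence positive definiteness via Bochner). Once these points are acknowledged, the proof is essentially the standard projection argument above.
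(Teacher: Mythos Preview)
Your argument is correct and is exactly the standard orthogonal-projection proof underlying the result. The paper does not give its own proof of this lemma but simply cites it as Corollary~10.25 of \cite{wendland2004scattered}, whose proof is precisely the one you outline (interpolation property $\Rightarrow$ orthogonality of the residual to $V_X$ $\Rightarrow$ Pythagoras).
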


\begin{lemma}\label{lemEquivalence}
Under Condition \ref{C1}, $\mathcal{N}_\Psi(\mathbb{R}^d)=W^{\nu_0+d/2}_2(\mathbb{R}^d)$ with equivalent norms.
\end{lemma}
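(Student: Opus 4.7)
The plan is to prove the norm equivalence via the Fourier-transform characterization of the native space (RKHS) of a translation-invariant positive definite kernel, and then invoke Condition \ref{C1} to match the resulting weighted $L_2$ norm with the Sobolev norm.

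First, I would establish the spectral representation of the inner product \eqref{RKHSinnerproduct} on the pre-Hilbert space of finite linear combinations $f=\sum_j\beta_j\Psi(\cdot-x_j)$. Using Bochner's representation \eqref{Bochner}, one has $\Psi(x_j-x_k)=\int_{\RR^d} e^{i\omega^T(x_j-x_k)}f_\Psi(\omega)d\omega$, so that
\[
\langle f,g\rangle_\Psi=\sum_{j,k}\beta_j\beta'_k\Psi(x_j-x'_k)=\int_{\RR^d}\Bigl(\sum_j\beta_je^{i\omega^T x_j}\Bigr)\overline{\Bigl(\sum_k\beta'_ke^{i\omega^T x'_k}\Bigr)}f_\Psi(\omega)d\omega.
\]
Since Condition \ref{C1} makes $f_\Psi$ strictly positive, integrable, and continuous, the standard Fourier identification shows that the map $f\mapsto\hat f/((2\pi)^d f_\Psi^{1/2})$ (with the Fourier convention fixed in Section \ref{sec:escape}) embeds the pre-Hilbert space isometrically, up to a constant, into $L_2(\RR^d)$ with the weight $1/f_\Psi$. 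Completing both sides, the native space $\mathcal{N}_\Psi(\RR^d)$ is identified with
\[
\Bigl\{f\in L_2(\RR^d):\int_{\RR^d}\frac{|\hat f(\omega)|^2}{f_\Psi(\omega)}\,d\omega<\infty\Bigr\},
\]
and the inner product equals $(2\pi)^{-d}\int\hat f\overline{\hat g}/f_\Psi\,d\omega$ up to a normalization independent of $f,g$.

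Next, Condition \ref{C1} gives the two-sided pointwise bound
\[
\frac{1}{c_2}(1+\|\omega\|^2)^{\nu_0+d/2}\;\leq\;\frac{1}{f_\Psi(\omega)}\;\leq\;\frac{1}{c_1}(1+\|\omega\|^2)^{\nu_0+d/2}.
\]
Multiplying by $|\hat f(\omega)|^2$ and integrating yields
\[
\frac{1}{c_2}\|f\|_{W_2^{\nu_0+d/2}(\RR^d)}^2\;\leq\;\int_{\RR^d}\frac{|\hat f(\omega)|^2}{f_\Psi(\omega)}\,d\omega\;\leq\;\frac{1}{c_1}\|f\|_{W_2^{\nu_0+d/2}(\RR^d)}^2,
\]
so the Fourier characterization of $\mathcal{N}_\Psi(\RR^d)$ coincides as a set with $W_2^{\nu_0+d/2}(\RR^d)$, and the two norms are equivalent up to constants $(2\pi)^{-d/2}c_j^{-1/2}$.

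Finally I would confirm that the completion procedure does not enlarge one space beyond the other. This follows because Schwartz functions lie in both spaces and are dense in $W_2^{\nu_0+d/2}(\RR^d)$; by the norm equivalence they are also dense in the Fourier characterization of $\mathcal{N}_\Psi(\RR^d)$, and one can check they are already contained in the closure of the finite-linear-combination pre-Hilbert space (e.g., by approximating $\hat f/f_\Psi$ in $L_2(f_\Psi d\omega)$ by trigonometric sums and transferring back). The only slightly delicate point is this last density step, since the abstract completion in the definition of $\mathcal{N}_\Psi$ yields an equivalence-class object whose concrete realization as functions must be justified; the standard argument is that point evaluation is continuous on the pre-Hilbert space under Condition \ref{C1}, so elements of the completion have canonical function representatives, which combined with the isometry into the weighted $L_2$ space yields the identification. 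This density/realization step is the main technical obstacle; once it is in place, the norm equivalence follows immediately from the pointwise sandwich on $f_\Psi$.
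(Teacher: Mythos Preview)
Your argument is correct and is essentially the standard proof (Fourier characterization of the native space via Theorem~10.12 in \cite{wendland2004scattered}, followed by the two-sided spectral bound from Condition~\ref{C1}). The paper does not give its own proof of this lemma at all: it simply cites Corollary~10.48 of \cite{wendland2004scattered}, whose proof proceeds exactly along the lines you outlined, so your proposal reproduces the content behind that citation rather than offering a different route.
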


A reviewer suggested an alternative proof of Lemma \ref{th:escape}, by leveraging the following Lemma \ref{lem:bandlimited} from \cite{narcowich2006sobolev}. It is worth presenting the proof here, because we will later employ Lemma \ref{lem:bandlimited} again.

\begin{lemma}[Theorem 3.4 of \cite{narcowich2006sobolev}] \label{lem:bandlimited}
Suppose $\nu\geq \nu_0>0$. Then for each $g\in W_2^{\nu_0+d/2}(\mathbb{R}^d)$, there exists $g_\gamma\in W_2^{\nu+d/2}(\mathbb{R}^d)$, so that $g|_X=g_\gamma|_X$ and
\begin{align*}
\|g_\gamma\|_{W^{\nu+d/2}_2(\mathbb{R}^d)}&\leq C q^{-(\nu-\nu_0)}_X \|g\|_{{W^{\nu_0+d/2}_2(\mathbb{R}^d)}},\\ \|g_\gamma\|_{W^{\nu_0+d/2}_2(\mathbb{R}^d)}&\leq C \|g\|_{W^{\nu_0+d/2}_2(\mathbb{R}^d)},
\end{align*}
for a constant $C$ depending only on $d$ and $\nu_0$.
\end{lemma}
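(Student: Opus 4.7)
The plan is to construct $g_\gamma$ explicitly as a \emph{band-limited} interpolant of $g$ on $X$, exploiting that every band-limited function lies in every Sobolev space $W_2^s(\mathbb{R}^d)$ while Bernstein's inequality converts higher Sobolev norms into powers of the bandwidth. Since the separation radius $q_X$ is the natural length scale of $X$, choosing the bandwidth $\sigma\asymp 1/q_X$ should produce precisely the factor $q_X^{-(\nu-\nu_0)}$ demanded by the lemma.

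Concretely, I would fix $\sigma=\gamma/q_X$ for a small constant $\gamma=\gamma(d)>0$ and work in the Paley--Wiener space $B_\sigma:=\{f\in L_2(\mathbb{R}^d):\mathrm{supp}\,\widehat{f}\subset B(0,\sigma)\}$, which is contained in $W_2^s(\mathbb{R}^d)$ for every $s\geq 0$. Provided $\gamma$ is sufficiently small (depending only on $d$), a Beurling--Landau type interpolation theorem for Paley--Wiener spaces guarantees that every $q_X$-separated set is a set of interpolation for $B_\sigma$, so there exists $g_\gamma\in B_\sigma$ satisfying $g_\gamma|_X=g|_X$; selecting the minimum-norm interpolant makes the choice canonical. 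A trace inequality on the separated set $X$ combined with a frame-type stability bound for this minimum-norm interpolation then yields
\[
\|g_\gamma\|_{W_2^{\nu_0+d/2}(\mathbb{R}^d)}\;\lesssim\;\Big(\sum_{x_j\in X} q_X^d\,|g(x_j)|^2\Big)^{1/2}\;\lesssim\;\|g\|_{W_2^{\nu_0+d/2}(\mathbb{R}^d)},
\]
with constants depending only on $d$ and $\nu_0$ (the Sobolev trace is valid because $\nu_0+d/2>d/2$). The second claim of the lemma then follows from Bernstein's inequality for band-limited functions:
\[
\|g_\gamma\|_{W_2^{\nu+d/2}(\mathbb{R}^d)}\;\lesssim\;\sigma^{\nu-\nu_0}\,\|g_\gamma\|_{W_2^{\nu_0+d/2}(\mathbb{R}^d)}\;\asymp\;q_X^{-(\nu-\nu_0)}\,\|g\|_{W_2^{\nu_0+d/2}(\mathbb{R}^d)}.
\]

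The main obstacle is producing the interpolation-plus-stability estimate uniformly over all $q_X$-separated sets, with constants depending only on $d$ and $\nu_0$. The Beurling--Landau machinery delivers existence of a band-limited interpolant, but converting this into a bound on $\|g_\gamma\|_{W_2^{\nu_0+d/2}}$ in terms of $\|g\|_{W_2^{\nu_0+d/2}}$ requires controlling sampling frame constants and correctly coupling a scattered-node trace inequality on $W_2^{\nu_0+d/2}$ with Sobolev reconstruction on $B_\sigma$. An alternative route that sidesteps the global Paley--Wiener theory is a local Whitney-type construction: partition $\mathbb{R}^d$ into cubes of side $\asymp q_X$, use local polynomial interpolation on each cube to obtain a piecewise smooth function that agrees with $g$ on $X$, glue via a smooth partition of unity, and convolve with a Schwartz bump at scale $q_X$; a Bramble--Hilbert estimate provides the $W_2^{\nu_0+d/2}$-stability, and mollification at scale $q_X$ lifts the smoothness to $W_2^{\nu+d/2}$ at precisely the stated cost $q_X^{-(\nu-\nu_0)}$.
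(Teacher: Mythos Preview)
The paper does not supply its own proof of this lemma; it is quoted directly as Theorem~3.4 of \cite{narcowich2006sobolev} and used as a black box. Your proposed construction---taking a band-limited interpolant in the Paley--Wiener space $B_\sigma$ with $\sigma\asymp 1/q_X$, controlling the $W_2^{\nu_0+d/2}$-norm via a scattered trace inequality together with frame-stability of the minimum-norm interpolant, and then applying Bernstein's inequality to pass to $W_2^{\nu+d/2}$---is exactly the strategy carried out in the original reference, so there is nothing to contrast.
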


\begin{proof}[Proof of Lemma \ref{th:escape}]
For each $g\in W_2^{\nu_0+d/2}(\mathbb{R}^d)$, let $g_\gamma$ be the function given in Lemma \ref{lem:bandlimited}. The condition $g|_X=g_\gamma|_X$ implies that $\mathcal{I}_{\Phi,X}g=\mathcal{I}_{\Phi,X}g_\gamma$ and $\mathcal{I}_{\Psi,X}g=\mathcal{I}_{\Psi,X}g_\gamma$.

Corollary 11.33 of \cite{wendland2004scattered} asserts that
\begin{align}\label{samplingineq}
\|f-\mathcal{I}_{\Phi,X}f\|_{L_\infty(\mathbb{R}^d)}\leq C h_{X,\Omega}^{\nu}\|f\|_{W^{\nu+d/2}_2(\mathbb{R}^d)}. \end{align}
Now by triangle inequality,
\begin{align*}
    \|g-\mathcal{I}_{\Phi,X}g\|_{L_\infty(\mathbb{R}^d)} \leq & \|g-\mathcal{I}_{\Psi,X}g\|_{L_\infty(\mathbb{R}^d)} + \|g_\gamma-\mathcal{I}_{\Psi,X}g_\gamma\|_{L_\infty(\mathbb{R}^d)} + \|g_\gamma-\mathcal{I}_{\Phi,X}g_\gamma\|_{L_\infty(\mathbb{R}^d)}\\
    \leq & C_1h_{X,\Omega}^{\nu_0}\left(\|g\|_{W_2^{\nu_0+d/2}(\mathbb{R}^d)} + \|g_\gamma\|_{W_2^{\nu_0+d/2}(\mathbb{R}^d)}\right) + C_2 h_{X,\Omega}^\nu \|g_\gamma\|_{W_2^{\nu+d/2}(\mathbb{R}^d)}\\
    \leq & C_3(h_{X,\Omega}^{\nu_0} + \rho_{X,\Omega}^{\nu-\nu_0}h_{X,\Omega}^{\nu_0})\|g\|_{W_2^{\nu_0+d/2}(\mathbb{R}^d)}\\
    \leq & C_4h_{X,\Omega}^{\nu_0}\rho_{X,\Omega}^{\nu-\nu_0}\|g\|_{W_2^{\nu_0+d/2}(\mathbb{R}^d)},
\end{align*}
where the second inequality follows from (\ref{samplingineq}) and an equivalent form of (\ref{samplingineq}) by replacing $\Phi$ with $\Psi$ and $\nu$ with $\nu_0$.
Hence the proof is completed.
\end{proof}

\subsubsection{Quasi-power functions}

Lemma \ref{lemGPRKHS} states a simple connection between Gaussian processes and RKHSs.

\begin{lemma}\label{lemGPRKHS}
Let $G(\cdot)$ be a stationary Gaussian process on $\Omega$ with a unit variance and a positive definite correlation function $K$. Then for $x_1,\ldots, x_N\in\Omega$ and $\beta_1,\ldots,\beta_N\in\mathbb{R}$,
\begin{align}
Var\left(\sum_{j=1}^N \beta_j G(x_j)\right)&=\left\|\sum_{j=1}^N \beta_j K(\cdot-x_j)\right\|^2_K\label{lemma:GPRKHS1}\\
&=\sup_{\|f\|_K\leq 1}\left|\sum_{j=1}^N\beta_j f(x_j)\right|^2\label{lemma:GPRKHS2}.
\end{align}
\end{lemma}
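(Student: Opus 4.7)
The plan is to prove the two equalities separately, both via straightforward calculations that exploit (i) the bilinearity of covariance and (ii) the reproducing property of the RKHS inner product.

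For the first equality, I would expand the variance by bilinearity:
\begin{equation*}
\mathrm{Var}\!\left(\sum_{j=1}^N \beta_j G(x_j)\right)=\sum_{j,k=1}^N \beta_j\beta_k\,\mathrm{Cov}(G(x_j),G(x_k))=\sum_{j,k=1}^N \beta_j\beta_k K(x_j-x_k),
\end{equation*}
using that $G$ has unit variance and correlation function $K$. On the other hand, by the very definition of the RKHS inner product given in \eqref{RKHSinnerproduct},
\begin{equation*}
\left\|\sum_{j=1}^N\beta_j K(\cdot-x_j)\right\|_K^2=\sum_{j,k=1}^N\beta_j\beta_k K(x_j-x_k),
\end{equation*}
so \eqref{lemma:GPRKHS1} follows immediately.

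For the second equality, I would use the reproducing property $f(x_j)=\langle f,K(\cdot-x_j)\rangle_K$ valid for every $f\in\mathcal{N}_K(\mathbb{R}^d)$. Writing $h:=\sum_{j=1}^N\beta_j K(\cdot-x_j)$, linearity of the inner product gives
\begin{equation*}
\sum_{j=1}^N\beta_j f(x_j)=\left\langle f,\,h\right\rangle_K.
\end{equation*}
The Cauchy--Schwarz inequality then yields $\left|\sum_j\beta_j f(x_j)\right|\leq \|f\|_K\,\|h\|_K$, so taking the supremum over $\|f\|_K\leq 1$ gives $\sup_{\|f\|_K\leq 1}|\sum_j\beta_j f(x_j)|\leq \|h\|_K$. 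Equality is attained by the choice $f^*=h/\|h\|_K$ when $h\neq 0$ (and both sides vanish trivially when $h=0$), which gives $\langle f^*,h\rangle_K=\|h\|_K$. Combined with \eqref{lemma:GPRKHS1}, this proves \eqref{lemma:GPRKHS2}.

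There is no real obstacle here — both steps are essentially unpacking definitions. The only minor care needed is to handle the degenerate case $h=0$ (equivalently $\sum_j\beta_j K(\cdot-x_j)\equiv 0$) separately in the second equality, and to cite the reproducing property and \eqref{RKHSinnerproduct} so that the argument is self-contained given the preceding definition of $\mathcal{N}_K(\mathbb{R}^d)$.
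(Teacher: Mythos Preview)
Your proposal is correct and matches the paper's approach. The paper's proof is even terser: it notes that \eqref{lemma:GPRKHS1} follows from direct calculation using \eqref{RKHSinnerproduct}, and for \eqref{lemma:GPRKHS2} it simply cites Lemma~3.9 of \cite{kanagawa2018gaussian} rather than spelling out the reproducing-property/Cauchy--Schwarz argument you give --- but that cited lemma is proved exactly as you do.
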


\begin{proof}
Equation (\ref{lemma:GPRKHS1}) follows from direct calculations using (\ref{RKHSinnerproduct}); equation (\ref{lemma:GPRKHS2}) is Lemma 3.9 of \cite{kanagawa2018gaussian}.
\end{proof}

Recall that a kriging interpolant is defined as $\mathcal{I}_{\Phi,X}Z(x)=r^T_\Phi(x)K_{\Phi}^{-1}Y$; see (\ref{interpolant}).
Lemma \ref{lemGPRKHS} will be employed by partially choosing $\beta_j$'s as the coefficients of a kriging interpolant, i.e., $(\beta_1,\ldots,\beta_n)=r^T_\Phi(x)K^{-1}_\Phi$, which is indeed a constant vector given $x$ and $X$. For example, Lemma \ref{lemGPRKHS} implies
\begin{align}
    \mathbb{E}[Z(x)-\mathcal{I}_{\Phi,X}Z(x)]^2&=\|\Psi(\cdot-x)-\mathcal{I}_{\Phi,X}\Psi(\cdot-x)\|_\Psi^2\nonumber\\
    &=\sup_{\|f\|_\Psi\leq 1}|f(x)-\mathcal{I}_{\Phi,X}f(x)|^2\label{quasipower}.
\end{align}
We shall call the quantity in (\ref{quasipower}) the \textit{quasi-power function}, denoted as $Q^2(x)$. Note that $Q^2(x)$ should also depend on $\Phi,\Psi$ and $X$, but we suppress this dependence for notational simplicity, and this will cause no ambiguity. A related quantity is the \textit{power function} \citep{wendland2004scattered}, defined as $$P^2_{\Psi,X}(x):=\mathbb{E}[Z(x)-\mathcal{I}_{\Psi,X}Z(x)]^2,$$ which is the conditional variance of $Z(x)$ given $Z(x_1),\ldots,Z(x_n)$. A simple relationship between $Q(x)$ and $P_{\Psi,X}(x)$ is
\begin{align}\label{BLP}
    Q(x)\geq P_{\Psi,X}(x).
\end{align}
This inequality can be proven via elementary calculations by showing that $\mathcal{I}_{\Psi,X}Z$ has the smallest mean squared prediction error among all predictors in terms of linear combinations of $Z(x_j)$, and $\mathcal{I}_{\Phi,X}Z$ is one of such. This result is also known as the \textit{best linear prediction} property of $\mathcal{I}_{\Psi,X}Z$ \citep{stein2012interpolation,santner2013design}.

The interest here lies in bounding $Q(x)$ in different manners. We state the results in Sections \ref{sec:Qupper} and \ref{sec:Qlower}.

\subsubsection{Upper bounds of the quasi-power function}\label{sec:Qupper}

Lemma \ref{th:Q} can be proven immediately by putting together Lemmas \ref{th:escape}, \ref{lemEquivalence} and \ref{lemGPRKHS}. Lemma \ref{th:Qunder} is a counterpart of Lemma \ref{th:Q} under the condition $\nu\leq \nu_0$, which follows directly from Lemmas \ref{th:escape}, \ref{lemEquivalence} and (\ref{samplingineq}).

\begin{lemma}\label{th:Q}
	Suppose Conditions \ref{C4}-\ref{C3} are met. If $\nu\geq\nu_0$, then there exist constants $C>0$ and $h_0\in(0,1]$ independent of $X$ and $x$ such that
	$$Q(x)\leq C h^{\nu_0}_{X,\Omega}\rho^{\nu-\nu_0}_{X,\Omega} $$
	holds for all $x\in\Omega$ and all $X$ satisfying $h_{X,\Omega}\leq h_0$.
\end{lemma}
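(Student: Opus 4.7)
The plan is to apply the variational characterization of the quasi-power function derived just above in (\ref{quasipower}), namely
\[
Q(x)=\sup_{\|f\|_\Psi\le 1}|f(x)-\mathcal{I}_{\Phi,X}f(x)|,
\]
and then control the pointwise interpolation error uniformly over the unit ball of $\mathcal{N}_\Psi(\mathbb{R}^d)$ using the escape theorem, Lemma \ref{th:escape}.

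First, I would use Lemma \ref{lemEquivalence} to translate the RKHS norm bound into a Sobolev norm bound: since Condition \ref{C1} is in force, $\mathcal{N}_\Psi(\mathbb{R}^d)=W_2^{\nu_0+d/2}(\mathbb{R}^d)$ with equivalent norms, so there exists a constant $C_1>0$, depending only on $\Psi$, such that every $f$ with $\|f\|_\Psi\le 1$ satisfies $\|f\|_{W_2^{\nu_0+d/2}(\mathbb{R}^d)}\le C_1$. Next, since $\nu\ge \nu_0$, Lemma \ref{th:escape} applies to every such $f$: there exist $C_2>0$ and $h_0\in(0,1]$, independent of $f$, $x$, and $X$, so that for all $X$ with $h_{X,\Omega}\le h_0$,
\[
\sup_{x\in\Omega}|f(x)-\mathcal{I}_{\Phi,X}f(x)|\le C_2\,h_{X,\Omega}^{\nu_0}\rho_{X,\Omega}^{\nu-\nu_0}\|f\|_{W_2^{\nu_0+d/2}(\mathbb{R}^d)}\le C_1C_2\,h_{X,\Omega}^{\nu_0}\rho_{X,\Omega}^{\nu-\nu_0}.
\]
Taking the supremum over the unit ball $\{\|f\|_\Psi\le 1\}$ on the left and invoking the identity from (\ref{quasipower}) yields the desired bound $Q(x)\le C\,h_{X,\Omega}^{\nu_0}\rho_{X,\Omega}^{\nu-\nu_0}$ with $C=C_1C_2$, uniformly in $x\in\Omega$.

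There is essentially no obstacle here: all the heavy lifting has already been done. The escape bound of Lemma \ref{th:escape} (which itself relies on the band-limited approximation of Lemma \ref{lem:bandlimited} combined with the standard sampling inequality (\ref{samplingineq})) is precisely calibrated to the oversmoothed regime $\nu\ge\nu_0$, and the Gaussian-process-to-RKHS dictionary from Lemma \ref{lemGPRKHS}, already applied in (\ref{quasipower}), converts the stochastic quantity $Q(x)$ into a deterministic worst-case interpolation error over the unit ball of $\mathcal{N}_\Psi(\mathbb{R}^d)$. The only mild care is to ensure that the constant $C$ and the threshold $h_0$ can be chosen independently of $x$ and of the particular function $f$ attaining (or approaching) the supremum, which is automatic because the bound in Lemma \ref{th:escape} is already uniform in $x\in\Omega$ and linear in $\|f\|_{W_2^{\nu_0+d/2}(\mathbb{R}^d)}$.
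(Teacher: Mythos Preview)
Your proposal is correct and matches the paper's own argument exactly: the paper states that Lemma~\ref{th:Q} ``can be proven immediately by putting together Lemmas \ref{th:escape}, \ref{lemEquivalence} and \ref{lemGPRKHS},'' which is precisely the chain you outline via (\ref{quasipower}).
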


\begin{lemma}\label{th:Qunder}
Suppose Conditions \ref{C4}-\ref{C3} are met. If $\nu\leq\nu_0$, then there exist constants $C>0$ and $h_0\in(0,1]$ independent of $X$ and $x$ such that
	$$Q(x)\leq C h^{\nu}_{X,\Omega}$$
	holds for all $x\in\Omega$ and all $X$ satisfying $h_{X,\Omega}\leq h_0$.
\end{lemma}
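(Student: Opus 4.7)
The plan is to leverage the representation \eqref{quasipower}, namely
$$Q(x)=\sup_{\|f\|_\Psi\leq 1}|f(x)-\mathcal{I}_{\Phi,X}f(x)|,$$
so that bounding $Q(x)$ reduces to a deterministic approximation problem: estimate $|f(x)-\mathcal{I}_{\Phi,X}f(x)|$ uniformly in $x$ and in $f$ over the unit ball of $\mathcal{N}_\Psi(\mathbb{R}^d)$. The case at hand ($\nu\le\nu_0$) is, in contrast to the oversmoothed case treated in Lemma \ref{th:Q}, the ``easy'' direction of the theory, because the true kernel is smoother than the imposed one, so any $f$ in the unit ball of $\mathcal{N}_\Psi(\mathbb{R}^d)$ automatically lies in the RKHS $\mathcal{N}_\Phi(\mathbb{R}^d)$ of the imposed kernel, and no escape theorem is required.

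First I would apply the classical sampling inequality \eqref{samplingineq} to the imposed kernel $\Phi$: for every $f\in W_2^{\nu+d/2}(\mathbb{R}^d)$ and every $X$ with $h_{X,\Omega}\le h_0$,
$$\|f-\mathcal{I}_{\Phi,X}f\|_{L_\infty(\Omega)}\le C_1 h_{X,\Omega}^{\nu}\|f\|_{W_2^{\nu+d/2}(\mathbb{R}^d)}.$$
Next I would pass from the $W_2^{\nu+d/2}$ norm to the $W_2^{\nu_0+d/2}$ norm: since $\nu\le\nu_0$, the Fourier-based definition of the Sobolev norms immediately gives the continuous embedding $W_2^{\nu_0+d/2}(\mathbb{R}^d)\hookrightarrow W_2^{\nu+d/2}(\mathbb{R}^d)$ with norm constant $1$. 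Finally, Lemma \ref{lemEquivalence} together with Condition \ref{C1} asserts $\mathcal{N}_\Psi(\mathbb{R}^d)=W_2^{\nu_0+d/2}(\mathbb{R}^d)$ with equivalent norms, so there is a constant $C_2$ with $\|f\|_{W_2^{\nu_0+d/2}(\mathbb{R}^d)}\le C_2\|f\|_\Psi$.

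Chaining these three ingredients, for every $f$ with $\|f\|_\Psi\le 1$,
$$|f(x)-\mathcal{I}_{\Phi,X}f(x)|\le C_1 h_{X,\Omega}^{\nu}\|f\|_{W_2^{\nu+d/2}(\mathbb{R}^d)}\le C_1 h_{X,\Omega}^{\nu}\|f\|_{W_2^{\nu_0+d/2}(\mathbb{R}^d)}\le C_1 C_2 h_{X,\Omega}^{\nu},$$
uniformly in $x\in\Omega$. Taking the supremum over the unit ball of $\mathcal{N}_\Psi(\mathbb{R}^d)$ and invoking \eqref{quasipower} yields $Q(x)\le C h_{X,\Omega}^{\nu}$ with $C=C_1C_2$.

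There is essentially no technical obstacle: all three ingredients are either cited results from the paper (Lemma \ref{lemEquivalence} and the inequality \eqref{samplingineq}) or an elementary spectral-density comparison. The only minor care-point is that \eqref{samplingineq} as quoted is stated on $\mathbb{R}^d$, but restricting to $\Omega$ only makes the left-hand side smaller, so it is harmless; the constants $C$ and $h_0$ in the conclusion inherit their dependencies from Corollary 11.33 of \cite{wendland2004scattered} and Lemma \ref{lemEquivalence}, and depend only on $\Omega,\Phi,\Psi$ as required.
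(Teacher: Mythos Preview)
Your argument is correct and is essentially the approach the paper takes: the paper states that Lemma \ref{th:Qunder} ``follows directly from Lemmas \ref{th:escape}, \ref{lemEquivalence} and (\ref{samplingineq})'', and your chain (quasi-power representation \eqref{quasipower} $\to$ sampling inequality \eqref{samplingineq} $\to$ Sobolev embedding $W_2^{\nu_0+d/2}\hookrightarrow W_2^{\nu+d/2}$ $\to$ Lemma \ref{lemEquivalence}) is exactly this. The paper's reference to Lemma \ref{th:escape} here appears to be superfluous (or a slip for Lemma \ref{lemGPRKHS}/equation \eqref{quasipower}), since in the undersmoothed regime $\nu\le\nu_0$ no escape theorem is needed---precisely as you observe.
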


\subsubsection{A lower bound of the quasi-power function}\label{sec:Qlower}

The goal of this section is to prove a lower bound of the quasi-power function under the $L_2(\Omega)$ norm, given by Lemma \ref{lem:Qlower}.

\begin{lemma}\label{lem:Qlower}
Suppose Conditions \ref{C4}-\ref{C1} are met. Then we have
$$\|P_{\Psi,X}\|_{L_2(\Omega)}\geq C n^{-\nu_0/d}, $$
where $n={\rm card}(X)$.
Here the constant $C$ depends only on $\Omega$ and $\Psi$, including $\nu_0$.
\end{lemma}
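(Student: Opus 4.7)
The plan is to identify $\|P_{\Psi,X}\|^2_{L_2(\Omega)}$ with the minimum mean squared $L_2(\Omega)$-error for reconstructing $Z$ from its values at $X$, and then reduce to a classical average-case lower bound. By Fubini's theorem and the definition $P_{\Psi,X}^2(x)=\mathbb{E}[Z(x)-\mathcal{I}_{\Psi,X}Z(x)]^2$,
\begin{align*}
\|P_{\Psi,X}\|^2_{L_2(\Omega)} \;=\; \mathbb{E}\|Z-\mathcal{I}_{\Psi,X}Z\|^2_{L_2(\Omega)}.
\end{align*}
Since the kernel used in the interpolant is the true correlation $\Psi$, the right-hand side is the average-case $L_2$ error of the best predictor among all (linear or nonlinear) Borel functions of $Z|_X$, i.e.\ $\inf_\phi e_2^{\mathrm{avg}}(\phi,N)^2$ in the notation of Section \ref{Sec:related}, where $N$ is the sampling operator at the $n$ design points. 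The claim then follows directly from Lemma \ref{th:lowLp}, the Papageorgiou--Wasilkowski $L_2$ lower bound, which under Condition \ref{C1} supplies a $Cn^{-\nu_0/d}$ lower bound after taking square roots.

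If one wants to avoid a black-box appeal to Lemma \ref{th:lowLp}, I would prove the bound directly via a Karhunen--Lo\`eve expansion. Let $(\lambda_k,\phi_k)_{k\geq 1}$ be the eigensystem of the covariance integral operator $C_\Psi$ on $L_2(\Omega)$, ordered so that $\lambda_1\geq\lambda_2\geq\cdots>0$, and write $Z=\sum_k\sqrt{\lambda_k}\,\xi_k\phi_k$ with $\xi_k$ iid $N(0,1)$. Under this isometric embedding, $\mathcal{I}_{\Psi,X}Z$ corresponds to projecting the coefficient sequence $\xi\in\ell_2$ onto an at-most-$n$-dimensional subspace $W$, so
\begin{align*}
\mathbb{E}\|Z-\mathcal{I}_{\Psi,X}Z\|^2_{L_2(\Omega)} \;=\; \operatorname{tr}\bigl((I-P_W)\Lambda\bigr) \;\geq\; \sum_{k>n}\lambda_k,
\end{align*}
with $\Lambda=\operatorname{diag}(\lambda_1,\lambda_2,\ldots)$; the inequality is the elementary linear-programming bound using $(P_W)_{kk}\in[0,1]$ together with $\operatorname{tr}(P_W)=n$, which forces $\operatorname{tr}(P_W\Lambda)\leq\sum_{k=1}^n\lambda_k$.

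The main obstacle is the eigenvalue asymptotics of $C_\Psi$. Under Conditions \ref{C4} and \ref{C1}, the RKHS of $\Psi$ is norm-equivalent to $W_2^{\nu_0+d/2}(\mathbb{R}^d)$ by Lemma \ref{lemEquivalence}, and the Lipschitz regularity of $\partial\Omega$ allows one to transfer this equivalence to $W_2^{\nu_0+d/2}(\Omega)$ via an extension operator. Classical Birman--Solomyak / Weyl-type asymptotics for integral operators with Sobolev-smooth kernels on Lipschitz domains then yield $\lambda_k\asymp k^{-1-2\nu_0/d}$, whence $\sum_{k>n}\lambda_k\asymp n^{-2\nu_0/d}$, giving $\|P_{\Psi,X}\|_{L_2(\Omega)}\geq Cn^{-\nu_0/d}$. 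This is precisely where Conditions \ref{C4} and \ref{C1} both enter crucially: without the algebraic spectral-density decay the eigenvalue rate changes, and without sufficient boundary regularity the standard eigenvalue decay cannot be concluded for $\Omega$.
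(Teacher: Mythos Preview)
Your proposal is correct and follows essentially the same route as the paper: reduce to $\|P_{\Psi,X}\|^2_{L_2(\Omega)}\geq\sum_{k>n}\lambda_k$ via Lemma~\ref{th:lowLp}, then establish $\lambda_k\asymp k^{-1-2\nu_0/d}$ (the paper obtains this decay via approximation numbers of the Sobolev embedding $W_2^{\nu_0+d/2}(\Omega)\hookrightarrow L_2(\Omega)$ from Edmunds--Triebel rather than Birman--Solomyak, but the content is the same). Note only that Lemma~\ref{th:lowLp} as stated in the paper gives just the eigenvalue-sum bound, not the $n^{-\nu_0/d}$ rate directly, so the eigenvalue-asymptotic step in your third paragraph is required rather than optional---which you do supply.
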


Because $\|P_{\Psi,X}\|_{L_2(\Omega)}\leq \sqrt{{\rm Vol}(\Omega)}\sup_{x\in\Omega}P_{\Psi,X}(x)$, where ${\rm Vol}(\Omega)$ denotes the volume of $\Omega$, we obtain Corollary \ref{coro:Qlower}. Corollary \ref{coro:Qlower} is a standard result in scattered data approximation; see, for example, Theorem 11 of \cite{wenzel2019novel}.

\begin{corollary}\label{coro:Qlower}
Suppose Conditions \ref{C4}-\ref{C1} are met. Then we have
$$\sup_{x\in\Omega}P_{\Psi,X}(x)\geq C n^{-\nu_0/d}. $$
Here the constant $C$ depends only on $\Omega$ and $\Psi$, including $\nu_0$.
\end{corollary}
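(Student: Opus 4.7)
The plan is to identify $\|P_{\Psi,X}\|_{L_2(\Omega)}^2$ with an average-case $L_2$ reconstruction error for the Gaussian process $Z$, and then invoke the Papageorgiou-type average-case lower bound referenced in the excerpt as Lemma \ref{th:lowLp}. First, swap expectation and integral by Fubini, using $P_{\Psi,X}^2(x)=\mathbb{E}[(Z(x)-\mathcal{I}_{\Psi,X}Z(x))^2]$, to obtain
\[
\|P_{\Psi,X}\|_{L_2(\Omega)}^2
=\mathbb{E}\int_\Omega\bigl(Z(x)-\mathcal{I}_{\Psi,X}Z(x)\bigr)^2\,dx
=\mathbb{E}\|Z-\mathcal{I}_{\Psi,X}Z\|_{L_2(\Omega)}^2.
\]
Since the true covariance kernel is $\Psi$, the kriging predictor coincides with the conditional mean $\mathbb{E}[Z(\cdot)\mid Z(x_1),\ldots,Z(x_n)]$, hence minimizes the mean squared error pointwise and therefore also in $L_2(\Omega)$ among all measurable predictors built from $Z(x_1),\ldots,Z(x_n)$. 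So the right-hand side equals the average-case optimal reconstruction error from $n$ function values, which is bounded below by a constant multiple of $n^{-2\nu_0/d}$ per Lemma \ref{th:lowLp}. Taking square roots gives the claim.

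The subtlety is that Lemma \ref{th:lowLp}, as stated in the average-case literature, is typically formulated for a simple geometry such as $\Omega=[0,1]^d$ and for a canonical Matérn kernel, whereas here $\Omega$ only satisfies Condition \ref{C4} and $f_\Psi$ only satisfies the two-sided bound in Condition \ref{C1}. To handle the domain, I would use the interior cone condition to fit a closed ball $B\subset\Omega$ of fixed radius; since $\|\cdot\|_{L_2(\Omega)}\geq \|\cdot\|_{L_2(B)}$ and at most $n$ design points lie in $B$, any lower bound valid on $B$ transfers to $\Omega$. To handle the kernel, observe that by Condition \ref{C1} and Lemma \ref{lemEquivalence}, $\mathcal{N}_\Psi(\mathbb{R}^d)$ is norm-equivalent to $W_2^{\nu_0+d/2}(\mathbb{R}^d)$, so restriction yields the same Sobolev-type scale on $B$ and the integral operator $T_\Psi:L_2(B)\to L_2(B)$ with kernel $\Psi$ inherits the Weyl-type eigenvalue decay $\lambda_k(T_\Psi)\asymp k^{-(2\nu_0+d)/d}$ of the pure Matérn case.

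The cleanest finish, if a direct appeal to Lemma \ref{th:lowLp} in this generality is unavailable, is to conclude through Karhunen--Lo\`eve: expanding $Z$ in the eigenbasis of $T_\Psi$ on $B$, the optimal average $L_2(B)$ error from $n$ linear functionals of $Z$ is exactly $\sum_{k>n}\lambda_k$, and this tail sum satisfies $\sum_{k>n}\lambda_k\gtrsim n\cdot n^{-(2\nu_0+d)/d}=n^{-2\nu_0/d}$. Composing all steps then yields $\|P_{\Psi,X}\|_{L_2(\Omega)}\gtrsim n^{-\nu_0/d}$ with a constant depending only on $\Omega$ and $\Psi$.

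The main obstacle I anticipate is precisely bridging the gap between the generic spectral condition on $f_\Psi$ and the specific Matérn-model lower bounds in the average-case literature — in particular verifying the two-sided eigenvalue asymptotics for $T_\Psi$ on an arbitrary ball $B$. The lower half of that asymptotic, $\lambda_k(T_\Psi)\gtrsim k^{-(2\nu_0+d)/d}$, is what actually matters here, and it can be obtained by a min-max argument applied to a suitably chosen $k$-dimensional space of smooth band-limited test functions supported in $B$, using the lower bound $f_\Psi(\omega)\gtrsim (1+\|\omega\|^2)^{-(\nu_0+d/2)}$ from Condition \ref{C1}. All remaining steps are bookkeeping.
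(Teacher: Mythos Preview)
Your overall strategy matches the paper's: identify $\|P_{\Psi,X}\|_{L_2(\Omega)}^2$ with the average-case $L_2$ error, invoke the Papageorgiou-type eigenvalue-tail bound (the paper's Lemma \ref{th:lowLp}), and then control $\sum_{k>n}\lambda_k$ via the decay $\lambda_k\asymp k^{-2\nu_0/d-1}$. Two small points, however. First, you stop at $\|P_{\Psi,X}\|_{L_2(\Omega)}\gtrsim n^{-\nu_0/d}$, which is Lemma \ref{lem:Qlower}; the stated corollary is the sup bound, obtained from the trivial inequality $\|P_{\Psi,X}\|_{L_2(\Omega)}\leq \sqrt{\mathrm{Vol}(\Omega)}\sup_{x\in\Omega}P_{\Psi,X}(x)$. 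Second, your detour through a sub-ball $B\subset\Omega$ and a hands-on min-max construction with band-limited test functions is unnecessary. The paper's Lemma \ref{th:lowLp} is already stated on $\Omega$ in terms of the Mercer eigenvalues of $\Psi$ there, with no geometric restriction; what remains is the eigenvalue decay, and for this the paper takes a cleaner route: by Lemma \ref{lemEquivalence} the RKHS coincides with $W_2^{\nu_0+d/2}(\Omega)$, so the singular values $\mu_k$ of the embedding $\mathcal{N}_\Psi(\Omega)\hookrightarrow L_2(\Omega)$ equal the approximation numbers $a_k$ of $W_2^{\nu_0+d/2}(\Omega)\hookrightarrow L_2(\Omega)$, and the Edmunds--Triebel asymptotics $a_k\asymp k^{-\nu_0/d-1/2}$ hold for any bounded Lipschitz domain (hence under Condition \ref{C4}), giving $\lambda_k=\mu_k^2\asymp k^{-2\nu_0/d-1}$ directly. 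Your min-max approach would recover only the needed lower half of this and requires building compactly supported test functions by hand; the approximation-number route gives both sides at once and avoids the ball restriction entirely.
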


To prove Lemma \ref{lem:Qlower}, we need a result from the average-case analysis of numerical problems, given by
Lemma \ref{th:lowLp}, which is a direct consequence of Theorem 1.2 of \cite{papageorgiou1990average}. It states lower bounds of $\|Q\|_{L_2(\Omega)}$ in terms of the eigenvalues.

Because $\Psi$ is a positive definite function, by Mercer's theorem (see \cite{Pog1966integral} for example), there exists a countable set of positive eigenvalues $\lambda_1\geq \lambda_2\geq...>0$ and an orthonormal basis for $L_2(\Omega)$, denoted as $\{\varphi_k\}_{k\in\mathbb{N}}$, such that
\begin{align}\label{eigenmercer}
\Psi(x-y) = \sum_{k=1}^\infty \lambda_k \varphi_k(x)\varphi_k(y),
\end{align}
where the summation is uniformly and absolutely convergent.

\begin{lemma}\label{th:lowLp}
Let $\lambda_k$'s be eigenvalues of $\Psi$. Then we have
\begin{eqnarray*}
\|P_{\Psi,X}\|^2_{L_2(\Omega)} \geq
     \sum_{k=n+1}^\infty \lambda_k.
\end{eqnarray*}
\end{lemma}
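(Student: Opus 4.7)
The plan is to unpack the left-hand side using the Karhunen--Lo\`eve representation of $Z$ and reduce the bound to a simple trace inequality for orthogonal projections. Recall that $P_{\Psi,X}(x)^2 = \mathbb{E}[Z(x)-\mathcal{I}_{\Psi,X}Z(x)]^2$ is the conditional variance of $Z(x)$ given $Z(x_1),\dots,Z(x_n)$, so by Fubini,
\begin{equation*}
\|P_{\Psi,X}\|^2_{L_2(\Omega)}=\int_\Omega \mathbb{E}\bigl[Z(x)-\mathcal{I}_{\Psi,X}Z(x)\bigr]^2\,dx=\mathbb{E}\|Z-\mathcal{I}_{\Psi,X}Z\|^2_{L_2(\Omega)}.
\end{equation*}
Using the Mercer expansion \eqref{eigenmercer}, I would write $Z(x)=\sum_{k=1}^\infty \sqrt{\lambda_k}\,\xi_k\varphi_k(x)$ with $\xi_k$ i.i.d.\ $N(0,1)$, which is the standard Karhunen--Lo\`eve series.

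Next I would reinterpret the best linear predictor in the Gaussian Hilbert space. Let $V$ be the closed linear span of $\{Z(x_1),\dots,Z(x_n)\}$ inside $L^2(\mathbb{P})$; then $\dim V\le n$ and $\mathcal{I}_{\Psi,X}Z(x)$ is the orthogonal projection of $Z(x)$ onto $V$ (this is the best linear prediction property \eqref{BLP}, and for a Gaussian process it coincides with the conditional expectation). Pulling the projection through the Karhunen--Lo\`eve series and using that $\{\varphi_k\}$ is orthonormal in $L_2(\Omega)$, I get
\begin{equation*}
\mathbb{E}\|Z-\mathcal{I}_{\Psi,X}Z\|^2_{L_2(\Omega)}=\sum_{k=1}^\infty \lambda_k\,\mathrm{Var}(\xi_k-\mathrm{Proj}_V\xi_k)=\sum_{k=1}^\infty\lambda_k\bigl(1-\|\mathrm{Proj}_V\xi_k\|^2\bigr).
\end{equation*}
Because the map $\sum a_k\mathbf{e}_k\mapsto\sum a_k\xi_k$ is an isometry $\ell^2\to L^2(\mathbb{P})$, the projection onto $V$ corresponds to an orthogonal projection $P_U$ of rank at most $n$ on $\ell^2$, and $\|\mathrm{Proj}_V\xi_k\|^2=\|P_U\mathbf{e}_k\|^2$.

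The final step is the trace inequality: with $\Lambda=\mathrm{diag}(\lambda_k)$,
\begin{equation*}
\sum_{k=1}^\infty \lambda_k\|P_U\mathbf{e}_k\|^2=\mathrm{tr}(\Lambda P_U)\le \sum_{k=1}^n\lambda_k,
\end{equation*}
since $\mathrm{tr}(\Lambda P_U)$ is maximized over all rank-$n$ orthogonal projections by projecting onto the span of the top $n$ eigenvectors (a standard Ky Fan type maximum principle, easily obtained from $\mathrm{tr}(P_U)\le n$ and $\lambda_k$ being decreasing). Subtracting from $\sum_k\lambda_k$ yields
\begin{equation*}
\|P_{\Psi,X}\|^2_{L_2(\Omega)}\ge \sum_{k=1}^\infty\lambda_k-\sum_{k=1}^n\lambda_k=\sum_{k=n+1}^\infty\lambda_k,
\end{equation*}
which is the claim.

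The main technical nuisance will be the bookkeeping of the infinite series: justifying term-by-term integration in the Karhunen--Lo\`eve argument (which is fine by dominated/monotone convergence since $\sum\lambda_k<\infty$ under Condition \ref{C1}), and identifying the best linear predictor $\mathcal{I}_{\Psi,X}Z$ with the $L^2(\mathbb{P})$-orthogonal projection onto $V$. Once those measure-theoretic details are discharged, the bound is genuinely just the Ky Fan maximum principle applied to a positive diagonal operator against a finite-rank projection, which is why Papageorgiou and Wasilkowski's Theorem 1.2 applies directly.
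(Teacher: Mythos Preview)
Your argument is correct. The paper does not actually prove this lemma: it simply states that the inequality is ``a direct consequence of Theorem 1.2 of \cite{papageorgiou1990average}'' and moves on. What you have written is essentially a self-contained reconstruction of that cited result --- the Karhunen--Lo\`eve expansion reduces the expected $L_2$ error to $\sum_k \lambda_k(1-\|P_U e_k\|^2)$, and then the Ky Fan maximum principle (or the elementary observation that $\sum_k \lambda_k p_k$ with $p_k\in[0,1]$, $\sum_k p_k\le n$, $\lambda_k$ decreasing, is maximized at $p_1=\cdots=p_n=1$) finishes the job. The measure-theoretic points you flag (termwise integration, identifying $\mathcal{I}_{\Psi,X}Z(x)$ with the $L^2(\mathbb{P})$-projection onto $V$) are routine and exactly the right places to insert a line of justification.

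So there is no discrepancy in approach to discuss: the paper outsources the proof, and you have supplied it.
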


\begin{proof}[Proof of Lemma \ref{lem:Qlower}]
Define the $k$th approximation number of the embedding $id: W_2^{\nu_0+\frac{d}{2}}(\Omega) \rightarrow L_{p}(\Omega)$, denoted by $a_k$, by
\begin{align*}
    a_k = \inf \{\|id - L\|, H \in \mathcal{H}(W_2^{\nu_0+\frac{d}{2}}(\Omega), L_{2}(\Omega)), \text{rank}(H)<k\},
\end{align*}
where $\mathcal{H}(W_2^{\nu_0+\frac{d}{2}}(\Omega), L_{2}(\Omega))$ is the family of all bounded linear mappings $W_2^{\nu_0+\frac{d}{2}}(\Omega) \rightarrow L_{2}(\Omega)$, $\|\cdot\|$ is the operator norm, and $\text{rank}(H)$ is the dimension of the range of $H$ \citep{edmunds2008function}. The approximation number measures the approximation properties by affine (linear) $k$-dimensional mappings. Let $T$ be the embedding operator of $\mathcal{N}_{\Psi}(\Omega)$ into $L_2(\Omega)$, and $T^*$ be the adjoint of $T$. By Proposition 10.28 in \cite{wendland2004scattered},
\begin{align*}
T^*v(x) = \int_{ \Omega} \Psi(x-y)v(y)dy,\quad v\in L_2( \Omega), x\in \Omega.
\end{align*}
By Lemma \ref{lemEquivalence}, $W_2^{\nu_0+\frac{d}{2}}( \Omega)$ coincides with $\mathcal{N}_{\Psi}( \Omega)$. By Theorem 5.7 in \cite{edmunds1987spectral}, $T$ and $T^*$ have the same singular values. By Theorem 5.10 in \cite{edmunds1987spectral}, for all $k\in \mathbb{N}$, $a_k(T)=\mu_k(T)$, where $a_k(T)$ denotes the approximation number for the embedding operator (as well as the integral operator), and $\mu_k$ denotes the singular value of $T$. By the theorem in Section 3.3.4 in \cite{edmunds2008function}, the embedding operator $T$ has approximation numbers
satisfying
\begin{align}\label{ineqnAppNum}
C_3k^{-\nu_0/d-1/2}\leqslant a_k\leqslant C_4k^{-\nu_0/d-1/2}, \forall k\in \mathbb{N},
\end{align}
where $C_3$ and $C_4$ are two positive numbers. By Theorem 5.7 in \cite{edmunds1987spectral}, $T^*T\varphi_k=\mu_k^2\varphi_k$, and $T^*T\varphi_k=T^*\varphi_k=\lambda_k\varphi_k$, we have $\lambda_k=\mu_k^2$. By (\ref{ineqnAppNum}), $\lambda_k\asymp k^{-2\nu_0/d-1}$ holds. Then the desired result follows from Lemma \ref{th:lowLp}.
\end{proof}

\subsection{$L_\infty$ results}\label{sec:proofLinfty}
In this section, we prove Theorems \ref{Th:main}  and \ref{th:low}.
The natural distances of Gaussian processes play a crucial role in establishing these $L_\infty$ results.

\begin{defn}
    The natural distance $d(x,x')$ of a zero-mean Gaussian process $G(x)$ with $x\in\Omega$ is defined as
    $$d^2_G(x,x')=\mathbb{E}[G(x)-G(x')]^2, $$
    for $x,x'\in \Omega$. Once equipped with $d_G$, $\Omega$ becomes a metric space.
\end{defn}

The \textit{$\epsilon$-covering number} of the metric space $(\Omega,d_G)$, denoted as $N(\epsilon,\Omega,d_G)$, is the minimum integer $N$ so that there exist $N$ distinct balls in $(\Omega,d_G)$ with radius $\epsilon$, and the union of these balls covers $\Omega$. The natural distance and the associated covering number, are closely tied to the $L_\infty$ norm of the Gaussian process, say $\sup_{x\in\Omega}|G(x)|$. The needed results are collected in Lemmas \ref{concentrationGP}-\ref{lbexgau}. Lemma \ref{concentrationGP} is a version of the Borell-TIS inequality for the $L_\infty$ norm of a Gaussian process. Its proof can be found in \cite{pisier1999volume}.

\begin{lemma}[Borell-TIS inequality]\label{concentrationGP}
Let $G(x)$ be a separable zero-mean Gaussian process with continuous sample paths almost surely and $x$ lying in a $d_G$-compact set $\Omega$. Let $\sigma_G^2 = \sup_{x\in \Omega}\mathbb{E}G(x)^2$. Then, we have $\mathbb{E}\sup_{x\in \Omega} |G(x)| < \infty$ and for all $t>0$,
\begin{align}
&\mathbb{P}\bigg( \mathbb{E} \sup_{x\in \Omega} |G(x)| -  \sup_{x\in \Omega} |G(x)| \geq t\bigg)\leq e^{-t^2/2\sigma_G^2},\label{concentrationGP1}\\
&\mathbb{P}\bigg( \mathbb{E} \sup_{x\in \Omega} |G(x)| -  \sup_{x\in \Omega} |G(x)| \leq -t\bigg)\leq e^{-t^2/2\sigma_G^2}.\label{concentrationGP2}
\end{align}
\end{lemma}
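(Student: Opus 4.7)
The plan is to deduce this concentration result from the Gaussian isoperimetric inequality, equivalently from the Gaussian concentration inequality for Lipschitz functions of a standard Gaussian vector. The first step is a reduction to the finite-dimensional case: by separability and almost-sure sample-path continuity, choose a countable dense subset $\{x_k\}_{k\geq 1}\subset\Omega$ and observe that $M_N:=\max_{1\le k\le N}|G(x_k)|$ increases pointwise almost surely to $M:=\sup_{x\in\Omega}|G(x)|$. It therefore suffices to establish each one-sided tail bound for $M_N$ with constants independent of $N$ and then pass to the limit.

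For fixed $N$, write the Gaussian vector $(G(x_1),\ldots,G(x_N))^T = A\xi$ with $\xi\sim N(0,I_N)$ and $AA^T$ equal to the covariance matrix, so that $M_N = F(\xi)$ where $F(z):=\max_{1\le k\le N}|(Az)_k|$. A direct use of the triangle inequality and Cauchy--Schwarz gives $|F(z)-F(z')|\le (\max_{k}\|A_k\|_2)\,\|z-z'\|_2$, where $A_k$ denotes the $k$-th row of $A$. Since $\|A_k\|_2^2 = \mathbb{E}[G(x_k)^2]\le \sigma_G^2$, the map $F$ is $\sigma_G$-Lipschitz. Invoking the Gaussian concentration inequality for Lipschitz functions of a standard Gaussian vector then produces
\begin{equation*}
\mathbb{P}(F(\xi)-\mathbb{E}F(\xi)\ge t)\le e^{-t^2/(2\sigma_G^2)},\qquad \mathbb{P}(\mathbb{E}F(\xi)-F(\xi)\ge t)\le e^{-t^2/(2\sigma_G^2)},
\end{equation*}
uniformly in $N$.

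To close the argument, finiteness of $\mathbb{E}M$ follows from Fernique's theorem, or alternatively from Dudley's entropy bound applied to the compact metric space $(\Omega,d_G)$. Given this, $M_N\uparrow M$ almost surely yields $\mathbb{E}M_N\to \mathbb{E}M$ by monotone convergence, while the tail bounds for $M_N$ pass to bounds for $M$ by taking pointwise limits inside the probability (the events $\{M_N-\mathbb{E}M_N\ge t\}$ and $\{M-\mathbb{E}M\ge t-\varepsilon\}$ can be compared for $N$ large, then $\varepsilon\downarrow 0$); this produces \eqref{concentrationGP1}--\eqref{concentrationGP2}. The substantive obstacle is the Gaussian concentration step itself: establishing sub-Gaussian concentration for a Lipschitz function of a standard Gaussian vector requires a genuine isoperimetric input, typically either Ehrhard's Gaussian symmetrization (yielding the Gaussian isoperimetric inequality) or Herbst's argument applied to the Gaussian log-Sobolev inequality. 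Once that ingredient is available, the remainder is the Lipschitz-constant computation together with routine bookkeeping in the infinite-dimensional limit.
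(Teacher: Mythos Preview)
The paper does not actually prove this lemma; it simply records it as a known result and directs the reader to \cite{pisier1999volume}. Your proposal therefore supplies more than the paper does, and the argument you outline is correct: the reduction to finite maxima via a countable dense set, the representation $(G(x_1),\ldots,G(x_N))^T=A\xi$, the Lipschitz bound $\max_k\|A_k\|_2\le\sigma_G$, and the limiting step are all standard and sound.

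It is worth noting that your strategy is exactly the one the paper itself employs to prove the $L_p$ analogue, Lemma~\ref{thm211}: there too the authors discretize, write the finite-dimensional functional as $h(\xi)=\|A\xi\|_w$, compute the Lipschitz constant via $\|e_j^TA\|^2=\mathbb{E}G(x_{nj})^2\le\sigma_G^2$, invoke the Gaussian concentration inequality for Lipschitz functions (stated in the paper as Lemma~\ref{conlips}, cited to \cite{adler2009random}), and pass to the limit. So your approach is not merely correct but is precisely aligned with the paper's own toolkit; the authors just chose to outsource the $L_\infty$ case to a reference while writing out the $L_p$ case in full. Your remark that the genuine depth lies in the Gaussian isoperimetric or log-Sobolev input behind Lemma~\ref{conlips} is accurate, and the paper likewise treats that input as a black box.
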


\begin{lemma}[Corollary 2.2.8 of \cite{van1996weak}]\label{lem:GPsupnorm}
Let $G(x)$ be as in Lemma \ref{concentrationGP}. For some universal constant $C$, we have
\begin{align*}
\mathbb{E} \sup_{ x,x'\in \Omega} |G( x)-G(x')| \leq C \int_0^D \sqrt{\log N(\epsilon,\Omega,d_G)}d\epsilon,
\end{align*}
where $D=\sup_{x,x'\in\Omega}d_G(x,x')$ is the diameter of $\Omega$ under $d_G$.
\end{lemma}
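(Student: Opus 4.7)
The plan is to prove this by the generic chaining argument, often attributed to Dudley. The process $G$ is sub-Gaussian with respect to $d_G$ in the sense that for any $x,x'\in\Omega$, $G(x)-G(x')$ is a mean-zero Gaussian variable with standard deviation $d_G(x,x')$, which implies the tail bound $\mathbb{P}(|G(x)-G(x')|>s)\leq 2\exp(-s^2/(2d_G(x,x')^2))$. We will control $\mathbb{E}\sup_{x,x'}|G(x)-G(x')|$ by decomposing the process across geometrically shrinking resolution levels of the metric space $(\Omega,d_G)$.

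First, I would fix a reference point $x_0\in\Omega$ and, for each integer $k\geq 0$, choose a $2^{-k}D$-net $\mathcal{N}_k\subset\Omega$ of cardinality $N_k:=N(2^{-k}D,\Omega,d_G)$, with $\mathcal{N}_0=\{x_0\}$. For any $x\in\Omega$, let $\pi_k(x)\in\mathcal{N}_k$ be a nearest neighbour under $d_G$, so that $d_G(x,\pi_k(x))\leq 2^{-k}D$. Since $G$ has continuous sample paths almost surely, $G(\pi_k(x))\to G(x)$ almost surely as $k\to\infty$, so we may telescope
\begin{equation*}
G(x)-G(x_0)=\sum_{k=1}^{\infty}\bigl(G(\pi_k(x))-G(\pi_{k-1}(x))\bigr).
\end{equation*}
The key point is that $d_G(\pi_k(x),\pi_{k-1}(x))\leq 3\cdot 2^{-k}D$ by the triangle inequality, and that the number of distinct pairs $(\pi_k(x),\pi_{k-1}(x))$ as $x$ ranges over $\Omega$ is at most $N_kN_{k-1}\leq N_k^2$.

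Next I would bound each increment. By the maximal inequality for a finite collection of sub-Gaussian variables, $\mathbb{E}\max_{1\leq j\leq M}|\xi_j|\leq C'\sigma\sqrt{\log(1+M)}$ when each $\xi_j$ is mean-zero Gaussian with $\mathrm{Var}(\xi_j)\leq\sigma^2$. Applying this at level $k$ gives
\begin{equation*}
\mathbb{E}\sup_{x\in\Omega}\bigl|G(\pi_k(x))-G(\pi_{k-1}(x))\bigr|\leq C'\cdot 3\cdot 2^{-k}D\sqrt{\log(1+N_k^2)}\leq C''\cdot 2^{-k}D\sqrt{\log N_k}.
\end{equation*}
Summing in $k$, using $2^{-k}D\asymp\int_{2^{-k-1}D}^{2^{-k}D}d\epsilon$ and the monotonicity of $\epsilon\mapsto N(\epsilon,\Omega,d_G)$, one recognizes the Riemann sum of the Dudley entropy integral, yielding
\begin{equation*}
\mathbb{E}\sup_{x\in\Omega}|G(x)-G(x_0)|\leq C\int_0^D\sqrt{\log N(\epsilon,\Omega,d_G)}\,d\epsilon,
\end{equation*}
and then the two-sided supremum differs from this by at most a factor of two. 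The finiteness of the integral (or its right-hand truncation at $D$) is what permits the series to converge and also guarantees the a.s.\ convergence of the telescoping series, so the $d_G$-compactness hypothesis is used precisely here.

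The main obstacle is the sub-Gaussian maximal inequality and the verification that the telescoping sum converges almost surely and in $L^1$; this requires a Borel--Cantelli argument using the Gaussian tail bound at each level together with the summability guaranteed by finiteness of the covering integral. Once these measurability and convergence issues are settled, the chaining bound and the recombination into the Dudley integral are essentially a bookkeeping exercise and give the stated constant $C$ as a universal number independent of $G$ and $\Omega$.
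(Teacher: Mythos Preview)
Your chaining argument is correct and is precisely the standard proof of Dudley's entropy bound. The paper itself does not supply a proof of this lemma: it simply quotes the result as Corollary~2.2.8 of van der Vaart and Wellner, \emph{Weak Convergence and Empirical Processes}. The proof in that reference is exactly the one you outline---geometric nets, telescoping along successive projections, the finite-maximum sub-Gaussian inequality at each level, and recognition of the resulting sum as a Riemann sum for the entropy integral---so your proposal and the cited source coincide in method.
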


\begin{lemma}[Theorem 6.5 of \cite{van2014probability}]\label{lbexgau}
Let $G(x)$ be as in Lemma \ref{concentrationGP}. For some universal constant $C$, we have
\begin{align*}
\mathbb{E} \sup_{ x\in \Omega} |G( x)| \geq C \sup_{\eta>0} \eta \sqrt{\log N(\eta,\Omega,d_G)}.
\end{align*}
\end{lemma}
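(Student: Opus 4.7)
The plan is to prove this as a version of the classical Sudakov minoration inequality, via a packing argument combined with the Sudakov--Fernique Gaussian comparison principle. Since the statement is scale-homogeneous in $\eta$ (in the sense that we maximize over $\eta$), it suffices to show that for each fixed $\eta>0$,
\begin{equation*}
\mathbb{E}\sup_{x\in\Omega}|G(x)|\geq C\,\eta\sqrt{\log N(\eta,\Omega,d_G)},
\end{equation*}
with $C$ a universal constant, and then take a supremum over $\eta$.

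First I would replace the covering number by a packing number. Let $M=M(\eta,\Omega,d_G)$ be the maximum cardinality of an $\eta$-separated set in $(\Omega,d_G)$, and pick points $x_1,\dots,x_M\in\Omega$ with $d_G(x_i,x_j)\geq\eta$ for $i\neq j$. The standard packing--covering duality gives $M\geq N(\eta,\Omega,d_G)$, since any maximal $\eta$-separated set is automatically an $\eta$-net. After this reduction, the problem becomes one about the finite Gaussian vector $(G(x_1),\dots,G(x_M))$.

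Next, I would introduce i.i.d.\ Gaussian variables $\xi_1,\dots,\xi_M\sim\mathcal{N}(0,\eta^2/2)$, chosen independent of $G$, so that for $i\neq j$,
\begin{equation*}
\mathbb{E}(\xi_i-\xi_j)^2=\eta^2\leq d_G^2(x_i,x_j)=\mathbb{E}(G(x_i)-G(x_j))^2.
\end{equation*}
By the Sudakov--Fernique comparison inequality (applied to the centered Gaussian vectors $\{\xi_i\}$ and $\{G(x_i)\}$), this increment domination implies $\mathbb{E}\max_i \xi_i\leq\mathbb{E}\max_i G(x_i)$. Since $|\,\cdot\,|\geq(\,\cdot\,)$ pointwise, we have $\sup_{x\in\Omega}|G(x)|\geq\max_i G(x_i)$, hence $\mathbb{E}\sup_{x\in\Omega}|G(x)|\geq\mathbb{E}\max_i\xi_i$. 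The final ingredient is the well-known lower bound $\mathbb{E}\max_{i\leq M}\xi_i\geq c\sigma\sqrt{\log M}$ for i.i.d.\ $\mathcal{N}(0,\sigma^2)$ variables, proved via a Jensen/moment generating function computation; with $\sigma=\eta/\sqrt{2}$ this yields $\mathbb{E}\max_i\xi_i\gtrsim \eta\sqrt{\log M}\gtrsim\eta\sqrt{\log N(\eta,\Omega,d_G)}$. Taking the supremum over $\eta>0$ delivers the claimed inequality.

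The main technical obstacle is the invocation of the Sudakov--Fernique comparison: a self-contained proof would typically go through Slepian's lemma, which in turn follows from an interpolation argument computing $\frac{d}{dt}\mathbb{E}F(tG+\sqrt{1-t^2}\xi)$ for a smooth monotone surrogate of the maximum. Since Sudakov--Fernique is a standard tool (and used elsewhere in this proof section), I would simply cite it rather than re-derive it. A secondary subtlety is the packing--covering ratio: one has $N(\eta)\leq M(\eta)\leq N(\eta/2)$, and the argument above uses only $M(\eta)\geq N(\eta)$, which is the easier direction and requires no loss in the radius.
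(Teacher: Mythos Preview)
Your argument is correct: this is precisely the classical Sudakov minoration, and the route via packing, Sudakov--Fernique comparison against an i.i.d.\ $\mathcal{N}(0,\eta^2/2)$ family, and the elementary lower bound $\mathbb{E}\max_{i\leq M}\xi_i\gtrsim\sigma\sqrt{\log M}$ is the standard proof. Note, however, that the paper does not supply its own proof of this lemma; it simply quotes the result as Theorem~6.5 of \cite{van2014probability}, so there is no in-paper argument to compare against. Your proof is in fact the same one given in that cited reference, so in that sense the approaches coincide.
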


To utilize the above lemmas to bound $\sup_{x\in\Omega}|Z(x)-\mathcal{I}_{\Phi,X}Z(x)|$, the main idea is to note that $$g_Z(x):=Z(x)-\mathcal{I}_{\Phi,X}Z(x)$$ is also a Gaussian process. So Lemma \ref{concentrationGP} can be applied directly. The remainder is to bound $\mathbb{E}\sup_{x\in\Omega}|Z(x)-\mathcal{I}_{\Phi,X}Z(x)|$. According to Lemmas \ref{lem:GPsupnorm} and \ref{lbexgau}, it is crucial to understand the natural distance, given by
$$d^2_{g_Z}(x,x')=\mathbb{E}[Z(x)-\mathcal{I}_{\Phi,X}Z(x)-Z(x')+\mathcal{I}_{\Phi,X}Z(x')]^2.$$

\subsubsection{Proof of Theorem \ref{Th:main}}
The main steps of proving Theorem \ref{Th:main} are: 1) bounding the diameter $D$; 2) connecting the natural distance $d_{g_Z}$ with the Euclidean distance; 3) bounding the covering integral and establishing the desired result.

\textbf{Step 1.} An upper bound of $D$ is given by
\begin{align}
    D^2&=\sup_{x,x'\in\Omega}\mathbb{E}[Z(x)-\mathcal{I}_{\Phi,X}Z(x)-Z(x')+\mathcal{I}_{\Phi,X}Z(x')]^2\nonumber\\
    &\leq 4\sup_{x\in\Omega}\mathbb{E}[Z(x)-\mathcal{I}_{\Phi,X}Z(x)]^2\nonumber\\
    &=4\sup_{x\in\Omega}Q^2(x)\leq C_1^2 h^{2\nu_0}_{X,\Omega}\rho^{2(\nu-\nu_0)}_{X,\Omega},\label{UB_D}
\end{align}
where the first inequality follows from the basic inequality $(x+y)^2\leq 2x^2+2y^2$; the last inequality follows from Lemma \ref{th:Q}.

\textbf{Step 2.} By Lemma \ref{lemGPRKHS},
\begin{align}\label{ND1}
    d_{g_Z}(x,x')=\sup_{\|f\|_\Psi\leq 1}|f(x)-\mathcal{I}_{\Phi,X}f(x)-f(x')+\mathcal{I}_{\Phi,X}f(x')|.
\end{align}
The H\"older space $C^{0,\alpha}_b(\mathbb{R}^d)$ for $0< \alpha\leq 1$ consists of continuous bounded function on $\mathbb{R}^d$, with its norm defined as
$$\|f\|_{C_b^{0,\alpha}(\mathbb{R}^d)}:=\sup_{x,x'\in\mathbb{R}^d,x\neq x'}\frac{|f(x)-f(x')|}{\|x-x'\|^\alpha}. $$
For $f\in\mathcal{N}_\Psi(\mathbb{R}^d)$,
Lemma \ref{lemEquivalence} implies that $f-\mathcal{I}_{\Phi,X}f\in W_2^{\nu_0+d/2}(\mathbb{R}^d)$. The Sobolev embedding theorem (see, for example, Theorem 4.47 of \cite{demengel2012functional}) implies the embedding relationship $ W^{\nu_0+d/2}_2(\mathbb{R}^d)\subset C_b^{0,\tau}(\mathbb{R}^d)$ with $\tau=\min(\nu_0,1)$, and
\begin{align}\label{embedding}
\|h\|_{C_b^{0,\tau}(\mathbb{R}^d)}\leq C_2\|h\|_{W^{\nu_0+d/2}_2(\mathbb{R}^d)},
\end{align}
for all $h\in W^{\nu_0+d/2}_2(\mathbb{R}^d)$ and a constant $C_2$. Therefore, we have $f-\mathcal{I}_{\Phi,X}f\in C_b^{0,\tau}(\mathbb{R}^d)$.

Thus by (\ref{ND1}), we have
\begin{align}
    d_{g_Z}(x,x')&\leq \sup_{\|f\|_\Psi\leq 1}\|f-\mathcal{I}_{\Phi,X}f\|_{C_b^{0,\tau}(\mathbb{R}^d)}\|x-x'\|^\tau\nonumber\\
    &\leq \sup_{\|f\|_\Psi\leq 1}C_2\|f-\mathcal{I}_{\Phi,X}f\|_{W^{\nu_0+d/2}_2(\mathbb{R}^d)}\|x-x'\|^\tau\nonumber\\
    &\leq \sup_{\|f\|_\Psi\leq 1}C_3\|f-\mathcal{I}_{\Phi,X}f\|_{\Psi}\|x-x'\|^\tau,\label{dbound1}
\end{align}
where the second inequality follows from (\ref{embedding}); the third inequality follows from Lemma \ref{lemEquivalence}.

Now we employ Lemma \ref{lem:bandlimited} again. Let $f_\gamma$ be the function asserted by Lemma \ref{lem:bandlimited} with $f|_X=f_\gamma|_X$. Similar to the proof of Lemma \ref{th:escape}, we have
\begin{align}
    &\|f-\mathcal{I}_{\Phi,X}f\|_{\Psi}\nonumber\\
    \leq &\|f-\mathcal{I}_{\Psi,X}f\|_{\Psi}+\|f_\gamma-\mathcal{I}_{\Psi,X}f_\gamma\|_{\Psi}+\|f_\gamma-\mathcal{I}_{\Phi,X}f_\gamma\|_{\Psi}\nonumber\\
    \leq &\|f\|_\Psi+\|f_\gamma\|_\Psi+\|f_\gamma-\mathcal{I}_{\Phi,X}f_\gamma\|_{\Psi},\nonumber\\
    \leq & C_4\|f\|_\Psi+\|f_\gamma-\mathcal{I}_{\Phi,X}f_\gamma\|_{\Psi}\label{normbound1}
\end{align}
where the second inequality follows from Lemma \ref{lemRKHSnorms}; the last inequality follows from Lemmas \ref{lemEquivalence} and \ref{lem:bandlimited}. Similarly, we have
\begin{align*}
&\|f_\gamma-\mathcal{I}_{\Phi,X}f_\gamma\|_{\Psi}\leq C_5\|f_\gamma-\mathcal{I}_{\Phi,X}f_\gamma\|_{W^{\nu_0+d/2}_2(\mathbb{R}^d)}\\
\leq& C_5 \|f_\gamma-\mathcal{I}_{\Phi,X}f_\gamma\|_{W^{\nu+d/2}_2(\mathbb{R}^d)}  \leq   C_6 \|f_\gamma-\mathcal{I}_{\Phi,X}f_\gamma\|_{\Phi}\\
\leq & C_6\|f_\gamma\|_\Phi \leq C_7 \|f_\gamma\|_{W^{\nu+d/2}_2(\mathbb{R}^d)}\leq  C_8 q_X^{-(\nu-\nu_0)}\|f\|_{W^{\nu_0+d/2}_2(\mathbb{R}^d)}\\
\leq & C_9 q_X^{-(\nu-\nu_0)}\|f\|_\Psi,
\end{align*}
which, together with (\ref{dbound1}) and (\ref{normbound1}), yields
$$d_{g_Z}(x,x')\leq C_{10} q^{-(\nu-\nu_0)}_X\|x-x'\|^\tau=C_{10}h^{\nu_0-\nu}_{X,\Omega}\rho_{X,\Omega}^{\nu-\nu_0}\|x-x'\|^\tau. $$

Therefore, by the definition of the covering number, we have
\begin{eqnarray}\label{e1new}
N(\epsilon,\Omega,d_{g_Z})\leq N((\epsilon/ C_{10}h^{\nu_0-\nu}_{X,\Omega}\rho_{X,\Omega}^{\nu-\nu_0})^{1/\tau},\Omega,\|\cdot\|).
\end{eqnarray}
The right side of (\ref{e1new}) involves the covering number of a Euclidean ball, which is studied in the literature; see Lemma 2.5 of \cite{geer2000empirical}. This result leads to the bound
\begin{align}\label{eq:entropyBoundC1}
\log N(\epsilon,\Omega,d_{g_Z})\leq C_{11}\log\bigg( 1 + C_{12}\left(\frac{ h^{\nu_0-\nu}_{X,\Omega}\rho_{X,\Omega}^{\nu-\nu_0}}{\epsilon}\right)^{1/\tau}\bigg).
\end{align}

\textbf{Step 3.} For any $x_1\in X$, the interpolation property implies $g_Z(x_1)=0$. Using our findings in Steps 1 and 2, together with Lemma \ref{lem:GPsupnorm}, we have
\begin{align}\label{ieqintpart1}
    &\mathbb{E}\sup_{x\in\Omega}|g_Z(x)|=\mathbb{E}\sup_{x\in\Omega}|g_Z(x)-g_Z(x_1)|\nonumber\\
    \leq &\mathbb{E}\sup_{x,x'\in\Omega}|g_Z(x)-g_Z(x')|\nonumber\\
    \leq & C_{13} \int_0^{C_1 h_{X,\Omega}^{\nu_0}\rho^{\nu-\nu_0}_{X,\Omega}}\sqrt{\log\bigg( 1 + C_{12}\left(\frac{ h^{\nu_0-\nu}_{X,\Omega}\rho_{X,\Omega}^{\nu-\nu_0}}{\epsilon}\right)^{1/\tau}\bigg)} d\epsilon\nonumber\\
    = & C_{13}h_{X,\Omega}^{\nu_0}\rho^{\nu-\nu_0}_{X,\Omega} \int_0^{C_1} \sqrt{\log\bigg( 1 + C_{12}\left(\frac{ h^{-\nu}_{X,\Omega}}{t}\right)^{1/\tau}\bigg)} dt,
\end{align}
where the second equality is obtained by the change of variables. Note that for any $b>\frac{1}{C_1}$ and $a>0$, taking $C'=\max\{C_1,1\}$ leads to $1 + b^a \leq (1+C'b)^a + C'b(1+C'b)^a\leq (1+C'b)^{a+1}$. Thus we have
\begin{align*}
\log\bigg( 1 + C_{12}\left(\frac{ h^{-\nu}_{X,\Omega}}{t}\right)^{1/\tau}\bigg)
    \leq &\bigg(1 + \frac{1}{\tau}\bigg)\log\bigg( 1 + C_{14}\frac{ h^{-\nu}_{X,\Omega}}{t}\bigg)
\end{align*}
for $t\in (0,C_1]$.

Therefore, the integral \eqref{ieqintpart1} can be further bounded by
\begin{align}\label{ieqintpart2}
     C_{15}h_{X,\Omega}^{\nu_0}\rho^{\nu-\nu_0}_{X,\Omega} \int_0^{C_1} \sqrt{\log\bigg( 1 + \frac{C_{14}}{h^{\nu}_{X,\Omega}t}\bigg)} dt.
\end{align}
We then apply the Cauchy-Schwarz inequality to get
\begin{align}\label{ieqintpart3}
     & C_{15}h_{X,\Omega}^{\nu_0}\rho^{\nu-\nu_0}_{X,\Omega} \int_0^{C_1} \sqrt{\log\bigg( 1 + \frac{C_{14}}{h^{\nu}_{X,\Omega}t}\bigg)} dt\nonumber\\
     \leq & C_{15}C_1^{1/2}h_{X,\Omega}^{\nu_0}\rho^{\nu-\nu_0}_{X,\Omega} \bigg(\int_0^{C_1} \log\bigg( 1 +\frac{C_{14}}{h^{\nu}_{X,\Omega}t}\bigg) dt\bigg)^{1/2}\nonumber\\
     = & C_{15}C_1^{1/2}h_{X,\Omega}^{\nu_0}\rho^{\nu-\nu_0}_{X,\Omega} \bigg( C_{14}h^{-\nu}_{X,\Omega}\log\bigg(1 + \frac{C_1h^{\nu}_{X,\Omega}}{C_{14}}\bigg) + C_1\log\bigg(1+\frac{C_{14}}{C_1h^{\nu}_{X,\Omega}}\bigg) \bigg)^{1/2}.
\end{align}
By the basic inequality $\log(1+x)\leq x$, we conclude that
\begin{align*}
    C_{14}h^{-\nu}_{X,\Omega}\log\bigg(1 + \frac{C_1h^{\nu}_{X,\Omega}}{C_{14}}\bigg) \leq C_1.
\end{align*}
Consequently, by incorporating the condition $h_{X,\Omega}\leq 1$, we get
\begin{align}\label{ieqintpart4}
    & C_{14}h^{-\nu}_{X,\Omega}\log\bigg(1 + \frac{C_1h^{\nu}_{X,\Omega}}{C_{14}}\bigg) + C_1\log\bigg(1+\frac{C_{14}}{C_1h^{\nu}_{X,\Omega}}\bigg)\nonumber\\
    \leq & C_1 + C_1\log\bigg(1+\frac{C_{14}}{C_1h^{\nu}_{X,\Omega}}\bigg) \leq C_{16}\log\bigg(1+\frac{C_{17}}{h_{X,\Omega}}\bigg),
\end{align}
where in the last equality, we utilize $1 + b^a \leq (1+C'b)^{a+1}$ again. Combining \eqref{ieqintpart1}-\eqref{ieqintpart4}, we have shown that
\begin{align*}
    \mathbb{E}\sup_{x\in\Omega}|g_Z(x)| \leq C_{18}h_{X,\Omega}^{\nu_0}\rho^{\nu-\nu_0}_{X,\Omega}\sqrt{\log\bigg(1+\frac{C_{17}}{h_{X,\Omega}}\bigg)}.
\end{align*}

By Lemma \ref{th:Q}, we have the following upper bound for the maximum variance
$$\sigma^2_{g_Z}=\max_{x\in\Omega}Q^2(x)\leq C h^{2\nu_0}_{X,\Omega}\rho^{2(\nu-\nu_0)}_{X,\Omega}. $$
Then we complete the proof of Theorem \ref{Th:main} by invoking (\ref{concentrationGP1}) of Lemma \ref{concentrationGP}.

\subsubsection{Proof of Theorem \ref{th:low}}

According to Lemma \ref{lbexgau}, the key is to find a lower bound of $N(\epsilon,\Omega,d_{g_Z})$. The idea is as follows. Suppose for any $n$-point set $\{y_1,\ldots,y_n\}\subset \Omega$, we can find $y_0\in\Omega$ such that $\min_{1\leq j\leq n}d_{g_Z}(y_0,y_j)\geq \epsilon_n$ for some number $\epsilon_n>0$. Then $\Omega$ can not be covered by $n$ $(\epsilon_n,d_{g_Z})$-balls, and thus $N(\epsilon_n,\Omega,d_{g_Z})\geq n$.

Now take an arbitrary $n$-point set $Y=\{y_1,\ldots,y_n\}\subset\Omega$. For each $y_j$,
\begin{align*}
    d^2_{g_Z}(y,y_j) = & \mathbb{E}(Z( y)-\mathcal{I}_{\Phi,X}Z(y) - Z( y_j)+\mathcal{I}_{\Phi,X}Z(y_j)))^2 \nonumber\\
    \geq & \mathbb{E}(Z( y)-\mathcal{I}_{\Psi,X\cup Y}Z(y))^2,
\end{align*}
because $\mathcal{I}_{\Psi,X\cup Y}Z(y)$ is the best linear predictor of $Z(y)$ given $Z(x_1),\ldots,Z(x_n),Z(y_1),\ldots,$ $Z(y_n)$, and $\mathcal{I}_{\Phi,X}Z(y) + Z( y_j)-\mathcal{I}_{\Phi,X}Z(y_j)$ is a linear predictor and thus should have a greater mean squared prediction error. Corollary \ref{coro:Qlower} implies that
$$\sup_{y\in\Omega}\mathbb{E}(Z( y)-\mathcal{I}_{\Psi,X\cup Y}Z( y))^2=\sup_{y\in\Omega}P^2_{\Psi,X\cup Y}(y)\geq C^2_1(2n)^{-\frac{2\nu_0}{d}}.$$ Therefore, there exists $y_0\in\Omega$ such that $d^2_{g_Z}(y_0,y_j) \geq C^2_1(2n)^{-\frac{2\nu_0}{d}}/4$ for each $y_j$, which implies $N(C_1(2n)^{-\frac{\nu_0}{d}}/2,\Omega,d_{g_Z})\geq n$.
Now we invoke Lemma \ref{lbexgau} with $\eta=C_1(2n)^{-\frac{\nu_0}{d}}/2$ to obtain that
\begin{align}\label{lbofmmeasure2}
\mathbb{E} \sup_{ x\in \Omega} g_Z(x) \geq C_2 n^{-\frac{\nu_0}{d}} \sqrt{\log n}.
\end{align}
The desired result then follows from (\ref{concentrationGP2}) of Lemma \ref{concentrationGP}.

\subsection{$L_p$ results with $1\leq p<\infty$}
\label{sec:proofLp}
Our results for the $L_p$ norms with $1\leq p<\infty$ replies on a counterpart of the Borell-TIS inequality (Lemma \ref{concentrationGP}) under the $L_p$ norms. Such a result is given by Lemma \ref{thm211}; its proof is presented in Section \ref{Sec:proof211}.

\begin{lemma}\label{thm211}
Suppose $\Omega$ satisfies Condition \ref{C4}. Let $G$ be a zero-mean Gaussian process on $\Omega\subset\mathbb{R}^d$ with continuous sample paths almost surely and with a finite maximum pointwise variance $\sigma^2_G = \sup_{x\in \Omega} \mathbb{E}G(x)^2<\infty$. Then for all $u>0$ and $1\leq p < \infty$, we have
\begin{align*}
    & \mathbb{P}\left(\|G\|_{L_p(\Omega)} - \mathbb{E} \|G\|_{L_p(\Omega)} > u\right) \leq e^{-u^2/(2C_p\sigma^2_G)},\\
    & \mathbb{P}\left(\|G\|_{L_p(\Omega)} -\mathbb{E} \|G\|_{L_p(\Omega)}  <  - u\right) \leq e^{-u^2/(2C_p\sigma^2_G)},
\end{align*}
with $C_p={\rm Vol}(\Omega)^{2/p}$. Here ${\rm Vol}(\Omega)$ denotes the volume of $\Omega$.
\end{lemma}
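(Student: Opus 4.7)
The plan is to apply the infinite-dimensional Borell--Sudakov--Tsirelson (BST) concentration inequality to the norm of $G$ viewed as a random element in the separable Banach space $L_p(\Omega)$. Since $\Omega$ is compact and the sample paths of $G$ are continuous almost surely, $\|G\|_{L_p(\Omega)}<\infty$ a.s., and measurability of $G$ as an $L_p(\Omega)$-valued random variable follows from separability. The BST inequality then asserts that for any centered Gaussian element $G$ in a separable Banach space $B$ with norm $\|\cdot\|_B$,
\begin{equation*}
\mathbb{P}\bigl(\|G\|_B-\mathbb{E}\|G\|_B \geq u\bigr)\leq \exp\bigl(-u^2/(2\sigma_*^2)\bigr),\qquad
\mathbb{P}\bigl(\|G\|_B-\mathbb{E}\|G\|_B \leq -u\bigr)\leq \exp\bigl(-u^2/(2\sigma_*^2)\bigr),
\end{equation*}
where the weak variance is $\sigma_*^2=\sup_{\xi\in B^*,\,\|\xi\|_{B^*}\leq 1}\mathbb{E}[\xi(G)^2]$. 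All that remains, therefore, is to verify $\sigma_*^2\leq C_p\sigma_G^2$ for $B=L_p(\Omega)$.

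For this, recall that $B^*=L_q(\Omega)$ with $q$ the H\"older conjugate of $p$, and every continuous linear functional acts as $\xi(G)=\int_\Omega \phi(x) G(x)\,dx$ for some $\phi\in L_q(\Omega)$ with $\|\phi\|_{L_q(\Omega)}\leq 1$. I would then compute
\begin{equation*}
\mathbb{E}[\xi(G)^2]=\int_\Omega\int_\Omega \phi(x)\phi(y)\,\mathbb{E}[G(x)G(y)]\,dx\,dy.
\end{equation*}
Bounding the covariance kernel by Cauchy--Schwarz gives $|\mathbb{E}[G(x)G(y)]|\leq\sigma_G^2$, so $\mathbb{E}[\xi(G)^2]\leq\sigma_G^2\|\phi\|_{L_1(\Omega)}^2$. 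A second application of H\"older's inequality yields $\|\phi\|_{L_1(\Omega)}\leq {\rm Vol}(\Omega)^{1/p}\|\phi\|_{L_q(\Omega)}$, which combined with $\|\phi\|_{L_q(\Omega)}\leq 1$ gives $\mathbb{E}[\xi(G)^2]\leq\sigma_G^2\,{\rm Vol}(\Omega)^{2/p}=C_p\sigma_G^2$. Taking the supremum over $\phi$ shows $\sigma_*^2\leq C_p\sigma_G^2$, and plugging this into the BST inequality gives the two one-sided bounds claimed in the lemma.

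The only subtle point is the rigorous justification of the BST inequality in this infinite-dimensional setting. If one prefers to avoid quoting the Banach-space form directly, an equivalent route is to discretize: pick a sequence of increasingly fine partitions $\{\Omega_i^{(n)}\}$ of $\Omega$ with representative points $x_i^{(n)}\in\Omega_i^{(n)}$ and cell volumes $\mu_i^{(n)}$, and define the Riemann approximants $F_n=\bigl(\sum_i |G(x_i^{(n)})|^p\mu_i^{(n)}\bigr)^{1/p}$. Each $F_n$ is a function of the finite-dimensional Gaussian vector $(G(x_i^{(n)}))_i$, and a direct computation as above shows its weak variance (in the sense of Gaussian concentration for Lipschitz functions of finite-dimensional Gaussians) is bounded by $C_p\sigma_G^2$, uniformly in $n$. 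Gaussian concentration in $\mathbb{R}^{N_n}$ then yields the desired tail inequalities for $F_n$; almost-sure continuity of $G$ on the compact $\Omega$ ensures $F_n\to\|G\|_{L_p(\Omega)}$ a.s.\ and in $L^1$, so passing to the limit finishes the proof. The main work in either route is the bookkeeping involved in bounding $\sigma_*^2$ by the two invocations of H\"older (one on the covariance, one on $\phi$) that yield exactly the exponent $2/p$ in $C_p$.
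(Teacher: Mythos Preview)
Your proposal is correct. The paper takes precisely your alternative ``discretization'' route: it partitions $\Omega$, forms the Riemann approximant $\|G_n\|_w=(\sum_j w_{nj}|G(x_{nj})|^p)^{1/p}$, writes the Gaussian vector as $AW$ with $A^TA$ the covariance matrix, shows by Cauchy--Schwarz that $h(x)=\|Ax\|_w$ has Lipschitz constant $\mathrm{Vol}(\Omega)^{1/p}\sigma_{\Omega_n}$, applies finite-dimensional Gaussian concentration, and passes to the limit via dominated convergence (using the Borell--TIS bound $\mathbb{E}\sup_x|G(x)|<\infty$ to dominate).

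Your primary route---invoking the Banach-space Borell--Sudakov--Tsirelson inequality directly and bounding the weak variance $\sigma_*^2$ via the dual pairing with $L_q(\Omega)$---is genuinely different and more streamlined: it replaces the entire discretization-and-limit argument by one black-box citation plus the two H\"older steps you describe. The tradeoff is that the paper's version is more self-contained, needing only the finite-dimensional Lipschitz concentration lemma rather than the full infinite-dimensional BST theory. Both arrive at exactly the same constant $C_p=\mathrm{Vol}(\Omega)^{2/p}$, and the two H\"older/Cauchy--Schwarz steps that produce the exponent $2/p$ in your weak-variance bound correspond, in the paper's argument, to bounding $\|e_j^TA\|\leq\sigma_{\Omega_n}$ and summing the weights $\sum_j w_{nj}=\mathrm{Vol}(\Omega)$.
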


\begin{remark}
Similar to the Borell-TIS inequality (Lemma \ref{concentrationGP}), the variation of $L_p$ norm of $G$ in Lemma \ref{thm211} can be controlled by its pointwise fluctuations. In fact, by letting $p\rightarrow\infty$, Lemma \ref{thm211} becomes Lemma \ref{concentrationGP}.
\end{remark}

As before, let $g_Z(x) = Z( x)-\mathcal{I}_{\Phi,X}Z( x)$, which is still a zero-mean Gaussian process; let $\sigma_{g_Z}^2 = \sup_{x\in \Omega}\mathbb{E}g_Z(x)^2$. In view of Lemma \ref{thm211}, the remaining task is to bound $\mathbb{E}\|g_Z\|_{L_p(\Omega)}$. This will be done by employing the known bounds of $\mathbb{E}\|g_Z\|^2_{L_2(\Omega)}$, as in Lemmas \ref{th:Q} and \ref{lem:Qlower}, together with Jensen's inequality and some other basic inequalities.

\subsubsection{Proof of Lemma \ref{thm211}}\label{Sec:proof211}
We will use the Gaussian concentration inequality given by Lemma \ref{conlips}. Its proof can be found in \citet[Lemma 2.1.6]{adler2009random}.
We say that $L$ is a Lipchitz constant of the function $h:\mathbb{R}^k \rightarrow \mathbb{R}$, if $|h(x)-h(y)|\leq L\|x-y\|$ for all $x,y\in\mathbb{R}^k$.
\begin{lemma}[Gaussian concentration inequality]\label{conlips}
Let $G_k$ be a k-dimensional vector of centered, unit-variance, independent Gaussian variables. If $h:\mathbb{R}^k \rightarrow \mathbb{R}$ has Lipschitz constant $L$, then for all $u>0$.
\begin{align*}
    \mathbb{P}(h(G_k) - \mathbb{E}h(G_k)>u)\leq e^{-u^2/(2L^2)}.
\end{align*}
\end{lemma}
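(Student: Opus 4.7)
My plan is to reduce the claim to the finite-dimensional Gaussian concentration inequality of Lemma \ref{conlips} by approximating $\|G\|_{L_p(\Omega)}$ with Riemann sums over a refining partition of $\Omega$. Concretely, I would fix a measurable partition $\{A_1,\ldots,A_N\}$ of $\Omega$ with representatives $x_i\in A_i$, and set
\[
h_N(Y) = \Bigl(\sum_{i=1}^N |Y_i|^p\, |A_i|\Bigr)^{1/p}, \quad Y = (G(x_1),\ldots,G(x_N))^T \sim \mathcal{N}(0,\Sigma),
\]
where $\Sigma_{ij} = \mathrm{Cov}(G(x_i), G(x_j))$ and $|A_i|$ is the Lebesgue measure of $A_i$. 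Writing $Y = \Sigma^{1/2}Z$ with $Z \sim \mathcal{N}(0, I_N)$ expresses $h_N$ as a function $\widetilde{h}_N(z) := h_N(\Sigma^{1/2}z)$ of a standard Gaussian vector, to which Lemma \ref{conlips} applies provided we can control its Lipschitz constant.

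The main technical step is precisely this Lipschitz estimate. Since $h_N(Y)$ is the weighted $\ell_p$ norm of $Y$, it is $1$-Lipschitz with respect to that norm, so
\[
|h_N(Y) - h_N(Y')| \leq \Bigl(\sum_{i=1}^N |Y_i - Y_i'|^p\, |A_i|\Bigr)^{1/p}.
\]
Writing $Y - Y' = \Sigma^{1/2}(z-z')$ and applying Cauchy--Schwarz to the $i$-th row of $\Sigma^{1/2}$ (whose squared Euclidean norm equals $(\Sigma^{1/2}\Sigma^{1/2})_{ii} = \Sigma_{ii}$), I obtain $|Y_i - Y_i'| \leq \sqrt{\Sigma_{ii}}\, \|z-z'\| \leq \sigma_G\, \|z-z'\|$. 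Substituting this pointwise bound and summing the weights yields
\[
|\widetilde{h}_N(z) - \widetilde{h}_N(z')| \leq \sigma_G\, \mathrm{Vol}(\Omega)^{1/p}\, \|z-z'\|,
\]
so $\widetilde{h}_N$ has Lipschitz constant $L$ with $L^2 = C_p\, \sigma_G^2$. Lemma \ref{conlips} then gives $\mathbb{P}(\widetilde{h}_N - \mathbb{E}\widetilde{h}_N > u) \leq \exp(-u^2/(2 C_p \sigma_G^2))$, and the symmetric statement follows by applying the same lemma to $-\widetilde{h}_N$, which has the same Lipschitz constant.

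Finally I would pass to the limit along a sequence of partitions with mesh tending to zero. Continuity of the sample paths on the compact set $\Omega$ implies that $h_N(Y) \to \|G\|_{L_p(\Omega)}$ almost surely by standard Riemann integration of the continuous integrand $|G|^p$. Each $h_N(Y)$ is dominated by $\mathrm{Vol}(\Omega)^{1/p}\, \sup_{x\in\Omega}|G(x)|$, whose expectation is finite by Lemma \ref{concentrationGP}, so dominated convergence gives $\mathbb{E}\widetilde{h}_N \to \mathbb{E}\|G\|_{L_p(\Omega)}$. Combined with Fatou's lemma applied to the events $\{\widetilde{h}_N - \mathbb{E}\widetilde{h}_N > u - \varepsilon\}$, the concentration inequality transfers to the limit, yielding both stated bounds. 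The main obstacle is the Lipschitz computation above, whose cleanness hinges on the fact that the diagonal of $\Sigma^{1/2}\Sigma^{1/2}$ is uniformly bounded by $\sigma_G^2$; the discretization and limit passage are essentially routine once sample-path continuity is invoked.
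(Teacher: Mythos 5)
There is a genuine gap, and it is a mismatch of target rather than a flaw in any individual step: what you have written is not a proof of Lemma \ref{conlips} at all. Your argument opens by ``reducing the claim to the finite-dimensional Gaussian concentration inequality of Lemma \ref{conlips}'' and then invokes that lemma as its main tool --- but Lemma \ref{conlips} is precisely the statement you were asked to prove, so the argument is circular with respect to the assigned claim. What you have actually reconstructed is the paper's proof of Lemma \ref{thm211} (the $L_p$ analogue of the Borell--TIS inequality for the Gaussian process $G$ on $\Omega$): the partition of $\Omega$, the weighted $\ell_p$ Riemann sum $h_N$, the Lipschitz bound via the rows of $\Sigma^{1/2}$ giving $L = \mathrm{Vol}(\Omega)^{1/p}\sigma_G$, and the passage to the limit by dominated convergence all match that proof closely and correctly. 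But none of this establishes the concentration inequality for a Lipschitz function of a standard Gaussian vector, which is the content of Lemma \ref{conlips}.

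The statement itself is the classical Gaussian concentration (Borell--TIS/Cirelson--Ibragimov--Sudakov) inequality; the paper does not prove it either, but cites Lemma 2.1.6 of \cite{adler2009random}. A genuine proof would require one of the standard arguments --- for instance, the Gaussian isoperimetric inequality combined with the fact that a Lipschitz function's sublevel sets enlarge controllably, or Pisier's interpolation along the Ornstein--Uhlenbeck semigroup, which bounds $\mathbb{E}\exp\{\lambda(h(G_k)-\mathbb{E}h(G_k))\}$ by $\exp\{\lambda^2 L^2/2\}$ for smooth $h$ and then applies a Chernoff bound and a smoothing argument for general Lipschitz $h$. If your intent was to prove Lemma \ref{thm211}, your argument is essentially the paper's own; but as a proof of Lemma \ref{conlips} it does not get off the ground.
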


The proof proceeds by approximating of the integral $\|G\|^p_{L_p(\Omega)}=\int_\Omega G(x)^p d x$ by a Riemann sum. For each $n=1,2,\ldots$, let $\{\Omega_{nj}\}_{j=1}^n$ be a partition of $\Omega$ such that
\begin{align}\label{thm211partition}
\max_{1\leq j\leq n} \text{Diam}(\Omega_{nj}) \rightarrow 0, \text{ as } n\rightarrow\infty,
\end{align}
where $\text{Diam}(\Omega_{nj})$ denotes the (Euclidean) diameter of $\Omega_{nj}$. We have $\Omega = \cup_{j=1}^n \Omega_{nj}$ and $\sum_{j=1}^n \text{Vol}(\Omega_{nj}) = \text{Vol}(\Omega)$. Let $w_{nj}=\text{Vol}(\Omega_{nj})$ and define
\begin{align*}
    \|a\|_w = \bigg(\sum_{j=1}^n w_{nj}|a_j|^p\bigg)^{1/p}
\end{align*}
for a vector $a=(a_1,...,a_n)^T$. Let $G_n=(G_{n1},...,G_{nn})^T$ with $G_{nj} = G(x_{nj})$ for some $x_{nj} \in \Omega_{nj}$. Therefore, $\|G_n\|_w$ is an approximate of $\|G\|_{L_p(\Omega)}$.

We first prove a similar result for $G_n$, and then arrive at the desired results by letting $n\rightarrow \infty$. Let $K$ be the $n\times n$ covariance matrix of $G_n$ on $\Omega$, with components $K_{ij} = \mathbb{E}(G_{ni}G_{nj})$. Define  $\sigma_{\Omega_n}^2 = \max_{1\leq j\leq n} \mathbb{E}G_{nj}^2$. Let $W$ be a vector of independent, standard Gaussian variables, and $A$ be a matrix such that $A^TA = K$. Thus $G_n$ has the same distribution as $AW$.

Consider the function $h(x) = \|Ax\|_w$. Let $e_j$ be the vector with one in the $j$th entry and zeros in other entries. Denote the $j$th entry of a vector $v$ by $[v]_j$. Then we have
\begin{align*}
    |h(x) - h(y)| = & |\|Ax\|_w - \|Ay\|_w|  \leq \|A(x - y)\|_w\\
   = & \bigg(\sum_{j=1}^n w_{nj}\left|[A(x-y)]_j\right|^p \bigg)^{1/p} =  \bigg(\sum_{j=1}^n w_{nj}|e_j^T A(x-y)|^p \bigg)^{1/p}\\
   \leq & \bigg(\sum_{j=1}^n w_{nj}\|e_j^T A\|^p\|x-y\|^p \bigg)^{1/p},
\end{align*}
where the first inequality follows from the triangle inequality (i.e., the Minkowski inequality); and the last inequality follows from the Cauchy-Schwarz inequality. Noting that for each $j$,
\begin{align*}
    \|e_j^T A\|^2 = e_j^TA^TAe_j = \mathbb{E}(G_{nj}^2)\leq \sigma^2_{\Omega_n},
\end{align*}
we have
\begin{align*}
    |h(x) - h(y)| \leq & \text{Vol}(\Omega)^{1/p}\sigma_{\Omega_n}\|x-y\|,
\end{align*}
which implies $h$ is a Lipschitz continuous function with Lipschitz constant $\text{Vol}(\Omega)^{1/p}\sigma_{\Omega_n}$. Because $G_n$ and $AW$ have the same distribution, and together with Lemma \ref{conlips}, we obtain
\begin{align}\label{thm211neq1}
    \mathbb{P}\left(\|G_n\|_w - \mathbb{E} \|G_n\|_w > u\right) \leq e^{-u^2/(2C_p\sigma^2_{\Omega_n})},
\end{align}
where $C_p=\text{Vol}(\Omega)^{2/p}$. Similarly, by considering $h(x) = -\|Ax\|_w$, we can obtain
\begin{align}\label{thm211neq2}
    \mathbb{P}\left(\|G_n\|_w -\mathbb{E} \|G_n\|_w  <  - u\right) \leq e^{-u^2/(2C_p\sigma^2_{\Omega_n})}.
\end{align}
To prove the desired results, we let $n\rightarrow\infty$ in (\ref{thm211neq1}) and (\ref{thm211neq2}). First we show that the left-hand sides of (\ref{thm211neq1}) and (\ref{thm211neq2}) tend to $\mathbb{P}\left(\|G\|_{L_p(\Omega)} - \mathbb{E} \|G\|_{L_p(\Omega)} > u\right)$ and \newline $\mathbb{P}\left(\|G\|_{L_p(\Omega)} - \mathbb{E} \|G\|_{L_p(\Omega)}< -u\right)$, respectively. According to Lebesgue's dominated convergence theorem, it suffices to prove that
\begin{align}\label{thm211as}
\|G_n\|_w\rightarrow \|G\|_{L_p(\Omega)},~~ a.s., \text{ as } n\rightarrow\infty,
\end{align}
and
\begin{align}\label{thm211mean}
\mathbb{E}\|G_n\|_w\rightarrow \mathbb{E}\|G\|_{L_p(\Omega)},\text{ as } n\rightarrow\infty,
\end{align}
as the indicator function is dominated by one. Since $G$ has continuous sample paths with probability one, (\ref{thm211as}) is an immediate consequence of the convergence of Riemann integrals. Now we prove (\ref{thm211mean}).
Note that $\|G_n\|_w \leq \text{Vol}(\Omega)^{1/p}\sup_{x\in \Omega}|G(x)|$ and Lemma \ref{concentrationGP} suggests that $\mathbb{E}\sup_{x\in \Omega}|G(x)|<\infty$. Thus Lebesgue's dominated convergence theorem implies (\ref{thm211mean}). Now we consider the right-hand sides of (\ref{thm211neq1}) and (\ref{thm211neq2}). To prove the desired results, it remains to prove $\sigma^2_{\Omega_n}\rightarrow\sigma^2_G$. By the definition of $\sigma^2_{\Omega_n}$, we have
\begin{align}\label{thm211sigma}
\max_{1\leq j\leq n}\inf_{x\in\Omega_{nj}}\mathbb{E}G(x)^2\leq \sigma^2_{\Omega_n}\leq \sigma^2_G.
\end{align}
The almost sure continuity of $G$ implies that $\mathbb{E}G(x)^2$ is continuous in $x$. Since $\Omega$ is compact, $\mathbb{E}G(x)^2$ is also uniformly continuous. Therefore, the condition of the partitions in (\ref{thm211partition}) implies
\begin{align}\label{thm211inf}
\max_{1\leq j\leq n}\inf_{x\in\Omega_{nj}}\mathbb{E}G^2(x)\rightarrow \sigma^2_G, \text{ as } n\rightarrow\infty.
\end{align}
Combining (\ref{thm211sigma}) and (\ref{thm211inf}) proves $\sigma^2_{\Omega_n}\rightarrow\sigma^2_G$, which completes the proof.

\subsubsection{Proof of Theorem \ref{Th:wtwLp}}
By Fubini's theorem,
\begin{align}\label{upElp}
    \mathbb{E} \|Z-\mathcal{I}_{\Phi,X}Z\|_{L_p(\Omega)}^p & = \int_{x\in \Omega}\mathbb{E}|Z(x)-\mathcal{I}_{\Phi,X}Z(x)|^p dx\nonumber\\
    & = \int_{x\in \Omega} \frac{2^{p/2}\Gamma(\frac{p+1}{2})}{\sqrt{\pi}}\big(\mathbb{E}(Z(x)-\mathcal{I}_{\Phi,X}Z(x))^2\big)^{p/2} dx\nonumber\\
    & \leq C_1 \sigma_{g_Z}^p.
\end{align}
The second equality of \eqref{upElp} is true because $Z(x)-\mathcal{I}_{\Phi,X}Z(x)$ follows a normal distribution with mean zero, and the absolute moments of a normal random variable $X_\sigma\sim N(0,\sigma^2)$ can be expressed by its variance as
\begin{eqnarray}\label{Gaussianmoments}
\mathbb{E}|X_\sigma|^p=\sigma^p \cdot \frac{2^{p/2}\Gamma\left(\frac{p+1}{2}\right)}{\sqrt{\pi}};
\end{eqnarray}
see \cite{walck1996hand}.
By combining Lemma \ref{thm211} and \eqref{upElp}, we have
\begin{align}\label{Lpineqpf1}
    e^{-u^2/2C_p\sigma^2_{g_Z}} & \geq \mathbb{P}\left(\|g_Z\|_{L_p(\Omega)}  > \mathbb{E} \|g_Z\|_{L_p(\Omega)} + u\right) \nonumber\\
    & \geq \mathbb{P}\left(\|g_Z\|_{L_p(\Omega)}^p  > 2^{p-1}(\mathbb{E} \|g_Z\|_{L_p(\Omega)}^p + u^p)\right) \nonumber\\
    & \geq \mathbb{P}\left(\|g_Z\|_{L_p(\Omega)}^p  > 2^{p-1}(C_1 \sigma_{g_Z}^p + u^p)\right) \nonumber\\
    & \geq \mathbb{P}\left(\|g_Z\|_{L_p(\Omega)}  > 2^{1-1/p}(C_1^{1/p} \sigma_{g_Z} + u)\right),
\end{align}
where the second inequality follows from the Jensen's inequality and the $c_r$-inequality. Combining Lemma \ref{th:Q} and \eqref{Lpineqpf1} completes the proof.

\subsubsection{Proof of Theorem \ref{Th:mainLp}}

The proof of Theorem \ref{Th:mainLp} is similar to that of Theorem \ref{Th:wtwLp}. The only difference here is that at the last step we employ Lemma \ref{th:Qunder} instead of Lemma \ref{th:Q}.

\subsubsection{Proof of Theorem \ref{coro:lowLp}}

Take a quasi-uniform design $X'\subset\Omega$ with card$(X')=n$. Obviously $h_{X\cup X',\Omega}\leq h_{X',\Omega}$. By Proposition 14.1 of \cite{wendland2004scattered}, $h_{X',\Omega} \leq C n^{-1/d}$. By H\"older's inequality, we have $\|f\|_{L_2(\Omega)}\leq \|f\|_{L_1(\Omega)}^{1/4}\|f\|_{L_3(\Omega)}^{3/4}$ for any continuous function $f$, which implies
\begin{align}\label{pl2inter}
 & \bigg(\int_{x\in \Omega}\mathbb{E}(Z(x)-\mathcal{I}_{\Psi,X\cup X'}Z(x))^2 dx\bigg)^{1/2}\nonumber\\
 \leq & \bigg(\int_{x\in \Omega}\big(\mathbb{E}(Z(x)-\mathcal{I}_{\Psi,X\cup X'}Z(x))^2\big)^{1/2} dx\bigg)^{1/4} \bigg(\int_{x\in \Omega}\big(\mathbb{E}(Z(x)-\mathcal{I}_{\Psi,X\cup X'}Z(x))^2\big)^{3/2} dx\bigg)^{1/4}.
\end{align}
Applying Lemma \ref{th:Qunder} to $\sup_{x\in \Omega}\mathbb{E}(Z(x)-\mathcal{I}_{\Psi,X\cup X'}Z(x))^2$ with $\nu = \nu_0$ yields
\begin{align}\label{ub3ininter}
    & \bigg(\int_{x\in \Omega}\big(\mathbb{E}(Z(x)-\mathcal{I}_{\Psi,X\cup X'}Z(x))^2\big)^{3/2} dx\bigg)^{1/4} \nonumber\\
    \leq & C_1\bigg(\sup_{x\in \Omega} \big(\mathbb{E}(Z(x)-\mathcal{I}_{\Psi,X\cup X'}Z(x))^2\big)^{3/2}\bigg)^{1/4}\nonumber\\
    \leq & C_2 h_{X\cup X',\Omega}^{\frac{3\nu_0}{4}}
    \leq C_2 h_{ X',\Omega}^{\frac{3\nu_0}{4}} \leq C_3 n^{-\frac{3\nu_0}{4d}}.
\end{align}
The left hand side of \eqref{pl2inter} can be bounded from below by using Lemma \ref{lem:Qlower}, which yields
\begin{align}\label{lb2ininter}
    \bigg(\int_{x\in \Omega}\mathbb{E}(Z(x)-\mathcal{I}_{\Psi,X\cup X'}Z(x))^2 dx\bigg)^{1/2}&=\left(\mathbb{E}\|Z-\mathcal{I}_{\Psi,X\cup X'}Z\|^2_{L_2(\Omega)}\right)^{1/2}\nonumber\\
    = \|P_{\Psi,X\cup X'}\|_{L_2(\Omega)} \geq C_4 (2n)^{-\nu_0/d},
\end{align}
where the equality follows from Fubini's theorem.
Plugging \eqref{ub3ininter} and \eqref{lb2ininter} into \eqref{pl2inter}, we have
\begin{align}\label{lbpininter}
    \int_{x\in \Omega}\big(\mathbb{E}(Z(x)-\mathcal{I}_{\Psi,X\cup X'}Z(x))^2\big)^{1/2} dx \geq C_5n^{\frac{3\nu_0}{d}} n^{-\frac{4\nu_0}{d}} = C_5n^{-\frac{\nu_0}{d}}.
\end{align}
By Fubini's theorem and \eqref{lbpininter}, it can be seen that
\begin{align}\label{lbexppsmall}
    \mathbb{E} \|Z-\mathcal{I}_{\Phi,X}Z\|_{L_1(\Omega)} & = \int_{x\in \Omega}\mathbb{E}|Z(x)-\mathcal{I}_{\Phi,X}Z(x)| dx\nonumber\\
    & = \int_{x\in \Omega} \frac{2^{1/2}}{\sqrt{\pi}}\big(\mathbb{E}(Z(x)-\mathcal{I}_{\Phi,X}Z(x))^2\big)^{1/2} dx\nonumber\\
    & \geq C_6\int_{x\in \Omega}\big(\mathbb{E}(Z(x)-\mathcal{I}_{\Psi,X\cup X'}Z(x))^2\big)^{1/2} dx\nonumber\\
    & \geq C_7 n^{-\frac{\nu_0}{d}},
\end{align}
where the second equality follows from (\ref{Gaussianmoments}) with $p=1$; the first inequality is because $\mathcal{I}_{\Psi,X\cup X'}Z(x)$ is the best linear predictor of $Z(x)$.
For $1\leq p <\infty$ and any $u>0$, applying Lemma \ref{thm211} yields
\begin{align}\label{pflblpprob}
    e^{-u^2/(2C_8\sigma^2_\Omega)} & \geq \mathbb{P}\left(\|g_Z\|_{L_p(\Omega)}  < \mathbb{E} \|g_Z\|_{L_p(\Omega)} - u\right) \nonumber\\
    & \geq \mathbb{P}\left(\|g_Z\|_{L_p(\Omega)}^p  < 2^{1-p}(\mathbb{E} \|g_Z\|_{L_p(\Omega)})^p - u^p\right) \nonumber\\
    & \geq \mathbb{P}\left(\|g_Z\|_{L_p(\Omega)}^p  < 2^{1-p}(C_9\mathbb{E} \|g_Z\|_{L_1(\Omega)})^p - u^p\right) \nonumber\\
    & \geq \mathbb{P}\left(\|g_Z\|_{L_p(\Omega)}^p  < C_{10} n^{-\nu_0p/d} - u^p\right) \nonumber\\
    & \geq \mathbb{P}\left(\|g_Z\|_{L_p(\Omega)}  < C_{11} n^{-\nu_0/d} - u\right).
\end{align}
In \eqref{pflblpprob}, the second inequality is because of Jensen's inequality; the third inequality is because of the fact $\|g_Z\|_{L_p(\Omega)} \geq C_9\|g_Z\|_{L_1(\Omega)}$ for some constant $C_9>0$ depending on $p$ and $\Omega$; the fourth inequality is by \eqref{lbexppsmall}; and the last inequality is true because of the elementary inequality $(a + b)^p \geq a^p + b^p$ for $a,b >0$. Thus, we finish the proof of Theorem \ref{coro:lowLp}.

\section*{Acknowledgements}
The authors are grateful to the AE and three reviewers for very helpful comments and suggestions.

\appendix
\section{Distributions and Asymptotic Orders in Example \ref{examplerdsample}}\label{app:distribution}
\begin{proposition}\label{propexpunif}
Let $x_1,\ldots,x_n$ be mutually independent random variables following the uniform distribution on $[0,1]$. Denote their order statistics as $$0=x_{(0)}\leq x_{(1)}\leq\cdots\leq x_{(n)}\leq x_{(n+1)}=1.$$ Let $y_1,\ldots,y_n$ be mutually independent random variables following the exponential distribution with mean one. Therefore, $(x_{(1)},\ldots,x_{(n)})$ has the same distribution as
$$\left(\frac{y_1}{\sum_{j=1}^{n+1} y_j},\ldots,\frac{\sum_{j=1}^n y_j}{\sum_{j=1}^{n+1} y_j}\right). $$
\end{proposition}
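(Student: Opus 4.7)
The plan is to establish equality of the joint distributions by computing the joint density of each side and verifying that they agree on the ordered simplex $\Delta_n := \{(u_1,\ldots,u_n) : 0<u_1<\cdots<u_n<1\}$.

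First, I would recall the classical fact that the joint density of $(x_{(1)},\ldots,x_{(n)})$, the order statistics of $n$ i.i.d. Uniform$[0,1]$ random variables, equals the constant $n!$ on $\Delta_n$ and is zero elsewhere. For the right-hand side, set $S_k := \sum_{j=1}^k y_j$ for $k = 1,\ldots,n+1$, define $z_k := S_k/S_{n+1}$ for $k=1,\ldots,n$, and $s := S_{n+1}$. The goal is to show that $(z_1,\ldots,z_n)$ has joint density $n!$ on $\Delta_n$.

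The main technical step is a change of variables from $(y_1,\ldots,y_{n+1}) \in \mathbb{R}_+^{n+1}$ to $(z_1,\ldots,z_n,s) \in \Delta_n \times (0,\infty)$. The inverse map is
\begin{equation*}
y_1 = s z_1, \qquad y_k = s(z_k - z_{k-1}) \text{ for } 2\leq k\leq n, \qquad y_{n+1} = s(1-z_n),
\end{equation*}
which is a bijection. The Jacobian matrix of this inverse map has a lower-bidiagonal block in the $z$-variables with diagonal entries $s$ and subdiagonal entries $-s$, together with a final column recording the partial derivatives with respect to $s$. Expanding along the last row (or noting that after row reduction the $z$-block becomes lower triangular with diagonal entries $s$), the Jacobian determinant equals $s^n$ in absolute value.

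Since $\sum_{j=1}^{n+1} y_j = s$, the joint density of $(y_1,\ldots,y_{n+1})$ is $e^{-s}$, so transforming gives the joint density $s^n e^{-s}$ of $(z_1,\ldots,z_n,s)$ on $\Delta_n \times (0,\infty)$. This factorization shows in particular that $(z_1,\ldots,z_n)$ is independent of $s$. Integrating out $s$ using $\int_0^\infty s^n e^{-s}\, ds = n!$ yields marginal density $n!$ for $(z_1,\ldots,z_n)$ on $\Delta_n$, which coincides with the joint density of the uniform order statistics. Hence the two random vectors are equal in distribution.

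The step I expect to require the most care is the Jacobian computation — tracking signs in the bidiagonal structure and verifying that the transformation is indeed a bijection onto $\Delta_n \times (0,\infty)$. Once that is settled, the rest amounts to substitution and a single gamma integral.
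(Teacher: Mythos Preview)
Your proof is correct and follows essentially the same route as the paper: compute the joint density of the normalized partial sums of the exponentials and verify it equals $n!$ on the ordered simplex $\Delta_n$, matching the known density of the uniform order statistics. The only difference is in packaging --- the paper invokes a lemma from Resnick giving the conditional density of $(E_1,\ldots,E_n)$ given $E_{n+1}=t$ and then rescales, whereas you carry out the full change of variables $(y_1,\ldots,y_{n+1})\mapsto(z_1,\ldots,z_n,s)$ directly; your version is more self-contained and, as a bonus, exhibits the independence of $(z_1,\ldots,z_n)$ from $s=S_{n+1}$ explicitly.
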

The proof of Proposition \ref{propexpunif} relies on the following lemma.
\begin{lemma}[Lemma 4.5.1 of \cite{resnick1992adventures}]\label{lemmaasp}
Let $y_1,\ldots,y_n,y_{n+1}$ be mutually independent random variables following the exponential distribution with mean one. Define $E_k = \sum_{i=1}^k y_i$ for $k=1,...,n+1$. Then conditional on $E_{n+1} = t$, the joint density of $E_1,...,E_{n}$ is
\begin{align*}
    f_{E_1,...,E_{n}|E_{n+1}=t}(u_1,...,u_{n}) = \left\{\begin{array}{ll}
        \frac{n!}{t^{n}}, & \mbox{ if }0<u_1<\cdots<u_n<t,  \\
         0, & \mbox{ otherwise.}
    \end{array}\right.
\end{align*}
\end{lemma}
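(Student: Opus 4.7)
The plan is to derive the joint density of the ratios $(E_1/E_{n+1},\ldots,E_n/E_{n+1})$ and compare it with the well-known joint density of $(x_{(1)},\ldots,x_{(n)})$, showing both equal $n!$ on the simplex $\{0<u_1<\cdots<u_n<1\}$. Throughout I will read the statement as involving $n+1$ i.i.d.\ exponentials $y_1,\ldots,y_{n+1}$ and set $E_k=\sum_{i=1}^k y_i$.

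First, I would invoke Lemma \ref{lemmaasp} to obtain, conditional on $E_{n+1}=t$, the density
$f_{E_1,\ldots,E_n\mid E_{n+1}=t}(u_1,\ldots,u_n)=n!/t^n$ on $0<u_1<\cdots<u_n<t$. Then, for fixed $t>0$, I would apply the linear change of variables $v_k=u_k/t$ (Jacobian $t^n$) to obtain
\[
f_{E_1/t,\ldots,E_n/t\mid E_{n+1}=t}(v_1,\ldots,v_n)=n!,\qquad 0<v_1<\cdots<v_n<1.
\]
The key observation is that this conditional density does not depend on the conditioning value $t$. Hence, integrating out $E_{n+1}$ against its (Gamma) marginal, I conclude that the unconditional joint density of $(E_1/E_{n+1},\ldots,E_n/E_{n+1})$ is exactly $n!$ on the open simplex $0<v_1<\cdots<v_n<1$.

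Next, I would recall the standard formula that the order statistics of $n$ i.i.d.\ Uniform$[0,1]$ variables have joint density $n!$ on the same simplex $\{0<u_1<\cdots<u_n<1\}$ (which follows, e.g., from summing the joint density of $(x_{\pi(1)},\ldots,x_{\pi(n)})$ over all $n!$ permutations $\pi$ that map a given unordered tuple to its ordered form). Since the two random vectors have identical joint densities on $\mathbb{R}^n$, they have the same distribution, which is the desired conclusion.

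The proof is essentially a direct application of Lemma \ref{lemmaasp} followed by a trivial change of variables, so there is no real obstacle; the only subtlety worth flagging is the mild notational issue that the statement should involve $n+1$ exponentials $y_1,\ldots,y_{n+1}$ (so that $\sum_{j=1}^{n+1} y_j$ makes sense), not only $y_1,\ldots,y_n$, and one should verify that the ratios indeed land in the open simplex almost surely since $E_{n+1}>E_n>\cdots>E_1>0$ with probability one.
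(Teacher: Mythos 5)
Your proposal does not prove the statement in question. The statement is Lemma \ref{lemmaasp} itself---the claim that, conditional on $E_{n+1}=t$, the vector $(E_1,\ldots,E_n)$ has density $n!/t^n$ on $\{0<u_1<\cdots<u_n<t\}$---and your very first step is to ``invoke Lemma \ref{lemmaasp}'' to obtain exactly that density, which makes the argument circular. What you have actually written is, in substance, a proof of Proposition \ref{propexpunif} (the distributional identity between the uniform order statistics and the normalized partial sums $E_k/E_{n+1}$), and it matches the paper's proof of that proposition almost step for step; but Proposition \ref{propexpunif} is a downstream consequence of Lemma \ref{lemmaasp}, not the lemma itself. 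For the lemma, the paper offers no proof at all: it is quoted as Lemma 4.5.1 of \cite{resnick1992adventures}.

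If you want a self-contained proof of Lemma \ref{lemmaasp}, start from the joint density of $(y_1,\ldots,y_{n+1})$, namely $\exp\left(-\sum_{i=1}^{n+1}v_i\right)$ on the positive orthant, and apply the change of variables $u_k=\sum_{i=1}^{k}v_i$, a triangular linear map with unit Jacobian; this gives the joint density $e^{-u_{n+1}}$ of $(E_1,\ldots,E_{n+1})$ on $\{0<u_1<\cdots<u_{n+1}\}$. Dividing by the ${\rm Gamma}(n+1,1)$ marginal density $t^{n}e^{-t}/n!$ of $E_{n+1}$ then yields the claimed conditional density $n!/t^{n}$ on $\{0<u_1<\cdots<u_n<t\}$. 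Your closing remark about the number of exponentials is also off: the lemma as stated already involves $n+1$ variables $y_1,\ldots,y_n,y_{n+1}$, so there is no notational defect to repair there (that issue arises only in the informal display in Example \ref{examplerdsample}).
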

\begin{proof}[Proof of Proposition \ref{propexpunif}]
By Lemma \ref{lemmaasp}, it can be shown that
\begin{align*}
    f_{\frac{E_1}{E_{n+1}},...,\frac{E_{n}}{E_{n+1}}|E_{n+1}=t}(u_1,...,u_{n}) = f_{E_1,...,E_{n}|E_{n+1}=t}(u_1t,...,u_{n}t)\\
    = \left\{\begin{array}{ll}
        n!, & \mbox{ if }0<u_1<\cdots<u_{n}<1,  \\
         0, & \mbox{ otherwise.}
    \end{array}\right.
\end{align*}
which implies
\begin{align}\label{eqdisexp}
    f_{\frac{E_1}{E_{n+1}},...,\frac{E_{n}}{E_{n+1}}}(u_1,...,u_{n}) = \left\{\begin{array}{ll}
        n!, & \mbox{ if }0<u_1<...<u_{n}<1,  \\
         0, & \mbox{ otherwise,}
    \end{array}\right.
\end{align}
by taking the expectation with respect to $E_{n+1}$. Note that \eqref{eqdisexp} is the same as the joint density of order statistics $(x_{(1)},\ldots,x_{(n)})$, which completes the proof.
\end{proof}
\begin{proposition}\label{propexpfacts}
Let $y_1,\ldots,y_n$ be mutually independent random variables following the exponential distribution with mean one. Then $\max y_j \asymp \log n$, $\min y_j \asymp 1/n$, $\max y_j/\min y_j \asymp n\log n$ and $\max y_j/\sum_{j=1}^{n}y_j = O_{\mathbb{P}}(n^{-1}\log n)$.
\end{proposition}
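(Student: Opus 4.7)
The plan is to exploit the explicit distributions of the maximum, minimum, and sum of i.i.d.\ standard exponential random variables. Throughout, the symbol $\asymp$ applied to random sequences should be interpreted in the ``in probability'' sense, i.e., the ratio is bounded above and below by positive constants with probability tending to one.

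First I would establish $\max_j y_j \asymp \log n$ by direct tail bounds using the exact CDF $\mathbb{P}(\max_j y_j \leq t) = (1-e^{-t})^n$. For the upper bound, a union bound gives $\mathbb{P}(\max_j y_j > c\log n) \leq n e^{-c\log n} = n^{1-c}$, which tends to zero for any $c>1$. For the lower bound, $\mathbb{P}(\max_j y_j \leq c\log n) = (1-n^{-c})^n \leq \exp(-n^{1-c})$, which tends to zero for any $c<1$. Together these show $\max_j y_j/\log n \to 1$ in probability. For $\min_j y_j \asymp 1/n$, I would use the elementary fact that the minimum of $n$ i.i.d.\ $\mathrm{Exp}(1)$ variables is $\mathrm{Exp}(n)$-distributed, so $n \min_j y_j$ has the standard exponential distribution. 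Its CDF being $1-e^{-t}$, the random variable $n\min_j y_j$ is both tight ($\mathbb{P}(n\min_j y_j > K) = e^{-K}$) and bounded away from zero in probability ($\mathbb{P}(n\min_j y_j < \epsilon) = 1-e^{-\epsilon}$), which is exactly $\min_j y_j \asymp 1/n$.

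The third assertion is an immediate consequence: writing
\[
 \frac{\max_j y_j}{\min_j y_j} \;=\; \frac{\max_j y_j}{\log n} \cdot \frac{1}{n\min_j y_j} \cdot n\log n,
\]
both factors $\max_j y_j/\log n$ and $(n\min_j y_j)^{-1}$ are bounded away from $0$ and $\infty$ in probability by the first two steps, hence so is their product, giving $\max_j y_j/\min_j y_j \asymp n\log n$. The lack of independence between $\max$ and $\min$ is irrelevant here, since tightness from above and below of each factor automatically passes to the product. Finally, for $\max_j y_j/\sum_j y_j$, I would invoke the weak law of large numbers: $n^{-1}\sum_{j=1}^n y_j \to 1$ in probability, so $\sum_j y_j \geq n/2$ with probability tending to one. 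Combined with $\max_j y_j = O_{\mathbb{P}}(\log n)$ from the first step, this yields $\max_j y_j/\sum_j y_j \leq 2\max_j y_j/n = O_{\mathbb{P}}(n^{-1}\log n)$.

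No step presents a substantive obstacle; this is essentially a routine exercise in extreme value computations for the exponential distribution. The only minor point deserving attention is the interpretation of $\asymp$ for random sequences and the acknowledgement that non-independence of $\max$, $\min$, and $\sum$ causes no trouble because each quantity is controlled individually in probability.
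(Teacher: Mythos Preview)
Your proposal is correct and follows essentially the same approach as the paper's proof: both arguments use the explicit CDFs of $\max_j y_j$ and $\min_j y_j$ for direct tail bounds, combine these multiplicatively for the ratio, and then control $\sum_j y_j$ via a law-of-large-numbers type estimate (you invoke the WLLN, the paper uses Chebyshev's inequality directly). The only cosmetic differences are your use of the union bound in place of Bernoulli's inequality and your observation that $n\min_j y_j$ is exactly $\mathrm{Exp}(1)$, neither of which changes the substance.
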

\begin{remark}
For positive sequences of random variables $a_n$, $b_n$, we write $a_n\asymp b_n$ if $a_n = O_{\mathbb{P}}(b_n)$ and $b_n = O_{\mathbb{P}}(a_n)$.
\end{remark}
\begin{proof}[Proof of Proposition \ref{propexpfacts}]
We first show that for any $\epsilon>0$, there exists an $M$ and $N$ such that
\begin{align}
    \sup_{n>N}\mathbb{P}\left(\frac{\max y_j}{\log n}>M\right)\leq \epsilon,& \sup_{n>N}\mathbb{P}\left(\frac{\log n}{\max y_j}>M\right)\leq \epsilon,\label{inexpmaxyj}\\
    \sup_{n>N}\mathbb{P}\left(n\min y_j>M\right)\leq \epsilon,& \sup_{n>N}\mathbb{P}\left(\frac{1}{n\min y_j}>M\right)\leq \epsilon.\label{inexpminyj}
\end{align}
For any $x>0$, it can be checked that
\begin{align*}
    \mathbb{P}(\max y_j \leq x) = (1 - e^{-x})^n,
\end{align*}
which, for $n>N$, by  Bernoulli's inequality and the basic inequality $\log(1+x)<x$, implies
\begin{align*}
    \mathbb{P}(\max y_j \geq M\log n) = & 1 - (1 - n^{-M})^n\leq n^{-M+1}\leq N^{-M+1}\rightarrow 0,\\ \mathbb{P}\left(\max y_j \leq \frac{\log n}{M}\right) = & (1 - n^{-\frac{1}{M}})^n = e^{n\log(1 - n^{-\frac{1}{M}})}\leq e^{-n^{1-\frac{1}{M}}}\leq e^{-N^{1-\frac{1}{M}}}\rightarrow 0,
\end{align*}
as $N,M\rightarrow \infty$. This finishes the proof of \eqref{inexpmaxyj}.

For any $x>0$, $\min y_j$ has the cumulative distribution function
\begin{align*}
    \mathbb{P}(\min y_j \leq x) = 1 - (e^{-x})^n=1 - e^{-nx}.
\end{align*}
Therefore, we have
\begin{align*}
   \mathbb{P}\bigg(\min y_j \geq \frac{M}{n}\bigg) = e^{-M}, \mbox{ and } \mathbb{P}\bigg(\min y_j \leq \frac{1}{nM}\bigg) = 1- e^{-\frac{1}{M}} \rightarrow 0,
\end{align*}
as $M\rightarrow \infty$, which finishes the proof of \eqref{inexpminyj}. Note \eqref{inexpmaxyj} and \eqref{inexpminyj} imply $\max y_j \asymp \log n$ and $\min y_j \asymp 1/n$, respectively. Because for positive sequences $a_n,b_n,c_n$, $a_n=O_{\mathbb{P}}(b_n)$ and $b_n=O_{\mathbb{P}}(c_n)$ implies $a_n =O_{\mathbb{P}}(c_n)$, we have $\max y_j/\min y_j \asymp n\log n$.

Next we show $\max y_j/\sum_{j=1}^{n}y_j = O_{\mathbb{P}}(n^{-1}\log n)$. Because we have shown that $\max y_j \asymp \log n$, it suffices to show $n = O_{\mathbb{P}}(\sum_{j=1}^{n}y_j)$, which is equivalent to show that for any $\epsilon>0$, there exists an $M$ and $N$ such that
\begin{align}
    \sup_{n>N}\mathbb{P}\left(\sum_{j=1}^{n}y_j< \frac{n}{M}\right) \leq \epsilon. \label{inexpmaxsumyj}
\end{align}
By Chebyshev's inequality, for $n>N$
\begin{align*}
& \mathbb{P}\left(\sum_{j=1}^{n}y_j< \frac{n}{M}\right)  =
    \mathbb{P}\left(\frac{1}{n}\sum_{j=1}^{n}y_j - 1< \frac{1}{M} - 1\right)\\
    & \leq \mathbb{P}\left(\bigg|\frac{1}{n}\sum_{j=1}^{n}y_j - 1\bigg|>\bigg|\frac{1}{M} - 1\bigg|\right) \leq \frac{1}{n(1-1/M)^2}\leq \frac{1}{N(1-1/M)^2} \rightarrow 0,
\end{align*}
as $N,M\rightarrow \infty$. This shows $n = O_{\mathbb{P}}(\sum_{j=1}^{n}y_j)$, and finishes the proof.
\end{proof}

\bibliography{robustness}

\end{document}